\documentclass[11pt,reqno]{amsart}
\usepackage{hyperref}
\usepackage{bm,amsfonts,mathrsfs,bbm,latexsym,rawfonts,amsmath,amssymb,amsthm,epsfig}
\usepackage{fullpage, setspace, color}
\usepackage{mathrsfs}
\allowdisplaybreaks
\usepackage{todonotes}
\usepackage[numbers,sort&compress]{natbib}

\newtheorem{theorem}{Theorem}[section]
\newtheorem{lemma}[theorem]{Lemma}

\newtheorem{definition}[theorem]{Definition}
\newtheorem{remark}[theorem]{Remark}
\newtheorem{proposition}[theorem]{Proposition}
\numberwithin{equation}{section}

\newcommand{\al}{\alpha}

\newcommand{\De}{\Delta}
\newcommand{\ep}{\varepsilon}

\newcommand{\Om}{\Omega}
\newcommand{\ga}{\gamma}

\renewcommand{\P}{\mathcal{P}}

\def\<{\left\langle} \def\>{\right\rangle}

\newcommand{\n}{\nabla}

\newcommand{\p}{\partial}

\hypersetup{
  colorlinks=true,
  linkcolor=blue,
  citecolor=blue,
  filecolor=blue,
  urlcolor=blue
}

\graphicspath{{./figures/}}

\newcommand{\norm}[1]{\lVert#1\rVert}

\keywords{Dzyaloshinskii-Moriya interaction, Landau-Lifshitz equation with helical derivatives,  chiral boundary condition, global weak solutions}

\subjclass[2020]{35A01, 35G61, 35Q60}

\begin{document}
\begin{sloppypar}
    
\title{Global existence of weak solutions for Landau-Lifshitz equation with helical derivatives}
\author{Bo Chen}
\address{School of Mathematics, South China University of Technology, Guangzhou, 510640, People's Republic of China}
\email{cbmath@scut.edu.cn}

\author{ Zhen Qiu}
\address{School of Mathematics and Information Science, Guangzhou University}
\email{qiuzhen97@foxmail.com}

\begin{abstract}
In this paper, we investigate the chiral boundary value problem for the Landau-Lifshitz equation with helical derivatives. By introducing Sobolev spaces adapted to the helical derivative and establishing energy estimates that are compatible with the chiral boundary condition, we prove the global existence of weak solutions to this problem, both in the presence and in the absence of damping terms.
\end{abstract}

\maketitle

\section{Introduction}

\subsection{Micromagnetic energy with Dzyaloshinskii-Moriya interaction (DMI)}
In physics, Dzyaloshinsky \cite{DZYALOSHINSKY1958241} and Moriya \cite{PhysRev.120.91} independently established the microscopic theory of antisymmetric exchange interactions in magnetic systems lacking inversion symmetry. Dzyaloshinsky's thermodynamic theory \cite{DZYALOSHINSKY1958241} explained the emergence of ``weak'' ferromagnetism in antiferromagnetic materials, such as $\alpha$-Fe$_2$O$_3$, attributing it to relativistic spin-lattice interactions and the magnetic dipole interactions within the crystal lattice. Meanwhile, Moriya \cite{PhysRev.120.91} provided a quantum-mechanical formulation based on spin-orbit coupling, demonstrating that antisymmetric exchange arises from superexchange processes in non-centrosymmetric crystals, where inversion symmetry is broken. This interaction, now known as the Dzyaloshinskii-Moriya interaction (DMI), plays a crucial role in stabilizing chiral spin textures and provides the theoretical foundation for the description of chiral magnetic skyrmions. The magnetic skyrmions,  vortex-like configurations, have been observed in several materials such as MnSi, FeGe, and various multilayer systems. 

Mathematically, the DMI is explained by the DMI energy density, which can be expressed as 
$$\bm{m} \cdot (\nabla \times \bm{m}),$$
where $\bm{m}$ is the normalized magnetization defined in a domain in $\mathbb{R}^3$. This term breaks chiral symmetry and favors non-collinear spin arrangements, such as helices and skyrmions. In continuum models, the full micromagnetic energy often takes the following form.

Let $\Omega$ be a bounded domain in $\mathbb{R}^3$, and $\mathbb{S}^2$ denote the unit sphere in $\mathbb{R}^3$. For a magnetization field $\bm{m} :\Omega\times [0,T)\to \mathbb{S}^2$, the general micromagnetic energy incorporating Dzyaloshinskii-Moriya  interaction (DMI) is defined by (also see \cite{MR4077401})
\begin{equation}\label{functional1}
	\mathcal{E}[\bm{m}] := \mathcal{E}_{\text{ex}}[\bm{m}] + \mathcal{E}_{\text{DMI}}[\bm{m}] + \mathcal{E}_{\text{lo}}[\bm{m}] + \mathcal{E}_{\text{appl}}[\bm{m},\bm{f}],
\end{equation}
where the individual energy contributions on the right-hand side are specified as follows.

\medskip
\noindent
\emph{Exchange energy.}
The exchange energy is given by
$$\mathcal{E}_{\text{ex}}[\bm{m}]:=\frac{\ell_{\text{ex}}^{2}}{2} \int_{\Omega} |\nabla \bm{m}|^{2} \mathrm{d}x,$$
where $\ell_{\text{ex}} > 0$ denotes the length of exchange.

\medskip
\noindent
\emph{Dzyaloshinskii-Moriya interaction.}
The DMI energy, with interaction strength $\kappa \in \mathbb{R}$, is defined by
\[
\mathcal{E}_{\text{DMI}}[\bm{m}]
:= \kappa \int_{\Omega} \bm{m} \cdot( \nabla \times \bm{m}) \,\mathrm{d}x.
\]

\medskip
\noindent
\emph{Lower-order energy.}
The lower-order energy functional takes the form
\[
\mathcal{E}_{\text{lo}}[\bm{m}]
:= -\frac{1}{2} \int_{\Omega} \bm{m} \cdot \bm{\pi}(\bm{m}) \,\mathrm{d}x,
\]
where the linear operator
\[\bm{\pi} : L^{2}(\Omega;\mathbb{R}^{3}) \to L^{2}(\Omega;\mathbb{R}^{3})\]
is required to be bounded and self-adjoint. This term models various lower-order effects, including magnetic anisotropy and stray-field interactions. More precisely, the magnetic anisotropy energy density $\Phi(\bm{m})$ is typically a homogeneous quadratic function of magnetization $\bm{m}$. For uniaxial materials with an easy axis aligned with the $x$-axis, one has
$$\Phi(\bm{m})=\bm{m}_2^2+\bm{m}_3^2.$$
The stray-field induced by $\bm{m}$ is defined as 
\[h_d(\bm{m})(x):=\int_{\Omega}\nabla N (x-y)\cdot \bm{m}(y)dy,\]
where $N(x) = -\frac{1}{4\pi|x|}$ is the Newtonian potential. Accordingly, the anisotropy energy and the stray-field energy can be written in the form
\[\frac{a}{2}\int_{\Omega} \Phi(\bm{m}) \,\mathrm{d}x+\frac{b}{2}\int_{\Omega} \bm{m} \cdot h_d(\bm{m}) \,\mathrm{d}x,\]
where $a,b$ are fixed material-dependent constants.

\medskip
\noindent
\emph{Applied-field energy.}
The applied-field energy is defined by
\[
\mathcal{E}_{\text{appl}}[\bm{m}]
:= -\int_{\Omega} \bm{m} \cdot \bm{f} \,\mathrm{d}x,
\]
which represents the interaction energy with a prescribed external field
$\bm{f} \in L^{2}(\Omega\times [0,T);\mathbb{R}^{3})$. A notable special case of $\mathcal{E}_{\text{lo}}[\bm{m}]+\mathcal{E}_{\text{appl}}[\bm{m}]$  is the external Zeeman energy:
\begin{align*}
	\mathcal{E}_{Z}[\bm{m}]:=&\frac{h}{2}\int_{\Omega}|\bm{m} -\bm{e}_3|^2  \mathrm{d}x\\
	=&h\int_{\Omega}(1-\bm{m}_3 )\mathrm{d}x 
        =h\int_{\Omega} \bm{m} \cdot (\bm{m}-\bm{e}_3) \mathrm{d}x,
\end{align*}
where $\bm{e}_3=(0,0,1)$ and $h$ is a positive constant.

\medskip
Associated with micromagnetic energy \eqref{functional1}, the Landau-Lifshitz-Gilbert (LLG) equation with a transport term reads
\begin{equation}\label{LLG-MDI}
	\partial_{t} \bm{m} + \gamma \bm{v} \cdot \nabla \bm{m}=
	-\alpha \bm{m} \times \bm{H} (\bm{m})-\beta \bm{m} \times \left(\bm{m} \times \bm{H}(\bm{m})\right),
\end{equation}
where $\bm{v}$ is a given time-dependent vector field, the effective magnetic field is given by
\begin{equation*}
	\bm{H}(\bm{m}):= -\frac{\partial\mathcal{E}[\bm{m}]}{\partial \bm{m}}= \ell_{\text{ex}}^{2} \Delta \bm{m} - 2\kappa \nabla \times \bm{m} + \bm{\pi}(\bm{m}) + \bm{f}.
\end{equation*}
Here, $\alpha \geq 0$ is the gyroscopic ratio and $\beta \geq 0$ denotes the Gilbert damping coefficient.

A prototypical special case of \eqref{LLG-MDI} is the classical Landau-Lifshitz (LL) equation 
\begin{align*}
	\partial_{t} \bm{m} = - \bm{m} \times \Delta \bm{m}, \quad  \text{in } \Omega \times [0,T), 
\end{align*}
which is a fundamental phenomenological model in the theory of ferromagnetism, describing the precessional dynamics of magnetization fields in ferromagnetic materials. It was first introduced by Landau and Lifshitz \cite{landau1935} in 1935, and later extended by Gilbert \cite{Gilbert1955,1353448} through the inclusion of a dissipative term, leading to the Landau-Lifshitz-Gilbert (LLG) equation
\begin{align*}
	\partial_{t} \bm{m} = - \alpha \bm{m} \times \Delta \bm{m}- \beta \bm{m} \times (\bm{m} \times \Delta \bm{m} ),\quad  \text{in } \Omega \times [0,T).
\end{align*}

Another important special case of \eqref{LLG-MDI} is the incompressible Schr\"odinger flow arising in hydrodynamic applications:
\begin{equation*}
	\partial_t\bm{m} + \gamma \bm{v}\cdot\nabla \bm{m} = -\bm{m} \times \Delta \bm{m},
\end{equation*}
where $\bm{v}$ is a divergence-free vector field. This system was derived by Chern et al.~~\cite{Chern2016} as a model for purely Eulerian simulation of incompressible fluids and has further applications in modeling magnetic fluids where quantum effects become significant.

\subsection{Related works} 
Due to their widespread applications in physics, the mathematical analysis of the LL-type equations has attracted considerable attention over the past several decades.

Zhou and Guo \cite{Zhou1984} established the existence of weak solutions to the boundary value problem of the LLG equation by means of the viscosity elimination method combined with a Leray-Schauder fixed point argument. Later, Visintin  \cite{Visintin1985} extended these results to models including magnetostrictive effects. In the dissipation-free case (i.e., the Landau-Lifshitz equation), Sulem, Sulem, and Bardos \cite{Sulem1986} employed difference methods to prove the global existence of weak solutions on $\mathbb{R}^n$. Subsequently, Wang \cite{W98} extended the global existence result of weak solutions to the Schr\"odinger flow from a closed Riemannian manifold or a bounded domain in $\mathbb{R}^n$ into $\mathbb{S}^2$, by using the complex structure approximation method. Further developments concerning weak solutions to a class of generalized flows related to the micromagnetic energy can be found in \cite{CW21,Chen2024Incompressible} and references therein. On the other hand, Alouges and Soyeur \cite{Alouges1992} demonstrated a non-uniqueness result for weak solutions to the LLG equation with Neumann boundary conditions on the unit ball in $\mathbb{R}^3$.

Next, we briefly recall the well-posedness of regular solutions for the LLG equation on domains with boundary. The local existence of regular solutions to the LLG equation on bounded domains in $\mathbb{R}^n$ ($n \leq 3$) was investigated by Carbon and Fabrie \cite{Carbon2001}. These results were later extended by Carbou and Jizzini \cite{Carbon2018} to models involving electric currents, with a detailed analysis of the associated boundary compatibility conditions. More recently, the local existence of regular solutions for the LLG equation with spin-polarized transport, as well as for the Schr\"{o}dinger flow with the damping term for maps from a 3-dimensional manifold with boundary into compact symplectic manifolds under the Neumann boundary conditions, was obtained in \cite{MR4619012, MR4686699}. The challenging problem of local existence and uniqueness of regular solutions for the initial-Neumann boundary value problem to the LL equation (i.e., the LLG equation with $\beta=0$) was recently resolved by the first author of the present paper and Wang \cite{Chen2023Regular, CW4, CW5}. By analyzing the interplay between the geometric structure of the LL equation and the transport effects induced by the vector field $\bm{v}$, the authors \cite{Chen2024Incompressible}  further established the local existence of regular solutions for the initial-Neumann boundary value problem of incompressible Schr\"{o}dinger flow on bounded domains in $\mathbb{R}^3$.

Beyond the study of existence theory, some progress has been made in understanding singularity formation of the LLG equation.  Ding and Wang \cite{DW07} obtained
the existence of a smooth finite time blow-up solution for LLG equations defined on a closed Riemmanian manifold  with or without boundary in dimensions  $n=3,4$. Harpes \cite{Har04} studied the behavior of finite time blow-up solutions of the 2-dimensional  LLG flow defined on surfaces. Recently,  Li and Song \cite{LS25} investigated finite-time singularities of 2-dimensional LLG equation for almost-holomorphic maps, and established a refined bubble tree convergence result. Precise  examples of finite-time blow-up solutions to 1-equivariant LLG equation from $\mathbb{R}^2$ into $\mathbb{S}^2$ were constructed by Xu and Zhao \cite{XZ25}. For the non-equivariant case, Wei, Zhang and Zhou \cite{WZZ22} constructed finite time blow-up solutions to the LLG equation from $\mathbb{R}^2$ into $\mathbb{S}^2$. They proved that there exist regular initial data such that the solution blows up precisely at these points at finite time, taking around each point the profile of a sharply scaled degree 1 harmonic map with the type II blow-up speed.  For the more  challenging case of the LLG equation without dissipation, Merle, Raphael and Rodnianski \cite{Merle2013} constructed finite-time blow-up solutions near harmonic maps for the 1-equivariant Schr\"odinger flow. Moreover, traveling wave solutions with vortex structures for Schr\"{o}dinger map flows were obtained by Lin and Wei \cite{Lin2010}, and later extended by Wei and Yang \cite{Wei2016}.

Considerable attention has also been devoted to the LLG equation with DMI. D\"oring and Melcher in \cite{MR3639614} and Shimizu in \cite{Shimizu} discussed the local well-posedness for this equation on $\mathbb{R}^2$.  The long-time existence and stability of dynamical solutions to the one-dimensional  LLG equation with DMI, subject to a class of sufficiently weak external driving fields, were established in \cite{MR4616656}. The global existence of smooth solutions with small initial data on the two-dimensional flat torus and on $\mathbb{R}^2$ was proved in \cite{MR4730566}. Recently, \citet{MR4746009} obtained the global existence of smooth solutions in one dimension for large initial data, and also established the global existence of a weak solution with homogeneous Neumann boundary on bounded domains in $\mathbb{R}^2$ or $\mathbb{R}^3$ using a penalized approximation approach. 

From a physical perspective, when the magnetization field $\bm{m}$ is subject to DMI, one is naturally led to the chiral boundary value problem for the LLG equation:
\begin{equation}\label{eq:llg}
	\begin{cases}
		\partial_{t} \bm{m} + \gamma \bm{v} \cdot \nabla \bm{m}
		=-\alpha \bm{m} \times \bm{H} (\bm{m})-\beta \bm{m} \times \left(\bm{m} \times \bm{H}(\bm{m})\right),\quad\quad&\text{in } \Omega \times [0,T),\\[1ex]
		\bm{m}(\cdot,0) = \bm{m}_{0},  \quad &\text{in } \Omega,
	\end{cases}
\end{equation}
together with the chiral boundary condition (also called the helical boundary condition):
\begin{align}\label{chiralBC}
    \ell_{\text{ex}} \partial_{\bm{n}}  \bm{m} + \frac{\kappa}{\ell_{\text{ex}} }  \bm{m} \times \bm{n}= 0, \quad \text{on } \partial \Omega \times [0,T).
\end{align}
Here, $\Omega$ is a smooth bounded domain in $\mathbb{R}^m$ with $m=2,3$, and $\bm{n}$ denotes the unit outward normal vector on $\partial \Omega$. Using ideas from topological fluid dynamics and the theory of liquid crystals, Fratta, Innerberger and Praetorius~\cite{MR4077401} proved a weak-strong uniqueness result for the problem \eqref{eq:llg}-\eqref{chiralBC} in the case of vanishing $\bm{v}$. Subsequently, Fratta et~al. \cite{DFP} analyzed the thin-film limit of the micromagnetic energy functional with DMI and characterized the asymptotic behavior of the LLG equation~\eqref{eq:llg}.

To the best of our knowledge, the literature contains only limited results concerning the existence of weak or regular solutions to the chiral boundary value problem \eqref{eq:llg}-\eqref{chiralBC}. This provides the main motivation for the present work, in which we establish the global existence of weak solutions to this problem. We also plan to investigate the well-posedness of regular solutions in future work.

\subsection{The main results}
In this part, we present our main results on the global existence of weak solutions to the problem \eqref{eq:llg}-\eqref{chiralBC}.

\subsubsection{\bf{LLG equation with DMI in the framework of helical derivatives}} For the purpose of PDE analysis, it is convenient to rewrite the LLG equation with DMI in the framework of helical derivatives. For simplicity, we set $\ell_{\text{ex}}=1$ and $\kappa=1$. Let $\bm{u} \in H^1(\Omega)$. The partial helical derivative of $\bm{u}$ is defined by
\begin{align}\label{partialD}
    \partial_i^{\mathfrak{h}}\bm{u} :=\partial_i\bm{u}-\bm{e}_i \times \bm{u},
\end{align}
where $\bm{e}_i \in \mathbb{R}^3$ denotes the $i$-th canonical basis vector for $i = 1,2,3$. Consequently, the gradient and the Laplacian of $\bm{u}$ with respect to helical derivatives are defined by  
\begin{align}\label{partailL}
	\nabla^{\mathfrak{h}}\bm{u} := (\partial_1^{\mathfrak{h}}\bm{u}, \partial_2^{\mathfrak{h}}\bm{u}, \partial_3^{\mathfrak{h}}\bm{u}), 
	\quad 
	\Delta^{\mathfrak{h}}\bm{u} := \sum_{i=1}^3 \partial_i^{\mathfrak{h}}\partial_i^{\mathfrak{h}}\bm{u},
\end{align}
respectively. 

Therefore,  the micromagnetic energy \eqref{functional1} admits the following equivalent representation (see the formula \eqref{eq:helical_energy_decomposition}):
\begin{equation}\label{helical_total_energy}
	\mathcal{E}^{\mathfrak{h}}[\bm{m}] := \frac{1}{2}\int_{\Omega} |\nabla^{\mathfrak{h}} \bm{m}|^{2} \mathrm{d}x - \frac{1}{2} \int_{\Omega} \bm{m} \cdot \bm{\pi}(\bm{m})\, \mathrm{d}x - \int_{\Omega} \bm{m} \cdot \bm{f}\, \mathrm{d}x.
\end{equation}
Since the domain $\Omega$ is bounded, this energy functional differs from \eqref{functional1} only by a finite constant.

By using the  identities in Proposition \ref{lemma2.1}, the chiral boundary value problem \eqref{eq:llg}-\eqref{chiralBC} is equivalent to the following natural boundary problem of LLG equation with helical derivatives:
\begin{equation}\label{system1}
	\begin{cases}
		D^{\mathfrak{h}}_t\bm{m}=-\alpha \bm{m} \times \bm{H}(\bm{m})-\beta \bm{m} \times \left(\bm{m} \times \bm{H}(\bm{m})\right), \quad & \text{in } \Omega \times [0,T), \\[1ex]
		\nabla^{\mathfrak{h}}  \bm{m} \cdot\bm{n} = 0, \quad & \text{on } \partial \Omega \times [0,T),\\[1ex]
		\bm{m}(\cdot,0) = \bm{m}_{0},  \quad &\text{in } \Omega,
	\end{cases}
\end{equation}
where the effective field in the helical formulation is given by 
\begin{align}\label{heff}
	\bm{H}(\bm{m})= 
	\Delta^{\mathfrak{h}}\bm{m} + \bm{\pi}(\bm{m}) + \bm{f},
\end{align}
and the transport time derivative of $\bm{m}$ is defined by 
\[D^{\mathfrak{h}}_t\bm{m}:=\partial_{t} \bm{m} + \gamma (\bm{v} \cdot \nabla^{\mathfrak{h}}\bm{m}+\bm{v} \times \bm{m}).\]

Since $\bm{m}\times$ can be interpreted as a complex structure $J(\bm{m})=\bm{m}\times: T_{\bm{m}}\mathbb{S}^2\to T_{\bm{m}}\mathbb{S}^2$,  the problem \eqref{system1} can be rewritten as the following version:
\begin{equation}\label{system2}
	\begin{cases}
		\alpha D^{\mathfrak{h}}_t\bm{m}-\beta\bm{m}\times D^{\mathfrak{h}}_t\bm{m}=-(\alpha^2+\beta^2) \bm{m} \times \bm{H}(\bm{m}), \quad & \text{in } \Omega \times [0,T), \\[1ex]
		\nabla^{\mathfrak{h}}  \bm{m} \cdot\bm{n} = 0, \quad & \text{on } \partial \Omega \times [0,T),\\[1ex]
		\bm{m}(\cdot,0) = \bm{m}_{0},  \quad &\text{in } \Omega.
	\end{cases}
\end{equation}

\subsubsection{\bf{Main results on LLG equation with helical derivatives}} In this part, we start with the definition of weak solutions to the problem \eqref{system2}.

\begin{definition}\label{definition0}
Let $\Omega$ be a bounded domain in $\mathbb{R}^3$ and  assume $ \bm{m}_0 \in H^1(\Omega; \mathbb{S}^2)$. A map $\bm{m}$ is called a global type-I weak solution to the problem \eqref{system1}, if for any $T<\infty$, $\bm{m} \in H^1([0,T]\times\Omega; \mathbb{S}^2)$ satisfies 
		\begin{align*}
			&\alpha\int_0^T \int_{\Omega}  D^{\mathfrak{h}}_t\bm{m}\cdot\bm{\phi}{\rm d } x {\rm d }t-\beta\int_0^T \int_{\Omega}  \bm{m}\times D^{\mathfrak{h}}_t\bm{m}\cdot\bm{\phi}{\rm d } x {\rm d }t\\
			=&(\alpha^2+\beta^2)\int_0^T \int_{\Omega}(\bm{m}\times \nabla^\mathfrak{h} \bm{m})\cdot\nabla^\mathfrak{h} \bm{\phi} {\rm d}x{\rm d}t
			-(\alpha^2+\beta^2) \int_0^T \int_{\Omega} (\bm{m}\times (\bm{\pi}(\bm{m})+ \bm{f}))\cdot  \bm{\phi} {\rm d}x{\rm d}t,
		\end{align*}
		for all $\bm{\phi} \in H^1([0,T]\times \Omega)$. Here, $\bm{m}(x,t)\rightarrow \bm{m}_0$ as $t \rightarrow 0$ in $C^0([0,T]; L^2(\Omega))$.
		
\end{definition}
Our first result concerning  the LLG equation with the helical derivatives can be stated as follows.
\begin{theorem}\label{theroem1}
	Let $\Omega\subset \mathbb{R}^3$ be a bounded smooth domain. Assume that
	$\bm{m}_0 \in H^1(\Omega; \mathbb{S}^2), 
	\bm{f}\in L^2_{loc}([0,\infty);L^2(\Omega))$ is locally Lipschitz, and $\bm{v}\in L^2_{loc}([0,\infty); L^\infty(\Omega))$.
	In addition, suppose that there exists a universal constant $C_{\bm{\pi}}$ such that 
	\begin{align}\label{Cpi1}
		\|\bm{\pi}(\bm{u})\|_{L^{2}(\Omega)} \leq C_{\bm{\pi}} \|\bm{u}\|_{L^{2}(\Omega)}, \quad \forall  \bm{u} \in L^{2}({\Omega} ;\mathbb{R}^{3}).
	\end{align}
	Then problem \eqref{system2} admits a global type-I weak solution $\bm{m}$ in the sense of Definition \ref{definition0}, which satisfies
	\begin{align*}
		\bm{m}\in L^\infty([0,T];H^1(\Omega;\mathbb{S}^2)),\quad \partial_{t}\bm{m}\in L^2([0,T]\times \Omega) 
	\end{align*}
	for any $T<\infty$.
\end{theorem}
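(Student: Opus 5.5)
We prove Theorem \ref{theroem1} by approximation, uniform energy estimates and passage to the limit. We focus on the damped case $\beta>0$, where the Gilbert term supplies the dissipation that bounds $\partial_t\bm{m}$. We approximate by the Ginzburg--Landau penalized system
\[
\alpha D^{\mathfrak{h}}_t\bm{m}_\ep-\beta\bm{m}_\ep\times D^{\mathfrak{h}}_t\bm{m}_\ep=-(\alpha^2+\beta^2)\bm{m}_\ep\times \bm{H}_\ep(\bm{m}_\ep)+\mu\,\bm{H}_\ep(\bm{m}_\ep),
\]
with
\[
\bm{H}_\ep(\bm{u})=\Delta^{\mathfrak{h}}\bm{u}+\bm{\pi}(\bm{u})+\bm{f}+\frac{1}{\ep^2}(1-|\bm{u}|^2)\bm{u},
\]
$\mu>0$ a fixed constant, and boundary condition $\nabla^{\mathfrak{h}}\bm{m}_\ep\cdot\bm{n}=0$. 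We realize this system by a Galerkin scheme. The natural basis consists of eigenfunctions of the helical Neumann operator $-\Delta^{\mathfrak{h}}$. By the integration by parts formula of Proposition \ref{lemma2.1}, this operator is self-adjoint and nonnegative with compact resolvent on $L^2$, with form domain $H^1$ and the form $\int|\nabla^{\mathfrak{h}}\bm{u}|^2$. Its eigenfunctions therefore automatically encode the chiral boundary condition \eqref{chiralBC}. Solving the resulting ODE system gives approximate solutions $\bm{m}_{\ep,N}$.

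\emph{Energy estimate.} We test the approximate equation with $\partial_t\bm{m}$ and use the weak form of the helical Neumann condition, $\int\Delta^{\mathfrak{h}}\bm{u}\cdot\bm{w}=-\int\nabla^{\mathfrak{h}}\bm{u}:\nabla^{\mathfrak{h}}\bm{w}$. This yields the identity
\[
\frac{d}{dt}\mathcal{E}^{\mathfrak{h}}_\ep[\bm{m}]+c\|\partial_t\bm{m}\|_{L^2}^2\le \text{(error terms)},
\]
where $\mathcal{E}^{\mathfrak{h}}_\ep$ is \eqref{helical_total_energy} plus $\frac{1}{4\ep^2}\int(1-|\bm{m}|^2)^2$. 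The error terms are of three kinds:
\begin{itemize}
\item terms from $\partial_t\bm{f}$, handled by $\frac{d}{dt}\int\bm{m}\cdot\bm{f}$ and the local Lipschitz assumption on $\bm{f}$;
\item terms from $\bm{\pi}$, controlled by \eqref{Cpi1};
\item the transport terms $\gamma(\bm{v}\cdot\nabla^{\mathfrak{h}}\bm{m}+\bm{v}\times\bm{m})$, bounded by $\|\bm{v}\|_{L^\infty}\big(\|\nabla^{\mathfrak{h}}\bm{m}\|_{L^2}+\|\bm{m}\|_{L^2}\big)\|\partial_t\bm{m}\|_{L^2}$ and absorbed with Young's inequality.
\end{itemize}
We also need control of $\|\bm{m}\|_{L^2}$. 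It follows from the penalty term, or from the pointwise identity $\partial_t|\bm{m}|^2$ in the limit. Gronwall's inequality with weight $\|\bm{v}(t)\|_{L^\infty}^2\in L^1_{loc}$ then gives, uniformly in $N$ and $\ep$,
\[
\sup_t\|\nabla^{\mathfrak{h}}\bm{m}\|_{L^2}^2+\int_0^T\|\partial_t\bm{m}\|_{L^2}^2+\ep^{-2}\sup_t\|1-|\bm{m}|^2\|_{L^2}^2\le C(T).
\]
Since $|\nabla\bm{u}|\le|\nabla^{\mathfrak{h}}\bm{u}|+C|\bm{u}|$, this is an $H^1$ bound. The same testing procedure applied to the mixed operator on the left-hand side needs care. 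The left-hand side, tested with $\partial_t\bm{m}$, produces $\alpha|\partial_t\bm{m}|^2$ plus cross terms with $\bm{v}$. The combination with $\bm{m}\times$ is handled by first inverting to the form \eqref{system1}.

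\emph{Limit passage.} We let $N\to\infty$ and then $\ep\to0$. By Aubin--Lions, $\bm{m}_\ep\to\bm{m}$ strongly in $L^2([0,T];L^2)$ and in $C^0([0,T];L^2)$, weakly-$*$ in $L^\infty H^1$, with $\partial_t\bm{m}_\ep\rightharpoonup\partial_t\bm{m}$ in $L^2$. The penalty bound gives $|\bm{m}|=1$ a.e. The nonlinear term $\bm{m}\times\Delta^{\mathfrak{h}}\bm{m}$ is treated in divergence form: for a test function $\bm{\phi}$,
\[
\int\bm{m}\times\Delta^{\mathfrak{h}}\bm{m}\cdot\bm{\phi}=\int(\bm{m}\times\nabla^{\mathfrak{h}}\bm{m})\cdot\nabla^{\mathfrak{h}}\bm{\phi}.
\]
This uses $\partial_i\bm{m}\times\partial_i\bm{m}=0$ together with the algebraic cancellation of the $\bm{e}_i\times$ parts. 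The right-hand side is a strong$\times$weak product and passes to the limit. The main obstacle is that the penalty term $\mu\bm{H}_\ep$ does not pass to the limit directly. To handle it, we take the cross product of the approximate equation with $\bm{m}_\ep$ before passing to the limit, which removes the radial penalty $\ep^{-2}(1-|\bm{m}|^2)\bm{m}$. We then recover the stated weak formulation using $|\bm{m}|=1$ and the identity $\bm{m}\times(\bm{m}\times\bm{a})=-\bm{a}$ for $\bm{a}\perp\bm{m}$; this identity applies to the limits of $D^{\mathfrak{h}}_t\bm{m}$ and $\bm{m}\times\bm{H}$.

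Alternatively, in the spirit of Alouges--Soyeur, one can take $\mu=0$ and let $\ep\to0$ directly in the cross-product form. We would try this first, since it avoids verifying that the $\mu$-contribution vanishes. The time regularity $\partial_t\bm{m}\in L^2$ and $\bm{m}\in L^\infty H^1$ follow from lower semicontinuity of the uniform bounds, and continuity at $t=0$ follows from the $C^0L^2$ convergence.
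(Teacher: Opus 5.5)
\textbf{The limit does not solve \eqref{system2}.} Crossing your approximate equation with $\bm{m}_\varepsilon$ removes only the radial part $\varepsilon^{-2}(1-|\bm{m}_\varepsilon|^2)\bm{m}_\varepsilon$ of $\bm{H}_\varepsilon$. The term $\mu\,\bm{m}_\varepsilon\times\bm{H}_\varepsilon(\bm{m}_\varepsilon)=\mu\,\bm{m}_\varepsilon\times(\Delta^{\mathfrak{h}}\bm{m}_\varepsilon+\bm{\pi}(\bm{m}_\varepsilon)+\bm{f})$ is of order one. In the same divergence form you use for the exchange term, it converges to $\mu\,\bm{m}\times\bm{H}(\bm{m})$, which is not zero. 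Undoing the cross product with $|\bm{m}|=1$ and $\bm{m}\cdot D^{\mathfrak{h}}_t\bm{m}=0$, your limit satisfies
\[
\alpha D^{\mathfrak{h}}_t\bm{m}-\beta\,\bm{m}\times D^{\mathfrak{h}}_t\bm{m}=-(\alpha^2+\beta^2)\,\bm{m}\times\bm{H}(\bm{m})-\mu\,\bm{m}\times\bigl(\bm{m}\times\bm{H}(\bm{m})\bigr).
\]
This is an LLG equation with an extra damping of size $\mu$, so with $\mu>0$ fixed you construct a solution of the wrong problem. Your fallback $\mu=0$ does not help. Because the penalty is parallel to $\bm{m}$, $\bm{m}\times\bm{H}_\varepsilon(\bm{m})=\bm{m}\times\bm{H}(\bm{m})$. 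So the $\mu=0$ system does not depend on $\varepsilon$ at all: it is \eqref{system2} itself, posed for non-unit maps, and it regularizes nothing.

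\textbf{The energy step has the same defect.} Testing with $\partial_t\bm{m}$ leaves $-(\alpha^2+\beta^2)\int(\bm{m}\times\bm{H}_\varepsilon)\cdot\partial_t\bm{m}$, a top-order term with no sign. The inversion to \eqref{system1} you invoke holds only for $|\bm{m}|=1$ and $\mu=0$, which a Galerkin approximation does not preserve. A pointwise inversion of $\alpha I-\beta\,\bm{m}\times$ also does not commute with $\mathcal{P}_n$. For $\mu>0$ the inverted system is in fact dissipative, but its coercivity in the direction of $\bm{m}$ comes precisely from the $\mu$-term that spoils the limit.

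\textbf{How to repair it.} A penalization works when the right-hand side is a pure multiple of $\bm{H}_\varepsilon$, as in Alouges--Soyeur:
\[
\beta D^{\mathfrak{h}}_t\bm{m}_\varepsilon+\alpha\,\bm{m}_\varepsilon\times D^{\mathfrak{h}}_t\bm{m}_\varepsilon=(\alpha^2+\beta^2)\bm{H}_\varepsilon(\bm{m}_\varepsilon).
\]
For unit maps this is \eqref{system2} with $\bm{m}\times$ applied, up to a term parallel to $\bm{m}$. Testing with $\partial_t\bm{m}_\varepsilon$ then gives the energy law directly for $\beta>0$. Crossing with $\bm{m}_\varepsilon$ and letting $\varepsilon\to0$ returns exactly \eqref{system2}. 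The transport term still needs care, since $|\bm{m}_\varepsilon|$ is not bounded in $L^\infty$.

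The paper avoids penalization altogether. It replaces $\bm{m}\times$ by $\mathcal{J}(\bm{m})\times$ with $\mathcal{J}(\bm{m})=\bm{m}/\max\{|\bm{m}|,1\}$, adds $\varepsilon\Delta^{\mathfrak{h}}\bm{m}$, and tests with $-\Delta^{\mathfrak{h}}\bm{m}$. A maximum principle gives $|\bm{m}^\varepsilon|\le1$, and the $L^2$ identity then gives $|\bm{m}|=1$. That $L^\infty$ bound is also what lets the paper pass to the limit in $\bm{m}^\varepsilon\times(\bm{m}^\varepsilon\times\Delta^{\mathfrak{h}}\bm{m}^\varepsilon)$.
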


\begin{remark}
\begin{itemize}
\item[$(1)$] The type-I weak solution $\bm{m}$ given in Theorem \ref{theroem1} satisfies the energy inequality \eqref{energy-ineq}. In the special case where $\ga=0$, we have
		\begin{align*}
			\mathcal{E}^{\mathfrak{h}}[\bm{m}(T)]+\int_0^T\int_{\Omega}\bm{m}\cdot \partial_t \bm{f}{\rm d}x
			{\rm d}t+\frac{\beta }{\alpha^2+ \beta^2} \int_0^T\int_{\Omega}\left|\partial_t\bm{m}\right|^2{\rm d}x\leq \mathcal{E}^{\mathfrak{h}}[\bm{m}_0].
		\end{align*}
	\item[$(2)$] Fratta et~al. \cite*{MR4077401} considered the LLG equation \eqref{system2} with a vanishing vector field $\bm{v}$, namely,
	\begin{align*}
		\partial_{t} \bm{m}-\tilde{\alpha}\bm{m} \times \partial_{t} \bm{m}=-\bm{m} \times \bm{H}(\bm{m}), \quad {\rm in } \:\: \Omega \times [0,T),
	\end{align*}
	where we choose $\al=\frac{1}{\tilde{\alpha}^2+1}$ and $\beta=\frac{\tilde{\alpha}}{\tilde{\alpha}^2+1}$ in \eqref{system2}. They established a weak-strong uniqueness property: any type-I weak solution in the sense of Definition~\ref{definition0} coincides with a $C^3$-smooth solution, provided that the operator $\bm{\pi}$ satisfies the following additional regularity assumption: 
	\begin{equation}\label{regularityoff}
		\|\bm{\pi}(\bm{u})\|_{L^{\infty}([0,T]; C^{1}(\overline{\Omega}))} < \infty,\quad \forall  \bm{u} \in C^{3}(\overline{\Omega} \times [0,T]; \mathbb{R}^{3}).
	\end{equation}
	Consequently,  Theorem~\ref{theroem1} confirms the global existence of such weak solutions.
\end{itemize}
\end{remark}

We can also obtain the global weak solutions with lower regularity than those obtained in Theorem~\ref{theroem1} for problem~\eqref{system1}, provided that the vector field $\bm{v}$ satisfies an alternative regularity assumption \eqref{(a)}:
\begin{equation}\label{(a)}
	\bm{v} \in  L^2_{loc} ([0,\infty); L^3(\Omega)\cap W^{1,1}(\Omega)),\quad  \nabla \bm{v } =-( \nabla \bm{v })^T\,\,\text{in} \,\, \Omega\times [0, \infty),\tag{a}
\end{equation}
together with the boundary condition $\bm{v}\cdot \bm{n}|_{\partial \Omega \times [0,\infty)}=0$. Here, we view $\nabla \bm{v}$ as a $3\times 3$-matrix, and $( \nabla \bm{v })^T$ denotes its transpose.

Under the assumption \eqref{(a)}, we introduce the notion of a weaker solution to problem~\eqref{system2} as follows.
\begin{definition}\label{definition}
	Let $\Omega$ be a bounded domain in $\mathbb{R}^3$. Assume $ \bm{m}_0 \in H^1(\Omega;\mathbb{S}^2)$, a map $\bm{m}$ is called a global type-II weak solution to the problem \eqref{system1} if, for any $T<\infty$, we have
	\begin{align*}
		\bm{m} \in L^2 ([0,T];H^1(\Omega;\mathbb{S}^2))\cap C^0([0,T];L^2(\Omega;\mathbb{S}^2)), 
	\end{align*}
	and $\bm{m}$ satisfies 
	\begin{align*}
		&\int_{\Omega} (\bm{m} \cdot \bm{\phi })(T){\rm d \: }x - \int_{\Omega} (\bm{m} \cdot \bm{\phi} )(0)\: {\rm d }x -\int_0^T \int_{\Omega}  \bm{m} \cdot \frac{\partial \bm{\phi}}{\partial t }\:{\rm d } x {\rm d }t\\
		=&-\gamma\int_0^T \int_{\Omega}(\bm{v} \cdot \nabla^{\mathfrak{h}} \bm{m}+\bm{v} \times \bm{m})\cdot  \bm{\phi} \:{\rm d}x{\rm d}t+\alpha \int_0^T \int_{\Omega}(\bm{m}\times \nabla^\mathfrak{h} \bm{m})\cdot\nabla^\mathfrak{h} \bm{\phi} \:{\rm d}x{\rm d}t\\
		&-\beta \int_0^T \int_{\Omega} 
		\nabla^\mathfrak{h}  \bm{m}\cdot \nabla^\mathfrak{h}\bm{\phi} 
		\:{\rm d}x{\rm d}t+\beta \int_0^T \int_{\Omega} 
		|\nabla^\mathfrak{h}\bm{m}|^2 \bm{m}\cdot \bm{\phi} 
		\:{\rm d}x{\rm d}t\\
		&-\alpha \int_0^T \int_{\Omega} (\bm{m}\times (\bm{\pi}(\bm{m})+ \bm{f}))\cdot  \bm{\phi} \:{\rm d}x{\rm d}t
        -\beta \int_0^T \int_{\Omega} (\bm{m} \times  (\bm{m} \times (\bm{\pi }(\bm{m})+\bm{f})))\cdot\bm{\phi} \:{\rm d}x{\rm d}t
	\end{align*}
	for all $\bm{\phi} \in H^1(\Omega \times [0,T])\cap L^\infty(\Omega \times [0,T])$. Here, $\bm{m}(x,t)\rightarrow \bm{m}_0$ as $t \rightarrow 0$ in $C^0([0,T]; L^2(\Omega))$.
\end{definition}

Then we have the following result.
\begin{theorem}\label{theroem1'}
	Let $\Omega\subset \mathbb{R}^3$ be a bounded smooth domain. Assume that
	$\bm{m}_0 \in H^1(\Omega; \mathbb{S}^2), 
	\bm{f}\in L^2_{loc}([0,\infty);L^2(\Omega))$ is locally Lipschitz and that $\bm{v}$ satisfies the condition \eqref{(a)}.
	In addition, suppose that there exists a universal constant $C_{\bm{\pi}}$ such that 
	\begin{align}\label{Cpi}
		\|\bm{\pi}(\bm{u})\|_{L^{2}(\Omega)} \leq C_{\bm{\pi}} \|\bm{u}\|_{L^{2}(\Omega)}, \quad \forall  \bm{u} \in L^{2}({\Omega} ).
	\end{align}
	Then problem \eqref{system1} admits a global type-II weak solution $\bm{m}$ in the sense of Definition \ref{definition}, which satisfies
	\begin{align*}
		\bm{m}\in L^\infty([0,T];H^1(\Omega;\mathbb{S}^2)),\quad \partial_{t}\bm{m}\in L^2([0,T]; H^{-1}(\Omega)) 
	\end{align*}
	for any $T<\infty$.
\end{theorem}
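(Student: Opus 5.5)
We construct the solution by a Ginzburg--Landau penalization combined with a Galerkin scheme adapted to the helical Neumann operator, and then pass to the limit using compactness. For $\varepsilon>0$, the approximate problem is
\begin{align*}
\partial_t\bm{m}_\ep+\gamma(\bm{v}\cdot\nabla^{\mathfrak{h}}\bm{m}_\ep+\bm{v}\times\bm{m}_\ep)=-\alpha\,\bm{m}_\ep\times\bm{H}(\bm{m}_\ep)+\beta\Big(\bm{H}(\bm{m}_\ep)-\tfrac{1}{\ep^2}(|\bm{m}_\ep|^2-1)\bm{m}_\ep\Big),
\end{align*}
with the natural boundary condition $\nabla^{\mathfrak{h}}\bm{m}_\ep\cdot\bm{n}=0$ and data $\bm{m}_0$. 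In the damping-free case $\beta=0$ we add an artificial viscosity $\delta\bm{H}$ and let $\delta\to0$ at the end. The operator $-\Delta^{\mathfrak{h}}$ with this boundary condition is the Friedrichs operator of the form $\int|\nabla^{\mathfrak{h}}\bm{u}|^2$ on $H^1$. That form is coercive modulo $L^2$, since $|\nabla^{\mathfrak{h}}\bm{u}|^2\ge\frac12|\nabla\bm{u}|^2-C|\bm{u}|^2$, so it has a discrete eigenbasis. We solve the ODE system in its span and obtain $\bm{m}_\ep$ by the standard Galerkin limit.

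\textbf{Uniform estimates.} We test the equation with $-\big(\bm{H}(\bm{m}_\ep)-\ep^{-2}(|\bm{m}_\ep|^2-1)\bm{m}_\ep\big)$, integrate by parts using the natural boundary condition, and use the identities of Proposition~\ref{lemma2.1}. The $\alpha$-term is orthogonal to this test function. This gives
\[
\frac{d}{dt}\Big(\mathcal{E}^{\mathfrak{h}}[\bm{m}_\ep]+\tfrac{1}{4\ep^2}\!\int(|\bm{m}_\ep|^2-1)^2\Big)+\beta\|\bm{H}_\ep\|_{L^2}^2\le \Big|\int\bm{m}_\ep\cdot\partial_t\bm{f}\Big|+\text{(transport term)}.
\]
The transport term is the main obstacle, because under \eqref{(a)} $\bm{v}$ is only in $L^3$ and no longer in $L^\infty$. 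We rewrite $\int(\bm{v}\cdot\nabla^{\mathfrak{h}}\bm{m})\cdot\Delta^{\mathfrak{h}}\bm{m}$ by integrating by parts. The top-order piece $\int v_j\partial^{\mathfrak{h}}_j\partial^{\mathfrak{h}}_i\bm{m}\cdot\partial^{\mathfrak{h}}_i\bm{m}=\frac12\int\bm{v}\cdot\nabla|\nabla^{\mathfrak{h}}\bm{m}|^2$ vanishes, because $\operatorname{div}\bm{v}=0$, which follows from the antisymmetry of $\nabla\bm{v}$, and $\bm{v}\cdot\bm{n}=0$. The commutator $[\partial^{\mathfrak{h}}_i,\partial^{\mathfrak{h}}_j]$ is zeroth order. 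The term $\partial_i v_j\,\partial^{\mathfrak{h}}_j\bm{m}\cdot\partial^{\mathfrak{h}}_i\bm{m}$ vanishes by antisymmetry. The zeroth-order cross terms involving $\bm{v}\times\bm{m}$ are handled together with the rotational part of $\nabla\bm{v}$. The boundary terms vanish by $\nabla^{\mathfrak{h}}\bm{m}\cdot\bm{n}=0$ and $\bm{v}\cdot\bm{n}=0$. What remains is bounded by $C(\|\bm{v}\|_{L^3}+\|\nabla\bm{v}\|_{L^1}+1)(\|\bm{m}\|_{H^1}^2+\ep^{-2}\|\,|\bm{m}|^2-1\|_{L^2}^2)$. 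The penalty term is treated the same way, since $\bm{v}\cdot\nabla^{\mathfrak{h}}\bm{m}\cdot\bm{m}=\frac12\bm{v}\cdot\nabla|\bm{m}|^2$.

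Gronwall's inequality, with $\bm{v}\in L^2_{loc}$ in time, \eqref{Cpi} and the Lipschitz property of $\bm{f}$, then yields $\bm{m}_\ep$ bounded in $L^\infty_tH^1$ uniformly in $\ep$, together with $\ep^{-2}\!\int(|\bm{m}_\ep|^2-1)^2$ bounded. From the equation, using $\bm{v}\in L^2_tL^3$, $\nabla^{\mathfrak{h}}\bm{m}\in L^\infty_tL^2$ and $\bm{m}\in L^6$, we obtain $\partial_t\bm{m}_\ep$ bounded in $L^2_tH^{-1}$, or in $L^1_t(H^1)^*$ first if necessary. Aubin--Lions then gives strong convergence $\bm{m}_\ep\to\bm{m}$ in $C^0_tL^2$, weak-$*$ convergence in $L^\infty_tH^1$, and $|\bm{m}|=1$.

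\textbf{Passage to the limit.} The damping part cannot be passed to the limit directly, because $|\nabla^{\mathfrak{h}}\bm{m}_\ep|^2$ converges only weakly. We therefore take the cross product of the approximate equation with $\bm{m}_\ep$, which removes the penalty term. The exchange contribution becomes $\bm{m}_\ep\times\Delta^{\mathfrak{h}}\bm{m}_\ep$, which in weak form is $-\int(\bm{m}_\ep\times\nabla^{\mathfrak{h}}\bm{m}_\ep)\cdot\nabla^{\mathfrak{h}}\bm{\phi}$ up to lower-order terms, since $\partial^{\mathfrak{h}}_i\bm{m}\times\partial^{\mathfrak{h}}_i\bm{m}=0$. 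This is a product of a strongly and a weakly converging sequence, so it passes to the limit. The same holds for the $\bm{v}$- and $\bm{\pi}$-terms, using the strong $L^2$ convergence and the boundedness of $\bm{\pi}$. In the limit we obtain $\bm{m}\times\partial_t\bm{m}+\dots=-\alpha\,\bm{m}\times(\bm{m}\times\bm{H})+\beta\,\bm{m}\times\bm{H}$ in the weak sense. Since $|\bm{m}|=1$, so that $\bm{m}\cdot\partial_t\bm{m}=0$ and $\bm{m}\cdot(\bm{v}\cdot\nabla^{\mathfrak{h}}\bm{m}+\bm{v}\times\bm{m})=0$, we take the cross product with $\bm{m}$ once more, using test functions of the form $\bm{m}\times\bm{\phi}$ with $\bm{\phi}\in H^1\cap L^\infty$, which lie in $L^2_tH^1$. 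This recovers the formulation of Definition~\ref{definition}. The identity $-\bm{m}\times(\bm{m}\times\Delta^{\mathfrak{h}}\bm{m})=\Delta^{\mathfrak{h}}\bm{m}+|\nabla^{\mathfrak{h}}\bm{m}|^2\bm{m}$ produces the $\beta$-terms shown there. Finally, the initial condition follows from the $C^0_tL^2$ convergence.
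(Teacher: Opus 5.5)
Your route is genuinely different from the paper's. You use Ginzburg--Landau penalization of the Landau--Lifshitz form and test against the full penalized field $\tilde{\bm{H}}_\varepsilon:=\bm{H}(\bm{m}_\varepsilon)-\varepsilon^{-2}(|\bm{m}_\varepsilon|^2-1)\bm{m}_\varepsilon$. The paper uses the truncated complex structure $\mathcal{J}$, viscosity $\varepsilon\Delta^{\mathfrak{h}}$, the maximum principle $|\bm{m}^\varepsilon|\le 1$, and tests only with $-\Delta^{\mathfrak{h}}\bm{m}^\varepsilon$. Your decisive step, the passage to the limit, has a genuine gap. Crossing the equation with $\bm{m}_\varepsilon$ kills the penalty and turns the damping part into the divergence-form term $\beta\,\bm{m}_\varepsilon\times\bm{H}(\bm{m}_\varepsilon)$. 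But it also turns the precession part into $-\alpha\,\bm{m}_\varepsilon\times(\bm{m}_\varepsilon\times\bm{H}(\bm{m}_\varepsilon))$, and
\[
\bm{m}_\varepsilon\times(\bm{m}_\varepsilon\times\Delta^{\mathfrak{h}}\bm{m}_\varepsilon)=\Big(\tfrac12\Delta|\bm{m}_\varepsilon|^2-|\nabla^{\mathfrak{h}}\bm{m}_\varepsilon|^2\Big)\bm{m}_\varepsilon-|\bm{m}_\varepsilon|^2\Delta^{\mathfrak{h}}\bm{m}_\varepsilon
\]
contains exactly the quadratic quantity you said cannot be passed to the limit. You have moved the difficulty from the $\beta$-term to the $\alpha$-term, and ``in the limit we obtain $\dots=-\alpha\,\bm{m}\times(\bm{m}\times\bm{H})+\beta\,\bm{m}\times\bm{H}$'' is asserted, not proved. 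The term $\bm{m}_\varepsilon\times\partial_t\bm{m}_\varepsilon$ has the same defect. With $\partial_t\bm{m}_\varepsilon$ bounded only in $L^2_tH^{-1}$ (or $L^1_t(H^1)^*$), it is the pairing of $\partial_t\bm{m}_\varepsilon$ with $\bm{\phi}\times\bm{m}_\varepsilon$, which converges only weakly in $H^1$. This is a weak--weak product and does not converge. For the same reason your aside on $\beta=0$ cannot go through the crossing argument: with viscosity $\delta\tilde{\bm{H}}$ the only control is $\delta\|\tilde{\bm{H}}_\varepsilon\|^2_{L^2L^2}\le C$. The paper treats $\beta=0$ separately, under different hypotheses, in Theorem~\ref{theorem2}.

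What is missing is a way to use the damping to control the double cross product. In the paper this is the uniform bound $\beta\int_0^T\!\int_\Omega|\bm{m}^\varepsilon\times\Delta^{\mathfrak{h}}\bm{m}^\varepsilon|^2\le\mathfrak{C}_3$. One writes the term as $-\int(\bm{m}^\varepsilon\times\Delta^{\mathfrak{h}}\bm{m}^\varepsilon)\cdot(\bm{m}^\varepsilon\times\bm{\phi})$ and replaces the outer $\bm{m}^\varepsilon$ by the strong limit $\bm{m}$, at a cost of $\mathfrak{C}_3\|\bm{\phi}\|_{L^\infty}\|\bm{m}^\varepsilon-\bm{m}\|_{L^2}$. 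One then integrates by parts against the fixed function $\bm{m}\times\bm{\phi}$. Throughout, $\partial_t\bm{m}^\varepsilon$ enters only linearly.

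Your scheme can be repaired along these lines for $\beta>0$:
\begin{itemize}
\item $\tilde{\bm{H}}_\varepsilon$ is bounded in $L^2_tL^2_x$.
\item Since the penalty is parallel to $\bm{m}_\varepsilon$, $\bm{m}_\varepsilon\times(\bm{m}_\varepsilon\times\bm{H}(\bm{m}_\varepsilon))=\bm{m}_\varepsilon\times(\bm{m}_\varepsilon\times\tilde{\bm{H}}_\varepsilon)$.
\item The weak limit of $\bm{m}_\varepsilon\times\tilde{\bm{H}}_\varepsilon$ must be identified with $\sum_i\partial_i^{\mathfrak{h}}(\bm{m}\times\partial_i^{\mathfrak{h}}\bm{m})+\bm{m}\times(\bm{\pi}(\bm{m})+\bm{f})$, and then tested, by density, against $\bm{m}\times\bm{\phi}$.
\end{itemize}
It is simpler still not to cross at all: pass to the limit in $\beta\tilde{\bm{H}}_\varepsilon\rightharpoonup\beta\bm{h}$ and identify $\bm{h}$ through $\bm{m}\times\bm{h}$ and $\bm{m}\cdot\bm{h}=0$.

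Your penalized problem has no maximum principle, because the term $\beta\,\bm{m}_\varepsilon\cdot(\bm{\pi}(\bm{m}_\varepsilon)+\bm{f})$ in the equation for $|\bm{m}_\varepsilon|^2$ has no sign. So all these pairings must be done in $L^{3/2}$--$L^3$ duality with strong $L^2_tL^3_x$ convergence. The paper's truncation $\mathcal{J}$ gives $|\bm{m}^\varepsilon|\le 1$ and avoids this.

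Finally, your energy identity also contains $\gamma\int(\bm{v}\cdot\nabla^{\mathfrak{h}}\bm{m}_\varepsilon+\bm{v}\times\bm{m}_\varepsilon)\cdot(\bm{\pi}(\bm{m}_\varepsilon)+\bm{f})$. Your sketch omits this term, and it is not even integrable if $\bm{v}$ is only in $L^3$. It is harmless only because, contrary to your remark, condition \eqref{(a)} forces $\nabla\bm{v}(\cdot,t)$ to be constant in $x$: antisymmetry gives $\partial_k\partial_iv_j=0$. Hence $\bm{v}(\cdot,t)$ is a rigid motion and $\|\bm{v}\|_{L^\infty}\le C(\Omega)\|\bm{v}\|_{L^3}$. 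The same fact is needed for $\partial_t\bm{m}_\varepsilon$ to be bounded in $L^2_tL^{3/2}_x$, which the time-derivative term requires. The paper sidesteps this term entirely by testing only with $-\Delta^{\mathfrak{h}}\bm{m}^\varepsilon$.
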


\begin{remark}
There exists exactly a vector field satisfying 
	$$\nabla \bm{v } =-( \nabla \bm{v })^T.$$
A classical example is the solid body rotation flow $\bm{v}$ (see \cite{1990Elementary}), which satisfies the steady Euler equation in the form
	\begin{align*}
		\bm{v} =\bm{\omega} \times \bm{x}, \quad p = \frac{\rho}{2} |\bm{\omega}\times\bm{x}|^2 + \text{const},
	\end{align*}
where $ \bm{\omega}$ denotes a constant angular velocity, $\rho$ is the density of flow, $p$ is the pressure function, and $ \bm{x}=(x_1,x_2,x_3)$. It is worth pointing out that the general time-dependent rigid body flow
	\[\bm{v}(\bm{x},t)=\bm{a}(t)+\bm{\omega}(t)\times \bm{x}\]
also meets the condition $\nabla \bm{v } =-( \nabla \bm{v })^T$ at each fixed time $t$. 
	
\end{remark}

\subsubsection{\bf{The incompressible Schr\"odinger flow with helical derivatives}} 

Our final theorem concerns the global existence of solutions to the chiral boundary value problem for the incompressible Schr\"odinger flow (i.e., the LLG equation \eqref{LLG-MDI} without dissipative term) with helical derivatives:
\begin{equation}\label{eq:IS}
	\begin{cases}
		D^{\mathfrak{h}}_{t} \bm{m}
		= -\alpha \bm{m} \times \bm{H}(\bm{m}), \quad & \text{in } \Omega \times [0,T), \\[1ex]
		\nabla^{\mathfrak{h}}  \bm{m} \cdot\bm{n} = 0, \quad & \text{on } \partial \Omega \times [0,T),\\[1ex]
		\bm{m}(\cdot,0) = \bm{m}_{0},  \quad &\text{in } \Omega.
	\end{cases}
\end{equation}

Assume that the vector field $\bm{v}$ satisfies either condition  \eqref{(b)}:
\begin{align}\label{(b)}
	\bm{v} \in  L^2_{loc}([0,\infty); H^1(\Omega)), \:  \:\bm{v}\cdot \bm{n}|_{\partial \Omega \times  [0, \infty)}=0, \: \:\nabla \bm{v } =-( \nabla \bm{v })^T\,\,\text{in} \,\, \Omega\times [0, \infty),
	\tag{b}
\end{align} 
or condition \eqref{(c)}:
\begin{align}\label{(c)}
	\bm{v} \in  L^2_{loc}([0,\infty); L^3(\Omega)), \:  \nabla \bm{v} \in  L^1_{loc} ([0,\infty); L^\infty(\Omega)), \: \bm{v}\cdot \bm{n}|_{\partial \Omega \times  [0, \infty)}=0, \: {\rm div} \bm{v}=0 \,\,\text{in} \,\, \Omega\times [0, \infty).
	\tag{c}
\end{align} 

Under these assumptions, we obtain the following global existence result.
\begin{theorem}\label{theorem2}
	Let $\Omega\subset \mathbb{R}^3$ be a bounded smooth domain. Assume that the initial data 
	$\bm{m}_0 \in H^1(\Omega; \mathbb{S}^2), 
	\bm{f}\in L^2_{loc}([0,\infty); H^1(\Omega))$ is locally Lipschitz, and that the vector field $\bm{v}$ satisfies condition \eqref{(b)} or \eqref{(c)}. Moreover, suppose that there exists a universal constant $C_{\bm{\pi}} > 0$ such that
	\begin{equation}\label{C-pi1}
		\|\bm{\bm{\pi}}(\bm{u})\|_{H^{1}(\Omega)} \leq C_\pi \|\bm{u}\|_{H^{1}(\Omega)}, \quad \forall  \bm{u} \in L^{2}({\Omega}).
	\end{equation}
	Then problem \eqref{eq:IS} admits a global type-II weak solution $\bm{m}$ in the sense of Definition \ref{definition} with $\beta=0$, satisfying
	\begin{align*}
		\bm{m}\in L^\infty([0,T];H^1(\Omega;\mathbb{S}^2)),\quad \partial_{t}\bm{m}\in L^2([0,T]; H^{-1}(\Omega)) 
	\end{align*}
	for any $T<\infty$.
\end{theorem}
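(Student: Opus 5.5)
We plan to obtain the solution of \eqref{eq:IS} as a vanishing-damping limit of the dissipative problem. For $\beta\in(0,1]$ we take the type-II weak solutions $\bm{m}^\beta$ of \eqref{system1} given by Theorem~\ref{theroem1'}. Under \eqref{(b)} the assumption \eqref{(a)} holds because $H^1\subset L^3\cap W^{1,1}$ on a bounded domain. Under \eqref{(c)} we instead use the Galerkin/penalization scheme behind Theorems~\ref{theroem1} and \ref{theroem1'}, with $\beta>0$ fixed, so that smooth approximate solutions are available for the estimates. The task is then to derive bounds on $\bm{m}^\beta$ that are uniform in $\beta$ and to pass to the limit $\beta\to0$ in the weak formulation of Definition~\ref{definition}.

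\textbf{Uniform estimates.} At the approximate level we test the equation with $\bm{H}(\bm{m})$. This produces
\[
\frac{d}{dt}\mathcal{E}^{\mathfrak{h}}[\bm{m}]+\beta\|\bm{m}\times\bm{H}\|_{L^2}^2
=\gamma\int_\Omega(\bm{v}\cdot\nabla^{\mathfrak{h}}\bm{m}+\bm{v}\times\bm{m})\cdot\bm{H}\,{\rm d}x-\int_\Omega\bm{m}\cdot\partial_t\bm{f}\,{\rm d}x .
\]
With $\beta$ small the dissipation is useless, so the transport term must be bounded by $C(t)\|\nabla^{\mathfrak{h}}\bm{m}\|_{L^2}^2$ without using $\Delta^{\mathfrak{h}}\bm{m}$. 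For the part $\bm{v}\cdot\nabla^{\mathfrak{h}}\bm{m}\cdot\Delta^{\mathfrak{h}}\bm{m}$ we integrate by parts. The boundary terms vanish by $\nabla^{\mathfrak{h}}\bm{m}\cdot\bm{n}=0$ and $\bm{v}\cdot\bm{n}=0$. The remaining terms are of two kinds.
\begin{itemize}
\item[(i)] The term $\int v_j\partial^{\mathfrak{h}}_i\partial^{\mathfrak{h}}_j\bm{m}\cdot\partial^{\mathfrak{h}}_i\bm{m}$. Commuting helical derivatives via Proposition~\ref{lemma2.1} gives $[\partial_i^{\mathfrak{h}},\partial_j^{\mathfrak{h}}]\bm{m}$, which is zeroth order in $\nabla\bm{m}$ and involves $\bm{e}_i\times\bm{e}_j\times\bm{m}$. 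The term then reduces to $\frac12\int\bm{v}\cdot\nabla|\nabla^{\mathfrak{h}}\bm{m}|^2=-\frac12\int{\rm div}\,\bm{v}\,|\nabla^{\mathfrak{h}}\bm{m}|^2$, which vanishes because ${\rm div}\,\bm{v}=0$ in both cases; under \eqref{(b)} antisymmetry of $\nabla\bm{v}$ gives ${\rm div}\,\bm{v}=0$.
\item[(ii)] The term $\int\partial_iv_j\,\partial_j^{\mathfrak{h}}\bm{m}\cdot\partial_i^{\mathfrak{h}}\bm{m}$. Under \eqref{(b)} it vanishes, since $\nabla\bm{v}$ is antisymmetric and the form is symmetric. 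Under \eqref{(c)} it is bounded by $\|\nabla\bm{v}\|_{L^\infty}\|\nabla^{\mathfrak{h}}\bm{m}\|_{L^2}^2$, and $\|\nabla\bm{v}\|_{L^\infty}\in L^1_t$.
\end{itemize}
The lower-order pieces $\bm{v}\times\bm{m}$ and $\bm{v}\cdot\nabla^{\mathfrak{h}}\bm{m}$ paired with $\bm{\pi}(\bm{m})+\bm{f}$ are handled as follows. We again integrate $\bm{v}\times\bm{m}\cdot\Delta^{\mathfrak{h}}\bm{m}$ by parts and bound the result with $\|\bm{v}\|_{L^3}$ (or $\|\bm{v}\|_{H^1}$), $|\bm{m}|=1$, Hölder and Young. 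The term with $\bm{\pi}(\bm{m})+\bm{f}$ is handled by moving derivatives onto $\bm{\pi}(\bm{m})+\bm{f}$, which is why \eqref{C-pi1} and $\bm{f}\in L^2_tH^1$ are assumed. Gronwall then gives $\sup_t\|\nabla\bm{m}^\beta\|_{L^2}\le C(T)$ independent of $\beta$, using the equivalence of $\|\nabla^{\mathfrak{h}}\bm{m}\|_{L^2}$ and $\|\nabla\bm{m}\|_{L^2}$ up to $|\Omega|$. The estimate passes to the weak limit of the approximations by lower semicontinuity.

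\textbf{Time derivative and compactness.} From Definition~\ref{definition} we have $\partial_t\bm{m}^\beta=\operatorname{div}(\bm{m}\times\nabla^{\mathfrak{h}}\bm{m})+\beta(\cdots)+\text{l.o.t.}$. The $\beta$-terms $\beta\Delta^{\mathfrak{h}}\bm{m}+\beta|\nabla^{\mathfrak{h}}\bm{m}|^2\bm{m}$ are bounded in $L^2_tH^{-1}+L^\infty_tL^1$, and the transport term $\bm{v}\cdot\nabla^{\mathfrak{h}}\bm{m}$ lies in $L^2_tL^{6/5}\subset L^2_tH^{-1}$. Hence $\partial_t\bm{m}^\beta$ is bounded in $L^2_t(H^{-1}+(W^{1,\infty})^*)$ uniformly in $\beta$. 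The Aubin--Lions lemma gives strong convergence $\bm{m}^\beta\to\bm{m}$ in $C^0([0,T];L^2)$, with $\nabla\bm{m}^\beta\rightharpoonup\nabla\bm{m}$ weakly-$*$ in $L^\infty_tL^2$, and consequently $|\bm{m}|=1$.

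\textbf{Passage to the limit.} The nonlinear term $(\bm{m}\times\nabla^{\mathfrak{h}}\bm{m})\cdot\nabla^{\mathfrak{h}}\bm{\phi}$ converges, because it is the product of a strongly convergent and a weakly convergent factor with bounded test function. The $\beta$-terms tend to zero: they are $\beta$ times uniformly bounded quantities, and $\bm{\phi}\in L^\infty$ handles $|\nabla^{\mathfrak{h}}\bm{m}|^2\bm{m}\cdot\bm{\phi}$. The terms $\bm{v}\cdot\nabla^{\mathfrak{h}}\bm{m}\cdot\bm{\phi}$ and $\bm{m}\times\bm{\pi}(\bm{m})$ converge by the strong $L^2$ convergence together with the continuity of $\bm{\pi}$ on $L^2$. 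The main obstacle is the uniform energy estimate, namely the exact cancellation of the second-order transport contribution, including the commutator and boundary terms generated by the helical structure. If that step leaves a bad boundary term, we would first try rewriting $\bm{v}\cdot\nabla^{\mathfrak{h}}\bm{m}$ through the rigid-motion structure of $\bm{v}$ under \eqref{(b)}, and under \eqref{(c)} try the estimate in the Galerkin basis adapted to the helical Neumann operator.
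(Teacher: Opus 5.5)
Your argument is correct, but it is organized differently from the paper's proof.

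\textbf{The limiting procedure.} The paper never passes through the damped problem. It sets $\beta=0$ directly in the viscous scheme \eqref{II-galerkin2}, uses $|\bm m^\varepsilon|\le 1$, tests with $-\Delta^{\mathfrak{h}}\bm m^\varepsilon$, and takes a single limit $\varepsilon\to0$. You instead construct the $\beta>0$ solutions and send $\beta\to0$, proving the uniform bound at the $\varepsilon$-level by testing with $\bm H$. Your estimate never uses the $\beta$-dissipation, so this outer limit is a detour. The same estimate with $\beta=0$ closes in one compactness step. It would also spare you from redoing the construction of Theorem~\ref{theroem1'} under \eqref{(c)}, a case that theorem does not cover. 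What your detour buys is the interpretation of the solution as a vanishing-damping limit of LLG solutions.

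\textbf{The test function.} This is a genuine, if small, difference. Testing with $\bm H$ kills $\alpha(\bm m\times\bm H)\cdot\bm H$ identically. The $H^1$ regularity of $\bm\pi(\bm m)+\bm f$ then enters only through its pairing with the transport term $\gamma(\bm v\cdot\nabla^{\mathfrak{h}}\bm m+\bm v\times\bm m)$. The paper's test instead leaves $\alpha\int(\bm m\times(\bm\pi(\bm m)+\bm f))\cdot\Delta^{\mathfrak{h}}\bm m$, which must be integrated by parts using \eqref{C-pi1} and $\bm f\in L^2_tH^1$. In exchange, your identity contains $\int\bm m\cdot\partial_t\bm f$, so you rely on the time-Lipschitz hypothesis on $\bm f$. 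The paper's test uses no time regularity of $\bm f$.

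\textbf{The transport term.} Your treatment of $\int(\bm v\cdot\nabla^{\mathfrak{h}}\bm m)\cdot\Delta^{\mathfrak{h}}\bm m$ is exactly the paper's computation. It uses the commutator \eqref{eq:commutor_estimate}, ${\rm div}\,\bm v=0$, $\bm v\cdot\bm n=0$, and the symmetric matrix $(\partial_i^{\mathfrak{h}}\bm m\cdot\partial_k^{\mathfrak{h}}\bm m)$ paired against an antisymmetric or bounded $\nabla\bm v$.

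\textbf{One small correction.} Integrating $\int(\bm v\times\bm m)\cdot\Delta^{\mathfrak{h}}\bm m$ by parts gives $-\int(\nabla^{\mathfrak{h}}\bm v\times\bm m)\cdot\nabla^{\mathfrak{h}}\bm m$. So you need $\nabla\bm v\in L^1_tL^2$, not only a bound on $\|\bm v\|_{L^3}$. This is available under both \eqref{(b)} and \eqref{(c)}, so the argument is unaffected.
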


\begin{remark}
	It is worth pointing out that the above global existence results (i.e., Theorem \ref{theroem1}, Theorem \ref{theroem1'} and Theorem \ref{theorem2}) are also valid for the domain
	$\Omega \subset \mathbb{R}^{2}$. 
	
	Indeed, in this case one may regard $\Omega$ as a planar domain embedded in $\mathbb{R}^{3}$ and define the gradient operator by 
	\[\nabla = \left(\dfrac{\partial}{\partial x_1}, \dfrac{\partial}{\partial x_2},0\right).\]
	With this convention, the same arguments as those used in the proofs of Theorems~\ref{theroem1}, \ref{theroem1'}, and~\ref{theorem2} apply verbatim, thereby yielding the corresponding two-dimensional results.
\end{remark}

\subsubsection{\bf{Outline of proof}} 

These three theorems are proved by employing the complex structure approximation method originally introduced in \cite{W98}. In contrast to the penalized approach used in \cite{MR4746009}, the innovation of the present method lies in its ability to preserve the geometric structures of the equations throughout the approximation process, making it particularly well-suited for studying the chiral boundary value problem of the LLG equation (i.e., \eqref{system1}) with or without dissipation.

More precisely, for any $\bm{m}\in \mathbb{S}^2$, the map $\bm{m}\times: T_{\bm{m}}\mathbb{S}^2 \to T_{\bm{m}}\mathbb{S}^2$ defines a complex structure on $\mathbb{S}^2$, which induces the following divergence structure in equation \eqref{system1}:
\[\bm{m}\times \De^{\mathfrak{h}} \bm{m}=\mbox{div}^{\mathfrak{h}}(\bm{m}\times \n^{\mathfrak{h}}\bm{m}).\]
This property constitutes the first crucial ingredient to obtain globally weak solutions to \eqref{system1} and to derive their a priori estimates. To preserve this structure, we introduce the following approximation of the complex structure $\bm{m}\times$:
\[\dfrac{\bm{m}}{\max\{|\bm{m}|,1\}}\times.\]
Based on this, we then construct suitable approximation equations for \eqref{system1} (see \eqref{I-galerkin1} and \eqref{II-galerkin1}) that retain the divergence structure.

The second key ingredient, which is also a novel aspect of our approach, is to establish the compactness of the approximate solutions. To this end, we introduce Sobolev spaces adapted to the helical derivative and establish uniform energy estimates compatible with the chiral boundary condition. These estimates, which may be of independent interest, yield the compactness.

\medskip
The remainder of this paper will be organized as follows. In  \autoref{sec:preliminaries}, we
introduce the notation for Sobolev spaces adapted to the framework of helical derivatives and collect several preliminary lemmas. In \autoref{sec:main}, we prove Theorem \ref{theroem1}. Theorems \ref{theroem1'} and \ref{theorem2}
are established in \autoref{s: Type-II weak solution}.  Finally, in \autoref{appendix}, we derive several fundamental properties and regularity estimates for maps in Sobolev spaces involving the helical derivatives.

\section{Preliminaries}\label{sec:preliminaries}

\subsection{Sobolev spaces with the helical derivative}
We start by introducing some notation and the basic properties of Sobolev space associated with the helical derivative.  Let $\Omega $ be a bounded smooth domain in $\mathbb{R}^3$, and $ \bm{u}=(u_1, u_2, u_3): \Omega \to \mathbb{R}^3$ be a vector-valued function.  For each $k\in \mathbb{N}$, let  $H^k(\Omega):=W^{k,2}(\Om; \mathbb{R}^3)$ be the usual Sobolev space. Then we define  
\begin{align*}
	H^k(\Omega; \mathbb{S}^2) := 
	\left\{ \bm{u}\in H^k(\Omega)\:\big|
	\:|\bm{u}|=1, \: {\rm a.e.} \: x \in \Omega \right\}.
\end{align*}
For simplicity, $H^{-1}(\Omega)$ denotes the dual space of the Sobolev space $H^1_0(\Omega)$.

Recalling the definitions of the partial helical derivatives \eqref{partialD}, the helical gradient, and the helical Laplacian \eqref{partailL}, we define the Sobolev space adapted to the helical derivative. 

\begin{definition}
	Let $\Omega\subset \mathbb{R}^3$ be a bounded smooth  domain. If $ \bm{u} \in H^k(\Omega) $ for $k\in \mathbb{N} $, we define the helical Sobolev norm of $\bm{u}$ by
	\[
	\|\bm{u}\|_{H_{\mathfrak{h}}^k(\Omega)} := \left(\sum_{i=0}^k\int_\Omega \left|(\n^{\mathfrak{h}})^i \bm{u}\right|^2  \mathrm{d} x\right)^{\frac{1}{2}}.
	\]
\end{definition}

The following two propositions on Sobolev space with the helical derivative are proved in Appendix \ref{appendix}.
\begin{proposition}\label{proposition2.1}
	There exists a positive constant $C=C(\Omega)$ such that for any $\bm{u}\in H^k(\Om)$ with $k=1,2$, we have
	\begin{align}
		C^{-1} \norm{\bm{u}}_{H^k_{\mathfrak{h}}(\Omega)} \leq \norm{\bm{u}}_{H^k(\Omega)} \leq C \norm{\bm{u}}_{H^k_{\mathfrak{h}}(\Omega)}.\label{equiv-1}
	\end{align}
\end{proposition}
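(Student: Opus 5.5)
The plan is to treat $\nabla^{\mathfrak{h}}$ as a zeroth-order perturbation of $\nabla$ and absorb the lower-order terms directly. From \eqref{partialD}, $\partial_i^{\mathfrak{h}}\bm{u}-\partial_i\bm{u}=-\bm{e}_i\times\bm{u}$. This is a bounded linear, pointwise operator with constant coefficients, and $|\bm{e}_i\times\bm{u}|\le|\bm{u}|$. Both Sobolev norms contain the $L^2$ norm of $\bm{u}$ as their zeroth-order term, so the lower-order discrepancies can be absorbed.

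\textbf{Case $k=1$.} Pointwise,
\[
|\nabla^{\mathfrak{h}}\bm{u}|\le|\nabla\bm{u}|+\sqrt3\,|\bm{u}|
\qquad\text{and}\qquad
|\nabla\bm{u}|\le|\nabla^{\mathfrak{h}}\bm{u}|+\sqrt3\,|\bm{u}|.
\]
Squaring, integrating, and adding $\|\bm{u}\|_{L^2}^2$ to both sides gives $\|\bm{u}\|_{H^1_{\mathfrak{h}}}^2\le 7\|\bm{u}\|_{H^1}^2$, together with the symmetric bound. So \eqref{equiv-1} holds for $k=1$ with a constant independent of $\Omega$.

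\textbf{Case $k=2$.} I will interpret $(\nabla^{\mathfrak{h}})^2\bm{u}$ as the collection $\partial_j^{\mathfrak{h}}\partial_i^{\mathfrak{h}}\bm{u}$. Because the $\bm{e}_i$ are constant, expanding gives
\[
\partial_j^{\mathfrak{h}}\partial_i^{\mathfrak{h}}\bm{u}
=\partial_j\partial_i\bm{u}-\bm{e}_i\times\partial_j\bm{u}-\bm{e}_j\times\partial_i\bm{u}+\bm{e}_j\times(\bm{e}_i\times\bm{u}).
\]
This yields the pointwise estimate $\big||(\nabla^{\mathfrak{h}})^2\bm{u}|-|\nabla^2\bm{u}|\big|\le C(|\nabla\bm{u}|+|\bm{u}|)$. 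Squaring and integrating gives $\|(\nabla^{\mathfrak{h}})^2\bm{u}\|_{L^2}\le\|\nabla^2\bm{u}\|_{L^2}+C\|\bm{u}\|_{H^1}$. Combining this with the $k=1$ case proves $\|\bm{u}\|_{H^2_{\mathfrak{h}}}\le C\|\bm{u}\|_{H^2}$.

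For the reverse inequality, I will use the same identity the other way round: $\|\nabla^2\bm{u}\|_{L^2}\le\|(\nabla^{\mathfrak{h}})^2\bm{u}\|_{L^2}+C(\|\nabla\bm{u}\|_{L^2}+\|\bm{u}\|_{L^2})$. The first-order term is then controlled by $\|\bm{u}\|_{H^1_{\mathfrak{h}}}$ through the $k=1$ case, and $\|\bm{u}\|_{H^1_{\mathfrak{h}}}\le\|\bm{u}\|_{H^2_{\mathfrak{h}}}$. Since all the identities are algebraic and apply to $H^2$ functions in the weak sense (the products involve only constant vectors), no density or boundary argument is needed. The constant $C$ depends only on the dimension.

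The only point requiring care is the precise meaning of $(\nabla^{\mathfrak{h}})^i$ in the definition of the norm. If the paper intended a different convention for the second order, such as the helical Laplacian $\Delta^{\mathfrak{h}}$ together with boundary conditions, then controlling the full Hessian would require elliptic regularity. That route is where a dependence of $C$ on $\Omega$ would enter. I would handle it by writing $\Delta\bm{u}=\Delta^{\mathfrak{h}}\bm{u}+(\text{lower order})$ and invoking standard $H^2$ estimates. With the iterated-gradient reading, however, the argument above is complete and elementary.
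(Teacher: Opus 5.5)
Your proposal is correct and follows essentially the same route as the paper. The paper also treats $\nabla^{\mathfrak{h}}$ as $\nabla$ perturbed by the zeroth-order term $-\bm{e}_i\times\bm{u}$ and handles $k=1$ by a direct comparison that absorbs $\|\bm{u}\|_{L^2}$. For $k=2$ it uses exactly your expansion $\partial_j^{\mathfrak{h}}\partial_i^{\mathfrak{h}}\bm{u}=\partial_{ji}\bm{u}-\bm{e}_i\times\partial_j\bm{u}-\bm{e}_j\times\partial_i\bm{u}+\bm{e}_j\times(\bm{e}_i\times\bm{u})$ in both directions. Your version is simply more explicit about the constants, which indeed depend only on the dimension. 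The elliptic-regularity route you mention at the end is what the paper uses for Proposition~\ref{proposition2}, not for this statement.
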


Using Proposition \ref{proposition2.1}, we establish the following equivalent estimates of Sobolev norms. 

\begin{proposition}\label{proposition2}
	Let $\Omega$ be a bounded smooth domain in $\mathbb{R}^3$. There exists a positive constant $C=C(\Omega)$  such that for all $\bm{u} \in H^2(\Omega)$ with $\nabla^\mathfrak{h} \bm{u} \cdot \bm{n}=0$, we have
	\begin{align*}
		\norm{\bm{u}}_{H^2_{\mathfrak{h}}(\Omega)}
		\leq C 
		\left(
		\norm{\Delta^{\mathfrak{h}}\bm{u}}_{L^2(\Omega)} + 
		\norm{\bm{u}}_{L^2(\Omega)} 
		\right).
	\end{align*}
\end{proposition}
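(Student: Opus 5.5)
By Proposition \ref{proposition2.1} it suffices to bound $\norm{\bm{u}}_{H^2(\Omega)}$ by $C(\norm{\Delta^{\mathfrak{h}}\bm{u}}_{L^2}+\norm{\bm{u}}_{L^2})$. The plan is to rewrite the helical boundary value problem as a standard Neumann problem for the ordinary Laplacian with lower-order perturbations. We then apply classical $H^2$ elliptic regularity and absorb the lower-order terms by interpolation.

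\textbf{Step 1: expand the helical operators.} Since $\bm{e}_i$ is constant,
\[
\Delta^{\mathfrak{h}}\bm{u}=\sum_i \partial_i(\partial_i\bm{u}-\bm{e}_i\times\bm{u})-\bm{e}_i\times(\partial_i\bm{u}-\bm{e}_i\times\bm{u})
=\Delta\bm{u}-2\nabla\times\bm{u}-2\bm{u},
\]
using $\sum_i\bm{e}_i\times\partial_i\bm{u}=\nabla\times\bm{u}$ and $\sum_i\bm{e}_i\times(\bm{e}_i\times\bm{u})=-2\bm{u}$. Likewise the boundary condition $\nabla^{\mathfrak{h}}\bm{u}\cdot\bm{n}=0$ reads
\[
\partial_{\bm{n}}\bm{u}=\bm{n}\times\bm{u}\quad\text{on }\partial\Omega,
\]
which is exactly the chiral condition \eqref{chiralBC}. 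Hence $\bm{u}$ solves
\[
-\Delta\bm{u}+\bm{u}=\bm{F}:=-\Delta^{\mathfrak{h}}\bm{u}-2\nabla\times\bm{u}-\bm{u},\qquad \partial_{\bm{n}}\bm{u}=\bm{g}:=\bm{n}\times\bm{u}|_{\partial\Omega}.
\]

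\textbf{Step 2: classical regularity.} Extend $\bm{n}$ to a smooth field $\tilde{\bm{n}}$ on $\overline{\Omega}$, so that $\bm{g}$ is the trace of $\tilde{\bm{n}}\times\bm{u}\in H^1(\Omega)$. The standard $H^2$ estimate for the inhomogeneous Neumann problem on a smooth bounded domain, applied to each component, gives
\[
\norm{\bm{u}}_{H^2}\le C\bigl(\norm{\bm{F}}_{L^2}+\norm{\bm{g}}_{H^{1/2}(\partial\Omega)}\bigr)\le C\bigl(\norm{\Delta^{\mathfrak{h}}\bm{u}}_{L^2}+\norm{\bm{u}}_{H^1}\bigr),
\]
using the trace theorem $\norm{\bm{g}}_{H^{1/2}(\partial\Omega)}\le C\norm{\tilde{\bm{n}}\times\bm{u}}_{H^1}\le C\norm{\bm{u}}_{H^1}$.

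\textbf{Step 3: absorb the $H^1$ term.} Use the interpolation inequality $\norm{\bm{u}}_{H^1}\le\epsilon\norm{\bm{u}}_{H^2}+C_\epsilon\norm{\bm{u}}_{L^2}$, take $\epsilon$ small, and absorb the $\epsilon\norm{\bm{u}}_{H^2}$ term into the left-hand side. This gives $\norm{\bm{u}}_{H^2}\le C(\norm{\Delta^{\mathfrak{h}}\bm{u}}_{L^2}+\norm{\bm{u}}_{L^2})$, and the upper bound in \eqref{equiv-1} with $k=2$ then yields the claimed estimate for $\norm{\bm{u}}_{H^2_{\mathfrak{h}}}$.

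\textbf{Main obstacle.} The only delicate point is Step 2: the boundary datum depends on $\bm{u}$ itself. The estimate is fine as an a priori bound, since $\bm{u}\in H^2$ is assumed and we never need to solve the problem. What must be checked is that the Neumann $H^2$ estimate with data in $H^{1/2}(\partial\Omega)$ is applicable, which holds because $\partial\Omega$ is smooth.

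An alternative that avoids trace spaces is to integrate by parts directly. Test $\Delta^{\mathfrak{h}}\bm{u}$ against $\bm{u}$, using that $\bm{e}_i\times$ is skew so that helical integration by parts holds with boundary term $\nabla^{\mathfrak{h}}\bm{u}\cdot\bm{n}=0$. This gives $\norm{\nabla^{\mathfrak{h}}\bm{u}}_{L^2}^2\le\norm{\Delta^{\mathfrak{h}}\bm{u}}_{L^2}\norm{\bm{u}}_{L^2}$. However, the second derivatives still require the elliptic estimate, so I would use the route above.
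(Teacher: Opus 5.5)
Your proof is correct and follows essentially the same route as the paper. Both arguments rewrite the helical problem as a Neumann problem for the flat Laplacian with boundary datum $\bm{n}\times\bm{u}$, apply the classical (Agmon--Douglis--Nirenberg) $H^2$ estimate with that datum controlled by $\norm{\bm{u}}_{H^1(\Omega)}$ through an $H^1$ extension, absorb the $H^1$ term by interpolation, and conclude with Proposition~\ref{proposition2.1}; the only cosmetic differences are that you use $-\Delta+I$ and the $H^{1/2}(\partial\Omega)$ trace norm where the paper uses $\Delta$ together with $\norm{\bm{u}}_{H^1(\Omega)}$ and an infimum-over-extensions norm, and these are equivalent.
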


The following proposition collects some basic  properties of the helical derivative. We refer to 
\cite{MR3269033,MR4077401} for a proof.

\begin{proposition}\label{lemma2.1}
	The helical derivative satisfies the following properties.
	
	\begin{enumerate}
    \item For $\bm{u}_1, \bm{u}_2 \in H^1(\Omega;\mathbb{R}^3)$, we have the Leibniz rule
		\begin{equation}
			\nabla^{\mathfrak{h}}(\bm{u}_1 \times \bm{u}_2) =  \nabla^{\mathfrak{h}}\bm{u}_1 \times \bm{u}_2 + \bm{u}_1 \times  \nabla^{\mathfrak{h}}\bm{u}_2.
		\end{equation} 
		\item For $\bm{u}_1 \in H^1(\Omega;\mathbb{R}^3)$ and $\bm{u}_2 \in H^2(\Omega;\mathbb{R}^3)$, we have the integration by parts formula
		\begin{equation}
			\langle  \nabla^{\mathfrak{h}}\bm{u}_1,  \nabla^{\mathfrak{h}}\bm{u}_2 \rangle_{\Omega} = \langle \bm{u}_1,  \nabla^{\mathfrak{h}}\bm{u}_2 \cdot \bm{n} \rangle_{\partial\Omega} - \langle \bm{u}_1,  \Delta^{\mathfrak{h}}\bm{u}_2 \rangle_{\Omega},
		\end{equation}
		where $\bm{n}$ is the outward unit normal vector on $\partial\Omega$. Especially,
		\begin{equation}
			\langle  \nabla^{\mathfrak{h}}  \boldsymbol{u}_1 \times \bm{u}_2,\nabla^\mathfrak{h}\boldsymbol{u}_2\rangle_\Omega
			=
			\langle  \boldsymbol{u}_1 \times  \boldsymbol{u}_2, \nabla^{\mathfrak{h}} \boldsymbol{u}_{2} \cdot \boldsymbol{n} \rangle_{\partial\Omega}
			-
			\langle \boldsymbol{u}_2 \times \Delta^{\mathfrak{h}} \boldsymbol{u}_2, \boldsymbol{u}_1 \rangle_\Omega .
		\end{equation}

		\item For  $\bm{u} \in C^2(\Omega \times (0,T);\mathbb{R}^3)$, we have 
		\begin{gather}
			\frac{1}{2} |\nabla^{\mathfrak{h}}\bm{u}|^{2}
			= \frac{1}{2}|\nabla\bm{u}|^{2} +\bm{u}\cdot(\nabla \times\bm{u}) +|\boldsymbol{u}|^{2}, \label{eq:helical_energy_decomposition}
			\\
			\Delta^{\mathfrak{h}}\bm{u} =  \Delta \bm{u} -2\nabla \times \bm{u} -2 \bm{u}\label{eq:helical_energy_decomposition2},
			\\
			\bm{u}\times \Delta^{\mathfrak{h}} \bm{u}  =  \sum_{i=1}^{3} \partial_{i}^{\mathfrak{h}} (\bm{u} \times \partial_i^{\mathfrak{h}}\bm{u}),
			\\
            \partial_i^{\mathfrak{h}}\partial_k^{\mathfrak{h}}\boldsymbol{u} - \partial_k^{\mathfrak{h}}\partial_i^{\mathfrak{h}}\boldsymbol{u} =\left[ (\boldsymbol{e}_i \cdot \boldsymbol{u}) \boldsymbol{e}_k - (\boldsymbol{e}_k \cdot \boldsymbol{u}) \boldsymbol{e}_i \right].\label{eq:commutor_estimate}
		\end{gather}
	\end{enumerate}
\end{proposition}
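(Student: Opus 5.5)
The plan is to verify each identity of Proposition \ref{lemma2.1} by direct computation from the definition \eqref{partialD}, $\partial_i^{\mathfrak{h}}\bm{u}=\partial_i\bm{u}-\bm{e}_i\times\bm{u}$. The first observation is that $\bm{u}\mapsto \bm{e}_i\times\bm{u}$ is a skew-symmetric derivation of the cross-product algebra on $\mathbb{R}^3$. By the Jacobi identity,
\[
\bm{e}_i\times(\bm{u}_1\times\bm{u}_2)=(\bm{e}_i\times\bm{u}_1)\times\bm{u}_2+\bm{u}_1\times(\bm{e}_i\times\bm{u}_2).
\]
The ordinary derivative $\partial_i$ also obeys the Leibniz rule for $\times$. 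Subtracting the two identities gives item (1) componentwise in $i$.

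Skew-symmetry also gives $(\bm{e}_i\times\bm{a})\cdot\bm{b}=-\bm{a}\cdot(\bm{e}_i\times\bm{b})$. Hence $\partial_i^{\mathfrak{h}}$ satisfies the pointwise product rule
\[
\partial_i(\bm{a}\cdot\bm{b})=\partial_i^{\mathfrak{h}}\bm{a}\cdot\bm{b}+\bm{a}\cdot\partial_i^{\mathfrak{h}}\bm{b}.
\]
Apply this with $\bm{a}=\bm{u}_1$ and $\bm{b}=\partial_i^{\mathfrak{h}}\bm{u}_2$, sum over $i$, and integrate with the divergence theorem. This yields the integration by parts formula in (2), first for smooth functions and then by density for $\bm{u}_1\in H^1$, $\bm{u}_2\in H^2$, using the trace theorem on the smooth boundary. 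For the "especially" identity, replace $\bm{u}_1$ by $\bm{u}_1\times\bm{u}_2$ and use the triple-product rule $(\bm{a}\times\bm{b})\cdot\bm{c}=\bm{a}\cdot(\bm{b}\times\bm{c})$. The Leibniz rule (1) expands $\partial_i^{\mathfrak{h}}(\bm{u}_1\times\bm{u}_2)$, and the term $\bm{u}_1\times\partial_i^{\mathfrak{h}}\bm{u}_2$ is orthogonal to $\partial_i^{\mathfrak{h}}\bm{u}_2$. The left side therefore reduces to $\langle\nabla^{\mathfrak{h}}\bm{u}_1\times\bm{u}_2,\nabla^{\mathfrak{h}}\bm{u}_2\rangle$. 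On the right side, $\langle\bm{u}_1\times\bm{u}_2,\Delta^{\mathfrak{h}}\bm{u}_2\rangle=\langle\bm{u}_2\times\Delta^{\mathfrak{h}}\bm{u}_2,\bm{u}_1\rangle$.

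For (3), I will expand the square:
\[
|\partial_i^{\mathfrak{h}}\bm{u}|^2=|\partial_i\bm{u}|^2-2\partial_i\bm{u}\cdot(\bm{e}_i\times\bm{u})+|\bm{e}_i\times\bm{u}|^2 .
\]
Summing over $i$ gives $\sum_i|\bm{e}_i\times\bm{u}|^2=2|\bm{u}|^2$ and
\[
-\sum_i\partial_i\bm{u}\cdot(\bm{e}_i\times\bm{u})=\sum_i\bm{u}\cdot(\bm{e}_i\times\partial_i\bm{u})=\bm{u}\cdot(\nabla\times\bm{u}).
\]
This proves \eqref{eq:helical_energy_decomposition}.

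For \eqref{eq:helical_energy_decomposition2}, I will compute
\[
\partial_i^{\mathfrak{h}}\partial_i^{\mathfrak{h}}\bm{u}=\partial_i^2\bm{u}-2\bm{e}_i\times\partial_i\bm{u}+\bm{e}_i\times(\bm{e}_i\times\bm{u}),
\]
and sum over $i$, using $\sum_i\bm{e}_i\times(\bm{e}_i\times\bm{u})=\sum_i\big((\bm{e}_i\cdot\bm{u})\bm{e}_i-\bm{u}\big)=-2\bm{u}$.

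The divergence identity follows from (1) with $\bm{u}_1=\bm{u}$ and $\bm{u}_2=\partial_i^{\mathfrak{h}}\bm{u}$, because $\partial_i^{\mathfrak{h}}\bm{u}\times\partial_i^{\mathfrak{h}}\bm{u}=0$.

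For the commutator \eqref{eq:commutor_estimate}, the constant-coefficient terms $\partial_i\partial_k\bm{u}$ cancel. The mixed terms $-\bm{e}_i\times\partial_k\bm{u}-\bm{e}_k\times\partial_i\bm{u}$ are symmetric in $(i,k)$ and also cancel. What remains is
\[
\bm{e}_i\times(\bm{e}_k\times\bm{u})-\bm{e}_k\times(\bm{e}_i\times\bm{u}).
\]
Expanding with the BAC–CAB rule gives $(\bm{e}_i\cdot\bm{u})\bm{e}_k-(\bm{e}_k\cdot\bm{u})\bm{e}_i$.

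The only delicate point is the bookkeeping of signs and orientation conventions, in particular for the curl term and the commutator. I will double-check these on $\bm{u}$ linear in $x$. The density argument in (2) is routine given the smoothness of $\Omega$.
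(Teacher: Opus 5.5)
Your proposal is correct. Each identity follows exactly as you compute, and every sign you obtain agrees with the statement, namely $\sum_i\bm{e}_i\times\partial_i\bm{u}=\nabla\times\bm{u}$, $\sum_i\bm{e}_i\times(\bm{e}_i\times\bm{u})=-2\bm{u}$, the product rule $\partial_i(\bm{a}\cdot\bm{b})=\partial_i^{\mathfrak{h}}\bm{a}\cdot\bm{b}+\bm{a}\cdot\partial_i^{\mathfrak{h}}\bm{b}$, and the commutator $(\bm{e}_i\times\bm{e}_k)\times\bm{u}$.

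The paper gives no argument of its own and only refers to \cite{MR3269033,MR4077401}, so your direct verification from \eqref{partialD} is the standard proof. One detail is worth adding: in the ``especially'' identity, $\bm{u}_1\times\bm{u}_2\in H^1(\Omega)$ holds via $H^2\hookrightarrow L^\infty$ and $H^1\hookrightarrow L^6$ in three dimensions; alternatively, prove it for smooth maps and pass to the limit by density.
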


\subsection{The basis of the Galerkin approximation} 
Let $\Omega$ be a bounded smooth domain in $\mathbb{R}^3$. Let $\lambda_i$ be the $i$-th eigenvalue of the operator $I - \Delta^{\mathfrak{h}}$ with respect to the helical boundary condition, and let $\bm{f}_i$ be the corresponding vector eigenfunction, namely
\begin{equation*}
	\begin{cases}
		(I - \Delta^{\mathfrak{h}} )\bm{f_i}=\lambda_i \bm{f}_i, \quad &x \in \Omega,\\
		\nabla^{\mathfrak{h}}\bm{f}_i \cdot \bm{n}=0, \quad &x \in \partial \Omega.
	\end{cases}
\end{equation*}

Since $I- \Delta^\mathfrak{h} $ is  a self-adjoint Fredholm operator (see Theorem 4.6 in \cite{MR2030823}), we may, without loss of generality, assume that
$\{\bm{f}_i\}_{i=1}^\infty$
forms a complete standard orthogonal basis of $L^2(\Omega;\mathbb{R}^3)$. 
Let
$\mathcal{H}_n:={\rm span}\{\bm{f}_1,\bm{f}_2,\cdots,\bm{f}_n\}$
be a finite subspace of 
$L^2(\Omega;\mathbb{R}^3)$, and $\mathcal{P}_n:L^2(\Omega;\mathbb{R}^3)\rightarrow \mathcal{H}_n$ 
be the Galerkin projection, which is a self-adjoint operator. In fact, for any $\bm{f}\in L^2(\Omega;\mathbb{R}^3)$, we have
$
\mathcal{P}_n \bm{f}=\sum_{i=1}^n\left\langle \bm{f},\bm{f}_i\right\rangle _{L^2(\Omega)}\bm{f}_i
$
and thus $\lim_{n\rightarrow\infty}\|\bm{f}- \mathcal{P}_n \bm{f}\|_{L^2(\Omega)}=0$.

\subsection{Auxiliary lemma on compactness}
Let $(\mathcal{X},\norm{\cdot }_{\mathcal{X}}) $ be a Banach space, and $f: [0,T] \to \mathcal{X}$ be a map. For any $p\geq1$ and $T>0$, we define
the functional space 
\begin{align*}
	L^p ([0,T];\mathcal{X})
	:= 
	\left\{ 
	f : [0,T]\to \mathcal{X}
	\:\big|\: \| f\|_{L^p ([0,T];\mathcal{X})}<\infty
	\right\}
\end{align*}
with the norm 
\begin{align*}
	\| f\|_{L^p ([0,T];\mathcal{X})}
	:= 
	\left(
	\int_0^{T} \| f\|_{\mathcal{X}}^p {\rm d}t
	\right)^\frac{1}{p}.
\end{align*}
For later application, we also need to recall the following compactness result. 
\begin{lemma}[see \cite{MR2986590} or  \cite{MR916688}]\label{LAS}
	Let $\mathcal{X}\subset \mathcal{B}\subset \mathcal{Y}$ be Banach spaces. Suppose that the embedding $\mathcal{B}\hookrightarrow \mathcal{Y}$ is continuous and that the embedding $\mathcal{X}\hookrightarrow \mathcal{B}$ is compact. Let $1\leq p,q,r\leq\infty$. For $T>0$, we define
	\[
	\mathcal{E}_{p,r} = \left\{f\in L^p([0,T];\mathcal{X}),\partial_t f\in L^r([0,T];\mathcal{Y})\right\}
	\]
	 equipping with a norm $\|f\|_{\mathcal{E}_{p,r}} := \|f\|_{L^p([0,T];\mathcal{X})} + \left\|\partial_t f\right\|_{L^r([0,T];\mathcal{Y})}$. Then, the following properties hold true.
	\begin{enumerate}
		\item[(1)] If $p<\infty$, then the embedding $\mathcal{E}_{p,r}$ in $L^p([0,T];\mathcal{B})$ is compact;
		\item[(2)] If $p<\infty$ and $p<q$, the embedding $\mathcal{E}_{p,r}\cap L^q([0,T];\mathcal{B})$ into $L^s([0,T];\mathcal{B})$ is compact for all $1\leq s<q$;
		\item[(3)] If $p=\infty$ and $r>1$, the embedding of $\mathcal{E}_{p,r}$ in $C^0([0,T];\mathcal{B})$ is compact.
	\end{enumerate}
\end{lemma}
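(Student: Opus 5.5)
The statement is the classical Aubin--Lions--Simon lemma. The plan is to prove it in the standard way: an interpolation (Ehrling) inequality combined with Simon's translation characterization of compact sets in $L^p([0,T];\mathcal{B})$. Throughout, $F$ denotes a bounded set in $\mathcal{E}_{p,r}$.

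\emph{Ehrling inequality.} Since $\mathcal{X}\hookrightarrow\mathcal{B}$ is compact and $\mathcal{B}\hookrightarrow\mathcal{Y}$ is continuous and injective, for every $\varepsilon>0$ there is $C_\varepsilon$ with
\[
\|u\|_{\mathcal{B}}\le \varepsilon\|u\|_{\mathcal{X}}+C_\varepsilon\|u\|_{\mathcal{Y}},\qquad u\in\mathcal{X}.
\]
I will prove this by contradiction. Suppose a sequence has $\|u_n\|_{\mathcal{X}}=1$ and $\|u_n\|_{\mathcal{B}}>\varepsilon+n\|u_n\|_{\mathcal{Y}}$. Compactness gives $u_n\to u$ in $\mathcal{B}$, and then $\|u\|_{\mathcal{B}}\ge\varepsilon$. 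On the other hand $\|u_n\|_{\mathcal{Y}}\to0$, so $u=0$, a contradiction.

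\emph{Part (1).} I would use Simon's criterion: $F\subset L^p([0,T];\mathcal{B})$ with $p<\infty$ is relatively compact if two conditions hold.
\begin{enumerate}
\item[(i)] For all $0<t_1<t_2<T$, the set $\{\int_{t_1}^{t_2}f\,{\rm d}t : f\in F\}$ is relatively compact in $\mathcal{B}$.
\item[(ii)] $\|f(\cdot+h)-f\|_{L^p([0,T-h];\mathcal{B})}\to0$ as $h\to0$, uniformly in $f\in F$.
\end{enumerate}
Condition (i) is immediate: these integrals are bounded in $\mathcal{X}$ by H\"older, and $\mathcal{X}\hookrightarrow\mathcal{B}$ is compact.

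For (ii), I first estimate translations in $\mathcal{Y}$. From $f(t+h)-f(t)=\int_t^{t+h}\partial_tf$ and Fubini,
\[
\|f(\cdot+h)-f\|_{L^1([0,T-h];\mathcal{Y})}\le h\|\partial_tf\|_{L^1([0,T];\mathcal{Y})}.
\]
Next, $f\in W^{1,1}([0,T];\mathcal{Y})$ with a uniform bound, since $\|f\|_{L^1(\mathcal{Y})}$ is controlled by the $L^p(\mathcal{X})$ norm. Hence $F$ is bounded in $L^\infty([0,T];\mathcal{Y})$. Interpolating, $\|g\|_{L^p}\le\|g\|_{L^1}^{1/p}\|g\|_{L^\infty}^{1-1/p}$, so translations are uniformly small in $L^p(\mathcal{Y})$. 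Finally, Ehrling gives
\[
\|\tau_hf-f\|_{L^p(\mathcal{B})}\le 2\varepsilon\sup_F\|f\|_{L^p(\mathcal{X})}+C_\varepsilon\|\tau_hf-f\|_{L^p(\mathcal{Y})}.
\]
Choosing $\varepsilon$ first and then $h$ small yields (ii).

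\emph{Part (2).} By (1), $F$ is relatively compact in $L^p(\mathcal{B})$. If $s\le p$, this gives compactness in $L^s(\mathcal{B})$ directly on the bounded interval. If $p<s<q$, I would use H\"older interpolation,
\[
\|g\|_{L^s}\le\|g\|_{L^p}^{\theta}\|g\|_{L^q}^{1-\theta},
\]
together with the uniform $L^q(\mathcal{B})$ bound. This turns Cauchy sequences in $L^p(\mathcal{B})$ into Cauchy sequences in $L^s(\mathcal{B})$.

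\emph{Part (3).} I would use Arzel\`a--Ascoli in $C^0([0,T];\mathcal{B})$. For equicontinuity in $\mathcal{Y}$, H\"older gives
\[
\|f(t)-f(s)\|_{\mathcal{Y}}\le|t-s|^{1-1/r}\|\partial_tf\|_{L^r(\mathcal{Y})},
\]
which is where $r>1$ is used. The remaining requirements are pointwise relative compactness in $\mathcal{B}$ and equicontinuity in $\mathcal{B}$. Both follow, via the Ehrling inequality, once $\sup_t\|f(t)\|_{\mathcal{X}}$ is bounded for every $t$ rather than only almost everywhere.

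\emph{Main obstacle.} The delicate step is this last point. The function $f$ is continuous only into $\mathcal{Y}$, while the $L^\infty(\mathcal{X})$ bound holds only for almost every $t$. I would first try to handle it with a density argument. At a general $t$, approximate $f(t)$ in $\mathcal{Y}$ by $f(t_k)$ at good times $t_k$. Use Ehrling applied to differences $f(t_k)-f(t_l)$ to show that $f(t_k)$ is Cauchy in $\mathcal{B}$. Then place $f(t)$ in the $\mathcal{B}$-closure of a bounded set of $\mathcal{X}$, which is compact in $\mathcal{B}$; this is enough for Arzel\`a--Ascoli. The same Ehrling bound on differences then upgrades equicontinuity from $\mathcal{Y}$ to $\mathcal{B}$.
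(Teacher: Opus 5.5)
Your proposal is correct. It is the standard argument of Simon: Ehrling's inequality, the integral-plus-translation compactness criterion in $L^p([0,T];\mathcal{B})$, H\"older interpolation for (2), and Arzel\`a--Ascoli for (3), where your density argument places every $f(t)$ in the compact $\mathcal{B}$-closure of an $\mathcal{X}$-ball, so no everywhere bound on $\|f(t)\|_{\mathcal{X}}$ is needed. The paper gives no proof of this lemma and simply cites Simon and Boyer--Fabrie, whose proof is essentially the one you outline.
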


\section{Type-I weak solutions to LLG equation}
\label{sec:main}
In this section, we establish the global existence of type-I weak solutions to the problem \eqref{system1}, namely, 
\begin{equation}\label{system1'}
	\begin{cases}
		D^{\mathfrak{h}}_t\bm{m}=-\alpha \bm{m} \times \bm{H}(\bm{m})-\beta \bm{m} \times \left(\bm{m} \times \bm{H}(\bm{m})\right), \quad & \text{in } \Omega \times [0,\infty), \\[1ex]
		\nabla^{\mathfrak{h}}  \bm{m} \cdot\bm{n} = 0, \quad & \text{on } \partial \Omega \times [0,\infty),\\[1ex]
		\bm{m}(\cdot,0) = \bm{m}_{0}:\Om\to \mathbb{S}^2,  &\text{in } \Omega,
	\end{cases}
\end{equation}
where we denote the transport time derivative of $\bm{m}$ by 
 \[D^{\mathfrak{h}}_t\bm{m}:=\partial_{t} \bm{m} + \gamma (\bm{v} \cdot \nabla^{\mathfrak{h}}\bm{m}+ \bm{v} \times \bm{m}).\]

%
Inspired by \cite{W98}, we consider the following approximate system of problem \eqref{system1'}: 
\begin{equation}\label{I-galerkin1}
	\begin{cases}
		\begin{aligned}
			\partial_{t} \bm{m} - \varepsilon\Delta^{\mathfrak{h}} \bm{m}
			&=\gamma \mathcal{J}(\bm{m})\times \left( \mathcal{J}(\bm{m})\times \left(\bm{v} \cdot \nabla^{\mathfrak{h}} \bm{m} + \bm{v} \times \mathcal{J}(\bm{m})\right)\right)
			\\
			&\quad 
			-\alpha \mathcal{J}(\bm{m}) \times  
			\bm{H}(\bm{m}) - \beta  \mathcal{J}(\bm{m}) \times
			\left( 
			\mathcal{J}(\bm{m}) \times \bm{H} (\bm{m})
			\right), 
		\end{aligned}
		&\text{in } \Omega \times [0,\infty),
		\\
		\nabla^{\mathfrak{h}}  \bm{m} \cdot\bm{n} = 0,  & \text{on } \partial \Omega \times [0,\infty),  
		\\
		\bm{m}(\cdot,0) = \bm{m}_{0},   &\text{in } \Omega,
	\end{cases}
\end{equation}
where $\mathcal{J}(\bm{m})$ is a perturbation of $\bm{m}$, given by
\begin{align}\label{definition of J}
	\mathcal{J}(\bm{m}) =\dfrac{\bm{m}}{\max\{|\bm{m}|,1\}}.
\end{align}
Note that if $|\bm{m}|=1$, we have
\[-\mathcal{J}(\bm{m})\times(\mathcal{J}(\bm{m})\times \cdot)={\rm Id}: T_{\bm{m}}\mathbb{S}^2\to T_{\bm{m}}\mathbb{S}^2,\]
where ${\rm Id}$ denotes the identity map. Consequently, when $\ep=0$, the approximate system \eqref{I-galerkin1} reduces to the original equation \eqref{system1}.

\subsection{Galerkin approximation and a priori estimates}\label{ss: Ga-Type-I}
We employ a Galerkin approximation scheme to establish the well-posedness of the perturbed problem \eqref{I-galerkin1}, that is
\begin{equation}\label{I_projection}
    \begin{cases}
       \begin{aligned}
            \partial_{t} \bm{m}_n^\varepsilon - \varepsilon \Delta^{\mathfrak{h}} \bm{m}_n^\varepsilon =&
        \gamma  \mathcal{P}_n
          \left( \mathcal{J}(\bm{m}_n^\varepsilon )\times  \left(\mathcal{J}(\bm{m}_n^\varepsilon)\times (\bm{v} \cdot \nabla^{\mathfrak{h}} \bm{m}_n^\varepsilon)
          \right)\right)
          \\
         &+\gamma \mathcal{P}_n
          \left( \mathcal{J}(\bm{m}_n^\varepsilon )\times  \left(\mathcal{J}\left(\bm{m}_n^\varepsilon)\times (\bm{v} \times \mathcal{J}(\bm{m}_n^\varepsilon)\right)\right)\right)
    \\
       &- \alpha \mathcal{P}_n \left(
            
                   \mathcal{J}(\bm{m}_n^\varepsilon) \times  
         \bm{H}(\bm{m}_n^\varepsilon) 
 \right)
 \\
     &- \beta  \mathcal{P}_n 
     \left( 
     \mathcal{J}(\bm{m}_n^\varepsilon)\times
      \left( 
          \mathcal{J}(\bm{m}_n^\varepsilon)\times  \bm{H}(\bm{m}_n^\varepsilon) 
      \right) \right), 
       \end{aligned}
       &\text{in } \Omega \times [0,\infty),
      \\
      \nabla^{\mathfrak{h}}  \bm{m}_n^\varepsilon \cdot\bm{n} = 0,   &\text{on } \partial \Omega \times [0,\infty),  
      \\
      \bm{m}_n^\varepsilon (\cdot,0)= \mathcal{P}_n(\bm{m}_0), &\text{in } \Omega.
    \end{cases}
\end{equation}

Now, we seek solutions to problem \eqref{I_projection} in the space  $\mathcal{H}_n$. Let $\bm{m}_n^\varepsilon(x,t)=\sum_{i=1}^n g_i^n(t)\bm{f}_i(x)$. Then the vector-valued function $\bm{g}^n(t)=(g_1^n(t),g_2^n(t),\cdots,g_n^n(t))$ satisfies the following ordinary differential equation
\begin{equation*}
	\begin{cases}
		\dfrac{{\rm d}\bm{g}^n}{{\rm d}t} = -\varepsilon S \bm{g}^n + \bm{F}(t, \bm{g}^n)
		\\
		\bm{g}^n(0)=\left(\left\langle \bm{m}_0,\bm{f}_1\right\rangle ,\cdots,\left\langle \bm{m}_0,\bm{f}_n\right\rangle \right)
	\end{cases}
\end{equation*}
where the matrix $S=((\lambda_i-1)\delta_{ij})_{n\times n}$, and $\bm{F}(t,\bm{g}^n)$ is locally Lipschitz in $\bm{g}^n$, since $\mathcal{J}(\bm{u})$ is locally Lipschitz on $\bm{u}$. Hence, there exists a unique solution $\bm{m}_n^\varepsilon$ to the problem \eqref{I_projection} on $\Omega\times[0,T_n)$ for some $T_n> 0$ according to the standard existence theory for ordinary differential equations.

To proceed, we establish uniform energy estimates for $\bm{m}_n^\varepsilon$ with respect to $n$.
\begin{lemma}
 Suppose $\bm{m}_0 \in L^2(\Omega)$, we have the basic energy estimate:
 \begin{align}\label{I-energy}
  \sup\limits_{0\leq t\leq T} \int_{\Omega}|\bm{m}_n^\varepsilon|^2 {\rm d}x + 2\varepsilon \int_0^T \int_{\Omega}|\nabla^{\mathfrak{h}}\bm{m}_n^\varepsilon|^2 {\rm d}x  {\rm d}t
   \leq \int_{\Omega} |\bm{m}_0|^2 {\rm d}x,
  \quad \forall T \in (0,T_n).
 \end{align}
  Moreover, this estimate implies that $T_n =+\infty$, and hence $\bm{m}_n^\varepsilon$ is a global solution.
\end{lemma}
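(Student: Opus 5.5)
The plan is to test the Galerkin equation \eqref{I_projection} against $\bm{m}_n^\varepsilon$ itself. This is legitimate because $\bm{m}_n^\varepsilon(t)\in\mathcal{H}_n$ and $\mathcal{P}_n$ is self-adjoint with $\mathcal{P}_n\bm{m}_n^\varepsilon=\bm{m}_n^\varepsilon$. Hence every projected term $\langle\mathcal{P}_n(\cdots),\bm{m}_n^\varepsilon\rangle$ equals $\langle\cdots,\bm{m}_n^\varepsilon\rangle$, and the projection can be dropped. The left-hand side then gives
\[
\frac12\frac{{\rm d}}{{\rm d}t}\int_\Omega|\bm{m}_n^\varepsilon|^2{\rm d}x-\varepsilon\int_\Omega\Delta^{\mathfrak{h}}\bm{m}_n^\varepsilon\cdot\bm{m}_n^\varepsilon{\rm d}x .
\]
Each basis function satisfies $\nabla^{\mathfrak{h}}\bm{f}_i\cdot\bm{n}=0$, so $\bm{m}_n^\varepsilon$ obeys the helical Neumann condition. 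The integration by parts formula in Proposition \ref{lemma2.1}(2) then turns the second term into $\varepsilon\int_\Omega|\nabla^{\mathfrak{h}}\bm{m}_n^\varepsilon|^2{\rm d}x$ with no boundary term. The eigenfunctions are smooth up to the boundary by elliptic regularity, so $\bm{m}_n^\varepsilon\in H^2$ and the formula applies.

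The key step is to show that every right-hand side term vanishes after pairing with $\bm{m}_n^\varepsilon$. Each such term has the form $\mathcal{J}(\bm{m}_n^\varepsilon)\times\bm{w}$ for some vector field $\bm{w}$: this covers the $\alpha$-term, and the $\gamma$- and $\beta$-terms are of the form $\mathcal{J}\times(\mathcal{J}\times\cdot)$. Since $\mathcal{J}(\bm{m}_n^\varepsilon)$ is a nonnegative scalar multiple of $\bm{m}_n^\varepsilon$ by \eqref{definition of J}, we get pointwise
\[
(\mathcal{J}(\bm{m}_n^\varepsilon)\times\bm{w})\cdot\bm{m}_n^\varepsilon=0 .
\]
This structure, produced by the choice of $\mathcal{J}$, is exactly what makes the estimate independent of $\bm{v}$, $\bm{\pi}$ and $\bm{f}$. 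One must also check that these pointwise products are integrable. For fixed $n$ the solution is smooth in $x$, $|\mathcal{J}|\le1$, $\bm{v}\in L^2_{t}L^\infty_x$, and $\bm{\pi}(\bm{m}),\bm{f}\in L^2$, so each product is an $L^1_{\rm loc}$ function of $t$ and the ODE identity holds for a.e.\ $t$.

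This yields
\[
\frac{{\rm d}}{{\rm d}t}\|\bm{m}_n^\varepsilon\|_{L^2}^2+2\varepsilon\|\nabla^{\mathfrak{h}}\bm{m}_n^\varepsilon\|_{L^2}^2=0 .
\]
Integrating in time and using $\|\mathcal{P}_n\bm{m}_0\|_{L^2}\le\|\bm{m}_0\|_{L^2}$ gives \eqref{I-energy}.

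For globality, orthonormality of $\{\bm{f}_i\}$ gives $|\bm{g}^n(t)|=\|\bm{m}_n^\varepsilon(t)\|_{L^2}\le\|\bm{m}_0\|_{L^2}$, so the ODE solution stays bounded on $[0,T_n)$. The vector field $\bm{F}$ is locally Lipschitz in $\bm{g}$ and integrable in $t$ on bounded sets, because $\bm{v}\in L^2_{\rm loc}$ and $\bm{f}$ is locally Lipschitz. The standard continuation criterion for Carathéodory ODEs then rules out a finite maximal time, so $T_n=+\infty$.

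The main obstacle is this last justification: the ODE has only measurable-in-time coefficients, so one should invoke the Carathéodory version of the existence and continuation theorem rather than the classical Picard theorem. By comparison, the cancellation step is straightforward.
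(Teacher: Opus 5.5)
Your proposal is correct and follows the paper's proof essentially step for step: you test \eqref{I_projection} with $\bm{m}_n^\varepsilon$, drop $\mathcal{P}_n$ by self-adjointness, kill every right-hand term via $(\mathcal{J}(\bm{m}_n^\varepsilon)\times\bm{b})\cdot\bm{m}_n^\varepsilon=0$, integrate in time, and continue the ODE using $|\bm{g}^n(t)|=\|\bm{m}_n^\varepsilon(t)\|_{L^2(\Omega)}$, with your Carath\'eodory remark supplying a justification the paper leaves implicit. One small slip you share with the paper: integrating gives $\|\bm{m}_n^\varepsilon(t)\|_{L^2(\Omega)}^2+2\varepsilon\int_0^t\|\nabla^{\mathfrak{h}}\bm{m}_n^\varepsilon\|_{L^2(\Omega)}^2\,{\rm d}s\le\|\bm{m}_0\|_{L^2(\Omega)}^2$ for each $t$, so the supremum plus the full time integral, as combined in \eqref{I-energy}, is bounded by $2\|\bm{m}_0\|_{L^2(\Omega)}^2$ rather than $\|\bm{m}_0\|_{L^2(\Omega)}^2$ (the constant $1$ can genuinely fail, since $\|\bm{m}_n^\varepsilon(t)\|_{L^2(\Omega)}$ is nonincreasing), which is harmless for all later uses.
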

\begin{proof}
Multiplying the equation in \eqref{I_projection} by $\bm{m}_n^\varepsilon$ and then integrating over $\Omega$ by parts, we obtain
 \begin{align}
 \frac{1}{2}\frac{{\rm d}}{{\rm d}t} \|\bm{m}_n^\varepsilon\|_{L^2(\Omega)}^2
 +\varepsilon \|\nabla^{\mathfrak{h}} \bm{m}_n^\varepsilon\|_{L^2(\Omega)}^2=0,\label{I-energy+1}
 \end{align}
where we have applied the property $(\mathcal{J}(\bm{m}_n^\varepsilon)\times \bm{b})\cdot\bm{m}_n^\varepsilon=0$ for any vector $\bm{b}$.  

Integrating \eqref{I-energy+1} from $0$ to $T$, we get the inequality \eqref{I-energy}. Moreover, noting that $\langle \bm{g}^n(t), \bm{g}^n(t)\rangle = \|\bm{m}_n^\varepsilon(t)\|_{L^2(\Omega)}^2$ for any $ t <T_n$, the estimate \eqref{I-energy} implies  
\begin{align*}
    \sup\limits_{t \in (0,T_n)} |\bm{g}^n(t)|^2 \leq \|\bm{m}_0\|_{L^2(\Omega)}^2,
\end{align*}
 which implies that $T_n$ can be extended  to $+\infty.$ The proof is completed.
\end{proof}

Next, we establish the following uniform $H^1$-estimates for $\bm{m}_n^\varepsilon$.
\begin{lemma}\label{lemma3.2}
Suppose that $\bm{m}_0 \in H^1(\Omega;\mathbb{S}^2)$, $\bm{v}\in L^2_{loc} ([0,\infty); L^\infty(\Omega))$, $\bm{f}\in L^2_{loc}([0,\infty);L^2(\Omega))$ and $\bm{\pi}$ satisfies \eqref{Cpi}, then for any $T>0$ the solution $\bm{m}_n^\varepsilon $  satisfies
\begin{equation}\label{I_H^1}
\frac{1}{2}  \sup\limits_{0\leq t\leq T}\| \bm{m}_n^\varepsilon(t)\|_{H_{\mathfrak{h}}^1(\Omega)}^2
+ 
\varepsilon \int_0^T\int_{\Omega}|\Delta^{\mathfrak{h}} \bm{m}_n^\varepsilon|^2 {\rm d} x{\rm d}t
+ \frac{\beta}{2} \int_0^T\int_{\Omega}  | \mathcal{J}(\bm{m}_n^\varepsilon)\times 
     \Delta^{\mathfrak{h}} \bm{m}_n^\varepsilon |^2{\rm d} x{\rm d}t
\leq \mathfrak{C}_1,
\end{equation}
where the constant $\mathfrak{C}_1$ depends only on $\al, \beta, \ga, T$, $\norm{\bm{v}}_{L^2([0,T]; L^\infty(\Om))}$ and $\norm{\bm{f}}_{L^2([0,T]\times \Om)}$.
\end{lemma}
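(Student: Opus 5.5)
We test the Galerkin equation \eqref{I_projection} against $-\Delta^{\mathfrak{h}}\bm{m}_n^\varepsilon$. This is admissible: each $\bm{f}_i$ satisfies $\Delta^{\mathfrak{h}}\bm{f}_i=(1-\lambda_i)\bm{f}_i$, so $\Delta^{\mathfrak{h}}\bm{m}_n^\varepsilon\in\mathcal{H}_n$ and the projections $\mathcal{P}_n$ can be dropped by self-adjointness. The boundary condition $\nabla^{\mathfrak{h}}\bm{m}_n^\varepsilon\cdot\bm{n}=0$, together with the integration by parts formula of Proposition \ref{lemma2.1}, gives
\[
\frac12\frac{{\rm d}}{{\rm d}t}\|\nabla^{\mathfrak{h}}\bm{m}_n^\varepsilon\|_{L^2}^2+\varepsilon\|\Delta^{\mathfrak{h}}\bm{m}_n^\varepsilon\|_{L^2}^2=-\langle \text{RHS},\Delta^{\mathfrak{h}}\bm{m}_n^\varepsilon\rangle .
\]
Here we use that $\partial_t$ commutes with $\nabla^{\mathfrak{h}}$. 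Adding the $L^2$ estimate \eqref{I-energy} will then control the full $H^1_{\mathfrak{h}}$ norm. We write $\bm{m}=\bm{m}_n^\varepsilon$, $\mathcal{J}=\mathcal{J}(\bm{m})$, $\Delta^{\mathfrak{h}}=\Delta^{\mathfrak{h}}\bm{m}$ and $\bm{g}=\bm{\pi}(\bm{m})+\bm{f}$, and handle the right-hand side term by term.

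The $\alpha$-term is $\alpha\langle \mathcal{J}\times(\Delta^{\mathfrak{h}}+\bm{g}),\Delta^{\mathfrak{h}}\rangle$. Its $\Delta^{\mathfrak{h}}$ part vanishes pointwise. This cancellation is the essential role of the $\mathcal{J}\times$ structure, and we would use it instead of integrating by parts on the lower-order part. The remainder satisfies
\[
|\alpha\langle \mathcal{J}\times\bm{g},\Delta^{\mathfrak{h}}\rangle|=|\alpha\langle \bm{g},\Delta^{\mathfrak{h}}\times\mathcal{J}\rangle|\le \tfrac{\beta}{8}\|\mathcal{J}\times\Delta^{\mathfrak{h}}\|^2+C\|\bm{g}\|^2
\]
when $\beta>0$. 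For the $\beta$-term we use $(\mathcal{J}\times(\mathcal{J}\times \bm{a}))\cdot\bm{b}=-(\mathcal{J}\times\bm{a})\cdot(\mathcal{J}\times\bm{b})$. It therefore contributes
\[
-\beta\|\mathcal{J}\times\Delta^{\mathfrak{h}}\|^2-\beta\langle \mathcal{J}\times\bm{g},\mathcal{J}\times\Delta^{\mathfrak{h}}\rangle ,
\]
which produces the good dissipation term. The cross part is absorbed by Young's inequality, using $|\mathcal{J}|\le1$ and $\|\bm{g}\|_{L^2}\le C_{\bm{\pi}}\|\bm{m}\|_{L^2}+\|\bm{f}\|_{L^2}$, which is bounded by \eqref{I-energy}. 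The $\gamma$-terms likewise become
\[
\gamma\langle \mathcal{J}\times(\bm{v}\cdot\nabla^{\mathfrak{h}}\bm{m}+\bm{v}\times\mathcal{J}),\mathcal{J}\times\Delta^{\mathfrak{h}}\rangle ,
\]
bounded by
\[
|\gamma|\,\|\bm{v}\|_{L^\infty}\big(\|\nabla^{\mathfrak{h}}\bm{m}\|_{L^2}+|\Omega|^{1/2}\big)\|\mathcal{J}\times\Delta^{\mathfrak{h}}\|_{L^2}\le\tfrac{\beta}{8}\|\mathcal{J}\times\Delta^{\mathfrak{h}}\|^2+C\|\bm{v}\|_{L^\infty}^2\big(\|\nabla^{\mathfrak{h}}\bm{m}\|^2+1\big).
\]

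Collecting these bounds gives
\[
\frac{{\rm d}}{{\rm d}t}\|\nabla^{\mathfrak{h}}\bm{m}\|^2+2\varepsilon\|\Delta^{\mathfrak{h}}\|^2+\beta\|\mathcal{J}\times\Delta^{\mathfrak{h}}\|^2\le C\big(1+\|\bm{v}\|_{L^\infty}^2\big)\big(\|\nabla^{\mathfrak{h}}\bm{m}\|^2+1\big)+C\|\bm{f}\|_{L^2}^2 .
\]
Since $\|\bm{v}\|_{L^\infty}^2\in L^1(0,T)$, Gronwall's inequality yields \eqref{I_H^1}. For the initial data we use $\|\nabla^{\mathfrak{h}}\mathcal{P}_n\bm{m}_0\|_{L^2}\le\|\nabla^{\mathfrak{h}}\bm{m}_0\|_{L^2}$. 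This holds because $\|\bm{u}\|_{H^1_{\mathfrak{h}}}^2=\sum_i\lambda_i|\langle \bm{u},\bm{f}_i\rangle|^2$ via the integration by parts formula, so $\mathcal{P}_n$ is a contraction in $H^1_{\mathfrak{h}}$; the resulting bound is controlled by $\|\bm{m}_0\|_{H^1}$ through Proposition \ref{proposition2.1}.

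The main obstacle is the estimate when $\beta=0$, or more generally when the constant must not degenerate as $\beta\to0$, since there is then no dissipation to absorb the cross terms. For the $\alpha$ lower-order term we would instead integrate by parts: $\langle \mathcal{J}\times\bm{g},\Delta^{\mathfrak{h}}\rangle=-\langle \nabla^{\mathfrak{h}}(\mathcal{J}\times\bm{g}),\nabla^{\mathfrak{h}}\bm{m}\rangle$, which would need $\bm{g}\in H^1$ and is not available under \eqref{Cpi}. Likewise, the $\gamma$-term needs to be absorbed by something. Accordingly, I would accept constants depending on $\beta^{-1}$ in the regime $\beta>0$ relevant to Theorem \ref{theroem1}. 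I would first check whether $\mathfrak{C}_1$ is allowed to depend on $\beta$ in this way, since the statement lists $\beta$ among its parameters.
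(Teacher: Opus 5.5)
Your proposal is correct and matches the paper's proof step for step. The paper also tests with $-\Delta^{\mathfrak{h}}\bm{m}_n^\varepsilon$, uses the pointwise cancellation and the identity $(\mathcal{J}\times(\mathcal{J}\times\bm{a}))\cdot\bm{b}=-(\mathcal{J}\times\bm{a})\cdot(\mathcal{J}\times\bm{b})$ to write every term against $\mathcal{J}(\bm{m}_n^\varepsilon)\times\Delta^{\mathfrak{h}}\bm{m}_n^\varepsilon$, absorbs the $\alpha$-, $\gamma$- and cross $\beta$-terms into the $\beta$-dissipation with constants of order $\beta^{-1}$, and closes with Gronwall and the projection bound $\|\nabla^{\mathfrak{h}}\mathcal{P}_n\bm{m}_0\|_{L^2}\le\|\nabla^{\mathfrak{h}}\bm{m}_0\|_{L^2}$ (which you justify and the paper only asserts). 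Your reading of the constant is also right: $\mathfrak{C}_1$ does depend on $\beta^{-1}$ in the paper, so the lemma implicitly requires $\beta>0$.
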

\begin{proof}
Taking $-\Delta^{\mathfrak{h}} \bm{m}_n^\varepsilon$ as a test function for the equation \eqref{I_projection} and using  Proposition \ref{lemma2.1}, we have
\begin{equation}\label{es-H-1m_n}
\begin{aligned}
&\frac{1}{2} \frac{{\rm d}}{{\rm d}t} \|\nabla^{\mathfrak{h}} \bm{m}_n^\varepsilon\|_{L^2(\Omega)}^2
+ 
\varepsilon \|\Delta^{\mathfrak{h}} \bm{m}_n^\varepsilon\|_{L^2(\Omega)}^2 \\
=&-\gamma  \int_{\Omega}  
          \left(
              \mathcal{J}(\bm{m}_n^\varepsilon) 
              \times 
              \left( \mathcal{J}(\bm{m}_n^\varepsilon) \times \left(\bm{v} \cdot \nabla^{\mathfrak{h}} \bm{m}_n^\varepsilon +\bm{v} \times \mathcal{J}(\bm{m}_n^\varepsilon)\right)\right)
            \right)
          \cdot \Delta^{\mathfrak{h}}\bm{m}_n^\varepsilon 
          {\rm d}x
\\
&
+\alpha \int_{\Omega}  
\left(
\mathcal{J}(\bm{m}_n^\varepsilon)\times   \bm{H}(\bm{m}_n^\varepsilon)\right)\cdot \Delta^{\mathfrak{h}} \bm{m}_n^\varepsilon {\rm d} x
\\
&
+\beta\int_\Omega \left(\mathcal{J}(\bm{m}_n^\varepsilon) \times \left(\mathcal{J}(\bm{m}_n^\varepsilon) \times  \bm{H}(\bm{m}_n^\varepsilon)\right)\right)
\cdot \Delta^{\mathfrak{h}} \bm{m}_n^\varepsilon\:{\rm d}x\\
:=&A_1+A_2+A_3.
\end{aligned}
\end{equation}

The three terms $A_1$-$A_3$ admit the following estimates:
\begin{align*}
        A_1=&\gamma \int_{\Omega} 
            \left(
                     \mathcal{J}(\bm{m}_n^\varepsilon)  
                     \times 
                     \Delta^{\mathfrak{h}}\bm{m}_n^\varepsilon
            \right)
            \cdot 
             \left(
                     \mathcal{J}(\bm{m}_n^\varepsilon)  
                     \times 
                     \left(
                         \bm{v} \cdot \nabla^{\mathfrak{h}} \bm{m}_n^\varepsilon 
                     +
                        \bm{v} \times \mathcal{J}(\bm{m}_n^\varepsilon)
                    \right) 
            \right){\rm d}x
        \\
        \leq& \dfrac{\beta}{8} \int_{\Omega} 
         \left|
             \mathcal{J}(\bm{m}_n^\varepsilon)  
                     \times 
                     \Delta^{\mathfrak{h}}\bm{m}_n^\varepsilon
        \right|^2    {\rm d}x
        + 
        C\dfrac{\gamma^2}{\beta} 
        \| \bm{v}\|_{L^\infty(\Omega)}^2 
        \int_{\Omega}
        |\nabla^{\mathfrak{h}} \bm{m}_n^\varepsilon|^2
        + 
        |\bm{m}_n^\varepsilon|^2 
        {\rm d}x,\\
A_2=& 
    - \alpha \int_{\Omega}    
 \left(
     \mathcal{J}(\bm{m}_n^\varepsilon)\times 
     \Delta^{\mathfrak{h}} \bm{m}_n^\varepsilon 
\right)
 \cdot 
  \left(
         \bm{\pi}( \bm{m}_n^\varepsilon) + \bm{f}
         \right)
 {\rm d} x
 \\
 &\leq \frac{\beta}{8} \int_{\Omega}  | \mathcal{J}(\bm{m}_n^\varepsilon)\times 
     \Delta^{\mathfrak{h}} \bm{m}_n^\varepsilon |^2{\rm d} x
     + \frac{2\alpha^2}{\beta}\int_{\Omega} |\bm{\pi}( \bm{m}_n^\varepsilon)|^2 + |\bm{f}|^2 {\rm d}x,\\
A_3=&-\beta \int_\Omega\left(\mathcal{J}(\bm{m}_n^\varepsilon) \times 
    \Delta^{\mathfrak{h}} \bm{m}_n^\varepsilon\right)
    \cdot  \left(\mathcal{J}(\bm{m}_n^\varepsilon) \times 
    \left( 
    \Delta^{\mathfrak{h}} \bm{m}_n^\varepsilon
    +  \bm{\pi}( \bm{m}_n^\varepsilon) + \bm{f}
    \right)
    \right)
    \:{\rm d}x
\\
    &\leq -\frac{3\beta}{4} \int_{\Omega}  | \mathcal{J}(\bm{m}_n^\varepsilon)\times 
     \Delta^{\mathfrak{h}} \bm{m}_n^\varepsilon |^2{\rm d} x
     +\beta\int_{\Omega} |\bm{\pi}( \bm{m}_n^\varepsilon)|^2 +|\bm{f}|^2 {\rm d}x.
\end{align*}

Substituting the above estimates of $A_1$-$A_3$ into \eqref{es-H-1m_n}, we get
\begin{align*}
    &\quad\frac{1}{2} \frac{{\rm d}}{{\rm d}t} \|\nabla^{\mathfrak{h}} \bm{m}_n^\varepsilon\|_{L^2(\Omega)}^2
+ 
\varepsilon \|\Delta^{\mathfrak{h}} \bm{m}_n^\varepsilon\|_{L^2(\Omega)}^2
+ \frac{\beta}{2} 
\| \mathcal{J}(\bm{m}_n^\varepsilon)\times 
     \Delta^{\mathfrak{h}} \bm{m}_n^\varepsilon \|_{L^2(\Omega)}^2
\\
\leq 
&
   C\| \bm{v}\|_{L^\infty(\Omega)}^2    \norm{\bm{m}_n^\varepsilon}_{H^1_{\mathfrak{h}}(\Omega)}^2
     +C
\left(
     \norm{\bm{m}_n^\varepsilon}_{L^2(\Omega)}^2 
     +\norm{\bm{f} }_{L^2(\Omega)}^2
     \right).
\end{align*}
By Gronwall's inequality, this inequality implies the desired estimate \eqref{I_H^1}. Here we have applied the following bound
\begin{align*}
	\int_{\Omega}\left| \nabla^{\mathfrak{h}}  \mathcal{P}_n(\bm{m}_0)\right|^2{\rm d}x \leq \int_{\Omega} \left|\nabla^{\mathfrak{h}} \bm{m}_0\right|^2 {\rm d}x.
\end{align*}
The proof is completed.
\end{proof}

A direct corollary of Lemma \ref{lemma3.2} is the following estimate for $\partial_t \bm{m}_n^\varepsilon$.
\begin{lemma}\label{I_times estimate}
  Under the same assumptions of Lemma \ref{lemma3.2}, we have
    \begin{equation}
       \begin{aligned}\label{I-time_derivative}
\|\partial_{t} \boldsymbol{m}_{n}^{\varepsilon}\|_{L^{2}([0,T]; L^2(\Omega))} \leq 
 \mathfrak{C}_1.
\end{aligned}
\end{equation}
\end{lemma}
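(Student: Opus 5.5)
The plan is to read $\partial_t\bm{m}_n^\varepsilon$ directly off equation \eqref{I_projection} and bound each term in $L^2([0,T]\times\Omega)$ using Lemma \ref{lemma3.2} and the basic energy estimate \eqref{I-energy}. Since $\mathcal{P}_n$ is an orthogonal projection, $\|\mathcal{P}_n\bm{g}\|_{L^2(\Omega)}\le\|\bm{g}\|_{L^2(\Omega)}$, so we may drop $\mathcal{P}_n$ everywhere. We also use $|\mathcal{J}(\bm{u})|\le 1$ pointwise, hence $|\mathcal{J}(\bm{u})\times\bm{b}|\le|\bm{b}|$ for every vector $\bm{b}$.

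\textbf{Transport term.} We have
\[
\|\mathcal{J}\times(\mathcal{J}\times(\bm{v}\cdot\nabla^{\mathfrak{h}}\bm{m}_n^\varepsilon+\bm{v}\times\mathcal{J}))\|_{L^2(\Omega)}\le \|\bm{v}\|_{L^\infty(\Omega)}\bigl(\|\nabla^{\mathfrak{h}}\bm{m}_n^\varepsilon\|_{L^2(\Omega)}+|\Omega|^{1/2}\bigr).
\]
Squaring and integrating in time gives a bound by $\|\bm{v}\|^2_{L^2([0,T];L^\infty(\Omega))}\bigl(\sup_t\|\bm{m}_n^\varepsilon\|^2_{H^1_{\mathfrak{h}}(\Omega)}+C\bigr)$, which is controlled by \eqref{I_H^1}.

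\textbf{Field terms.} Expand $\bm{H}=\Delta^{\mathfrak{h}}\bm{m}_n^\varepsilon+\bm{\pi}(\bm{m}_n^\varepsilon)+\bm{f}$.
\begin{itemize}
\item The pieces involving $\bm{\pi}$ and $\bm{f}$ are bounded by $C_{\bm{\pi}}\|\bm{m}_n^\varepsilon\|_{L^2(\Omega)}+\|\bm{f}\|_{L^2(\Omega)}$, hence are in $L^2_tL^2_x$ by \eqref{I-energy}.
\item The principal pieces are $\alpha\,\mathcal{J}\times\Delta^{\mathfrak{h}}\bm{m}_n^\varepsilon$ and $\beta\,\mathcal{J}\times(\mathcal{J}\times\Delta^{\mathfrak{h}}\bm{m}_n^\varepsilon)$. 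Both have pointwise modulus at most $|\mathcal{J}\times\Delta^{\mathfrak{h}}\bm{m}_n^\varepsilon|$. Their $L^2_{t,x}$ norms are therefore controlled by the dissipation term $\frac{\beta}{2}\int_0^T\!\!\int_\Omega|\mathcal{J}\times\Delta^{\mathfrak{h}}\bm{m}_n^\varepsilon|^2$ in \eqref{I_H^1}, at the cost of a factor depending on $\beta^{-1}$.
\end{itemize}

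\textbf{Viscous term.} The remaining term $\varepsilon\Delta^{\mathfrak{h}}\bm{m}_n^\varepsilon$ satisfies
\[
\|\varepsilon\Delta^{\mathfrak{h}}\bm{m}_n^\varepsilon\|^2_{L^2_{t,x}}=\varepsilon\cdot\varepsilon\|\Delta^{\mathfrak{h}}\bm{m}_n^\varepsilon\|^2_{L^2_{t,x}}\le\varepsilon\,\mathfrak{C}_1 .
\]
This is uniform for $\varepsilon\le1$ by \eqref{I_H^1}.

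Summing the three bounds gives \eqref{I-time_derivative}, with $\mathfrak{C}_1$ enlarged if necessary; it still depends on the same quantities plus $C_{\bm{\pi}}$ and $|\Omega|$. The only delicate point is the principal term. One must not bound $\|\Delta^{\mathfrak{h}}\bm{m}_n^\varepsilon\|$ itself, which is only available with weight $\varepsilon$. Instead one must use that $\Delta^{\mathfrak{h}}\bm{m}_n^\varepsilon$ always enters through $\mathcal{J}\times\Delta^{\mathfrak{h}}\bm{m}_n^\varepsilon$, and this requires $\beta>0$, as in Lemma \ref{lemma3.2}.
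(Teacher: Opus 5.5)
Your proposal is correct and follows the same route the paper intends when it calls this a direct corollary of Lemma \ref{lemma3.2}: read $\partial_t\bm{m}_n^\varepsilon$ off \eqref{I_projection} and drop $\mathcal{P}_n$ by $L^2$-contractivity. You then bound the principal terms through the $\beta$-weighted quantity $\|\mathcal{J}(\bm{m}_n^\varepsilon)\times\Delta^{\mathfrak{h}}\bm{m}_n^\varepsilon\|$ and the viscous term through $\varepsilon\|\Delta^{\mathfrak{h}}\bm{m}_n^\varepsilon\|^2$ from \eqref{I_H^1}, and you rightly note that this needs $\beta>0$ and $\varepsilon\le 1$, which the paper leaves implicit.
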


Consequently, we obtain the following result.
\begin{proposition}\label{prop3.4}
Suppose that $\bm{m}_0 \in H^1(\Omega;\mathbb{S}^2)$, $ \bm{v}\in L^2_{loc} ([0,\infty); L^\infty(\Omega))$, $\bm{f}\in L^2_{loc}([0,\infty);L^2(\Omega))$. Then the projected problem \eqref{I_projection} admits a global solution $\bm{m}_n^\varepsilon$ such that for any $0<T<\infty$, $\bm{m}_n^\varepsilon\in L^\infty([0,T];H^1(\Omega)) \cap L^2([0,T];H^2(\Omega))$ and $\partial_t \bm{m}_n^\varepsilon \in L^2([0,T];L^2(\Omega))$. Moreover,  $\bm{m}_n^\varepsilon$ satisfies the a priori estimates \eqref{I_H^1} and \eqref{I-time_derivative}.
\end{proposition}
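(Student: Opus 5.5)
Proposition \ref{prop3.4} essentially collects the preceding lemmas, so the plan is to assemble them and add the one missing ingredient, the $L^2([0,T];H^2(\Omega))$ bound.

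\emph{Global existence.} The basic energy estimate \eqref{I-energy} shows that the Galerkin ODE for $\bm{g}^n$ cannot blow up: its right-hand side is locally Lipschitz because $\mathcal{J}$ is, and $\mathcal{H}_n$ is finite-dimensional. Hence $T_n=+\infty$, and $\bm{m}_n^\varepsilon$ exists on $\Omega\times[0,\infty)$. Since $\bm{m}_n^\varepsilon(t)\in\mathcal{H}_n$, each $\bm{f}_i$ satisfies $\nabla^{\mathfrak{h}}\bm{f}_i\cdot\bm{n}=0$, and each $\bm{f}_i$ is smooth by elliptic regularity, the function $\bm{m}_n^\varepsilon$ is smooth in $x$, satisfies the helical boundary condition, and is absolutely continuous in $t$. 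This justifies all the integrations by parts, in particular testing with $-\Delta^{\mathfrak{h}}\bm{m}_n^\varepsilon$ via Proposition \ref{lemma2.1}.

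\emph{Regularity.} Lemma \ref{lemma3.2} bounds $\sup_t\|\bm{m}_n^\varepsilon\|_{H^1_{\mathfrak{h}}}$, and Proposition \ref{proposition2.1} converts this into the $L^\infty([0,T];H^1(\Omega))$ bound. For the $L^2([0,T];H^2(\Omega))$ bound, combine three facts:
\begin{itemize}
\item the term $\varepsilon\int_0^T\|\Delta^{\mathfrak{h}}\bm{m}_n^\varepsilon\|_{L^2}^2\,\mathrm{d}t\le\mathfrak{C}_1$ from \eqref{I_H^1};
\item the $L^2$ bound \eqref{I-energy};
\item Proposition \ref{proposition2} (applicable since $\nabla^{\mathfrak{h}}\bm{m}_n^\varepsilon\cdot\bm{n}=0$) followed by Proposition \ref{proposition2.1} with $k=2$.
\end{itemize}
Together these give $\|\bm{m}_n^\varepsilon\|_{L^2H^2}^2\le C(\Omega)(\mathfrak{C}_1/\varepsilon+T\|\bm{m}_0\|_{L^2}^2)$. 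This bound is uniform in $n$ but not in $\varepsilon$, which is all the statement requires.

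\emph{Time derivative, Lemma \ref{I_times estimate}.} This is the main step. Test the equation with $\partial_t\bm{m}_n^\varepsilon$, or estimate the right-hand side directly in $L^2$, using that $\mathcal{P}_n$ is an $L^2$-contraction. The transport terms are bounded by $\|\bm{v}\|_{L^\infty}(|\nabla^{\mathfrak{h}}\bm{m}_n^\varepsilon|+1)$, which lies in $L^2_{t,x}$ because $\bm{v}\in L^2L^\infty$ and $\nabla^{\mathfrak{h}}\bm{m}_n^\varepsilon\in L^\infty L^2$. The lower-order terms involving $\bm{\pi}$ and $\bm{f}$ are controlled through \eqref{Cpi} and $|\mathcal{J}|\le1$.

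The obstacle is the term $\varepsilon\Delta^{\mathfrak{h}}\bm{m}_n^\varepsilon$ together with the field parts $\mathcal{J}\times\Delta^{\mathfrak{h}}\bm{m}_n^\varepsilon$ and $\mathcal{J}\times(\mathcal{J}\times\Delta^{\mathfrak{h}}\bm{m}_n^\varepsilon)$. These are handled as follows:
\begin{itemize}
\item The $\beta$-term in \eqref{I_H^1} gives $\int_0^T\|\mathcal{J}\times\Delta^{\mathfrak{h}}\bm{m}_n^\varepsilon\|_{L^2}^2\,\mathrm{d}t\le 2\mathfrak{C}_1/\beta$. Since $|\mathcal{J}|\le1$, the double cross product is also pointwise bounded by $|\mathcal{J}\times\Delta^{\mathfrak{h}}\bm{m}_n^\varepsilon|$, so both the $\alpha$- and $\beta$-parts are controlled.
\item The $\varepsilon\Delta^{\mathfrak{h}}$ term is controlled by $\varepsilon^2\int\|\Delta^{\mathfrak{h}}\bm{m}_n^\varepsilon\|^2\le\varepsilon\mathfrak{C}_1$, which is bounded for $\varepsilon\le1$.
\end{itemize}
Summing these contributions gives \eqref{I-time_derivative}, after enlarging $\mathfrak{C}_1$ by a constant depending only on the data (and on $\beta>0$). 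This completes the proof of Proposition \ref{prop3.4}.
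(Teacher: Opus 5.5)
Your proposal is correct and takes the same route as the paper. Proposition~\ref{prop3.4} is obtained there simply by assembling three ingredients: the global-existence energy bound \eqref{I-energy}, the $H^1$ estimate of Lemma~\ref{lemma3.2}, and the time-derivative bound \eqref{I-time_derivative}, which is read off from the equation using the $\beta$-dissipation term in \eqref{I_H^1} (so it needs $\beta>0$, as the paper's constant $\mathfrak{C}_1$ already does), while the $L^2([0,T];H^2(\Omega))$ membership comes from Propositions~\ref{proposition2.1} and~\ref{proposition2}. You have written out the details the paper leaves implicit, including the harmless fact that the $H^2$ bound is uniform in $n$ but degenerates like $\varepsilon^{-1}$.
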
 

\subsection{Compactness of the approximate solutions and the proof of Theorem \ref{theroem1}}

By applying Lions-Aubin-Simon compactness  (i.e., Lemma \ref{LAS}), we show that the sequence of approximate solutions $\{\bm{m}^\varepsilon_n\}$ admits a subsequence that converges to a strong solution of \eqref{I-galerkin1} as $n\to \infty$. 

In view of Propositions \ref{proposition2.1} and \ref{proposition2}, Proposition \ref{prop3.4} implies that there exists a subsequence of $\{\bm{m}_n^\varepsilon\}$ (for simplicity, we still denote it by $\{\bm{m}_n^\varepsilon\}$) such that
\begin{align}
\bm{m}_n^\varepsilon &\rightharpoonup \bm{m}^\varepsilon, \quad \text{ weakly* in } L^\infty([0,T];H^1(\Omega)), \label{eq:weakconv1} \\
\bm{m}_n^\varepsilon &\rightharpoonup \bm{m}^\varepsilon, \quad \text{ weakly in } L^2([0,T];H^2(\Omega)), \label{eq:weakconv2}\\
\p_t\bm{m}_n^\varepsilon &\rightharpoonup \p_t\bm{m}^\varepsilon,\: \text{ weakly in } L^2([0,T];L^2(\Omega)). \label{eq:weakconv3}
\end{align}

Next, let $\mathcal{X} = H^2(\Omega)$, $\mathcal{B} =H^1(\Om) $ and $\mathcal{Y} =L^2(\Omega)$.  Then Lemma \ref{LAS} yields
\begin{equation}
\bm{m}_n^\varepsilon \to \bm{m}^\varepsilon, \text{ strongly in } L^p([0,T];H^1(\Omega)), \label{eq:strongconv}
\end{equation}
for any $1\leq p<\infty$.

\begin{proposition}\label{lemma3.5}
The limit $ \bm{m}^\varepsilon $ of the sequence $\{\bm{m}_n^\varepsilon\}$ is a strong solution to the problem \eqref{I-galerkin1}, which satisfies the same a priori estimates as $ \bm{m}_n^\varepsilon $ given in \eqref{I_H^1} and \eqref{I-time_derivative}.
\end{proposition}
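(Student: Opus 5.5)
We pass to the limit $n\to\infty$ in the Galerkin system \eqref{I_projection} for fixed $\varepsilon>0$, using the convergences \eqref{eq:weakconv1}--\eqref{eq:strongconv}. Fix a test function $\bm{\phi}\in C^\infty_c((0,T);\mathcal{H}_k)$ for some fixed $k$. For $n\ge k$ we have $\mathcal{P}_n\bm{\phi}=\bm{\phi}$, and $\mathcal{P}_n$ is self-adjoint, so every projection can be moved onto $\bm{\phi}$ and dropped. The weak form of \eqref{I_projection} then reads
\[
\int_0^T\!\!\int_\Omega \big(\partial_t\bm{m}_n^\varepsilon-\varepsilon\Delta^{\mathfrak{h}}\bm{m}_n^\varepsilon\big)\cdot\bm{\phi}\,{\rm d}x{\rm d}t=\int_0^T\!\!\int_\Omega \bm{N}(\bm{m}_n^\varepsilon)\cdot\bm{\phi}\,{\rm d}x{\rm d}t,
\]
where $\bm{N}$ denotes the full nonlinear right-hand side of \eqref{I-galerkin1}. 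The linear terms on the left converge by \eqref{eq:weakconv2}--\eqref{eq:weakconv3}, since $\Delta^{\mathfrak{h}}$ is bounded from $H^2$ to $L^2$ by Proposition \ref{proposition2.1}.

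For the nonlinear terms, the key point is that $\mathcal{J}$ is globally Lipschitz with constant at most $2$ and satisfies $|\mathcal{J}|\le1$. By \eqref{eq:strongconv}, after a further subsequence, $\bm{m}_n^\varepsilon\to\bm{m}^\varepsilon$ a.e. and in $L^2([0,T];H^1)$. Hence $\mathcal{J}(\bm{m}_n^\varepsilon)\to\mathcal{J}(\bm{m}^\varepsilon)$ strongly in every $L^p(\Omega\times[0,T])$ with $p<\infty$, by dominated convergence, while remaining uniformly bounded. We then treat the terms as follows.
\begin{itemize}
\item The transport terms converge because $\nabla^{\mathfrak{h}}\bm{m}_n^\varepsilon\to\nabla^{\mathfrak{h}}\bm{m}^\varepsilon$ strongly in $L^2$ and $\bm{v}\in L^2([0,T];L^\infty)$. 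Writing the difference as a telescoping sum, each piece is either bounded times strongly convergent, or a strong-in-$L^p$ factor times a bounded one, and pairing with $\bm{\phi}\in L^\infty$ handles the products.
\item The lower-order terms $\bm{\pi}(\bm{m}_n^\varepsilon)+\bm{f}$ converge strongly in $L^2$ by \eqref{Cpi}.
\item The terms involving $\Delta^{\mathfrak{h}}\bm{m}_n^\varepsilon$ are the main obstacle, since this factor converges only weakly in $L^2$. We move the cross products onto the test function, writing for instance
\[
\int(\mathcal{J}(\bm{m}_n^\varepsilon)\times\Delta^{\mathfrak{h}}\bm{m}_n^\varepsilon)\cdot\bm{\phi}=-\int\Delta^{\mathfrak{h}}\bm{m}_n^\varepsilon\cdot(\mathcal{J}(\bm{m}_n^\varepsilon)\times\bm{\phi}),
\]
and similarly for the double cross product. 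Because $\mathcal{J}(\bm{m}_n^\varepsilon)\times\bm{\phi}\to\mathcal{J}(\bm{m}^\varepsilon)\times\bm{\phi}$ strongly in $L^2$ (bounded, a.e. convergent, and dominated), weak convergence times strong convergence gives the limit.
\end{itemize}
Density of $\bigcup_k C^\infty_c((0,T);\mathcal{H}_k)$ in $L^2([0,T];L^2(\Omega))$ then shows that $\bm{m}^\varepsilon$ satisfies \eqref{I-galerkin1} a.e. in $\Omega\times(0,T)$. Here we use that every term lies in $L^2$ (or $L^1$ with an $L^\infty$ test), with $\bm{m}^\varepsilon\in L^2H^2$ and $\partial_t\bm{m}^\varepsilon\in L^2L^2$.

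It remains to identify the boundary condition and the initial datum. Each $\bm{m}_n^\varepsilon(t)$ lies in the closed subspace $\{\bm{u}\in H^2:\nabla^{\mathfrak{h}}\bm{u}\cdot\bm{n}=0\}$, since the trace map $H^2\to H^{1/2}(\partial\Omega)$ is continuous and the eigenfunctions $\bm{f}_i$ satisfy this condition. Weak closedness of this subspace in $L^2([0,T];H^2)$ therefore gives $\nabla^{\mathfrak{h}}\bm{m}^\varepsilon\cdot\bm{n}=0$. For the initial datum, Lemma \ref{LAS}(3) with $p=\infty$, $r=2$, $\mathcal{X}=H^1$, $\mathcal{B}=\mathcal{Y}=L^2$ gives $\bm{m}_n^\varepsilon\to\bm{m}^\varepsilon$ in $C^0([0,T];L^2)$. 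Combined with $\mathcal{P}_n\bm{m}_0\to\bm{m}_0$ in $L^2$, this yields $\bm{m}^\varepsilon(0)=\bm{m}_0$.

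Finally, the estimates \eqref{I_H^1} and \eqref{I-time_derivative} pass to the limit by weak (and weak*) lower semicontinuity of norms, applied to $\nabla^{\mathfrak{h}}\bm{m}_n^\varepsilon$, $\Delta^{\mathfrak{h}}\bm{m}_n^\varepsilon$ and $\partial_t\bm{m}_n^\varepsilon$. For the term $\mathcal{J}(\bm{m}_n^\varepsilon)\times\Delta^{\mathfrak{h}}\bm{m}_n^\varepsilon$, the same weak-strong argument shows that it converges weakly in $L^2$ to $\mathcal{J}(\bm{m}^\varepsilon)\times\Delta^{\mathfrak{h}}\bm{m}^\varepsilon$, so lower semicontinuity applies to it as well. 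For the supremum term, lower semicontinuity is applied at each fixed $t$, using the $C^0L^2$ convergence together with the uniform $H^1$ bound.
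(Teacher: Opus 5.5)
Your proposal is correct and follows essentially the same route as the paper. Both test the Galerkin system against functions lying in a fixed $\mathcal{H}_k$ (the paper uses $\mathcal{P}_m\bm{\varphi}$ and lets $n\to\infty$, then $m\to\infty$). Both pass to the limit by pairing the weakly convergent $\Delta^{\mathfrak{h}}\bm{m}_n^\varepsilon$ with strongly convergent factors, then recover the boundary condition and obtain the estimates by lower semicontinuity. Your deviations are valid and fill in details the paper leaves implicit: you obtain $\nabla^{\mathfrak{h}}\bm{m}^\varepsilon\cdot\bm{n}=0$ from weak closedness of the kernel of the continuous trace map on $H^2$, rather than from the limiting identity $\int\Delta^{\mathfrak{h}}\bm{m}^\varepsilon\cdot\bm{\varphi}=-\int\nabla^{\mathfrak{h}}\bm{m}^\varepsilon\cdot\nabla^{\mathfrak{h}}\bm{\varphi}$, and you explicitly verify $\bm{m}^\varepsilon(0)=\bm{m}_0$ via $C^0([0,T];L^2)$ compactness.
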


\begin{proof}
For any vector function $\bm{\varphi}  \in C^\infty(\bar{\Omega} \times [0,T]) $, we denote $\bm{\varphi}_m=\P_m(\bm{\varphi})$. Then for any $n\geq m$, the approximate solution $ \bm{m}_n^\varepsilon $ satisfies
\begin{align*}
 \int_0^T \int_\Omega  \partial_t \bm{m}_n^\varepsilon\cdot\bm{\varphi}_m {\rm d}x{\rm d}t 
&= \gamma  \int_0^T \int_\Omega
          \left( 
              \mathcal{J}(\bm{m}_n^\varepsilon)
              \times ( \mathcal{J}(\bm{m}_n^\varepsilon) 
              \times 
              (\bm{v} \cdot \nabla^{\mathfrak{h}} \bm{m}_n^\varepsilon
             +\bm{v} \times \mathcal{J}(\bm{m}_n^\varepsilon)) )
      \right)\cdot \bm{\varphi}_m  {\rm d}x{\rm d}t
\\
&\quad -\alpha \int_0^T \int_\Omega (\mathcal{J}(\bm{m}_n^\varepsilon) \times \bm{H}(\bm{m}_n^\varepsilon) )\cdot \bm{\varphi}_m  {\rm d}x{\rm d}t 
+ \varepsilon \int_0^T \int_\Omega  \Delta^{\mathfrak{h}} \bm{m}_n^\varepsilon\cdot \bm{\varphi}_m  {\rm d}x{\rm d}t
\\
&\quad -
\beta \int_0^T \int_\Omega  \left(\mathcal{J}(\bm{m}_n^\varepsilon) \times   (\mathcal{J}(\bm{m}_n^\varepsilon) \times  \bm{H}(\bm{m}_n^\varepsilon) )\right)\cdot \bm{\varphi}_m  {\rm d}x{\rm d}t.
\end{align*}

Based on the convergences  \eqref{eq:weakconv1}-\eqref{eq:strongconv}, we can show the following convergences 
\begin{align*}
\partial_t \bm{m}_n^\varepsilon &\rightharpoonup \partial_t \bm{m}^\varepsilon  &&\text{ weakly  in } L^2([0,T];L^2(\Omega)),
\\
\Delta^{\mathfrak{h}} \bm{m}_n^\varepsilon &\rightharpoonup\Delta^{\mathfrak{h}} \bm{m}^\varepsilon  &&\text{ weakly in } L^2([0,T];L^2(\Omega)),
\\
\mathcal{J}(\bm{m}_n^\varepsilon) &\rightarrow \mathcal{J}(\bm{m}^\varepsilon)  &&\text{ a.e. } (x,t) \in \Omega \times [0,T],
\\
\mathcal{J}(\bm{m}_n^\varepsilon) \times  \Delta^{\mathfrak{h}} \bm{m}_n^\varepsilon &\rightharpoonup \mathcal{J}(\bm{m}^\varepsilon)\times\Delta^{\mathfrak{h}} \bm{m}^\varepsilon  &&\text{ weakly in } L^2([0,T];L^2(\Omega)).
\end{align*}
Moreover, since $\bm{v}\in L^2([0,T];L^\infty(\Om))$, we also derive that
\begin{align*}
\mathcal{J}(\bm{m}_n^\varepsilon) \times( \bm{v} \cdot \nabla^{\mathfrak{h}} \bm{m}_n^\varepsilon) &\rightharpoonup \mathcal{J}(\bm{m}^\varepsilon) \times( \bm{v} \cdot \nabla^{\mathfrak{h}} \bm{m}^\varepsilon)  &&\text{ weakly in } L^1([0,T];L^2(\Omega)), 
\\
\mathcal{J}(\bm{m}_n^\varepsilon) \times( \bm{v} \times \mathcal{J}(\bm{m}_n^\varepsilon)) &\rightharpoonup \mathcal{J}(\bm{m}^\varepsilon) \times( \bm{v} \times \mathcal{J}(\bm{m}^\varepsilon)) &&\text{ weakly in } L^2([0,T];L^2(\Omega)).
\end{align*}
By taking $ n\to \infty$ and then $m\to \infty$, we obtain the following identity
\begin{align*}
 \int_0^T \int_\Omega  \partial_t \bm{m}^\varepsilon\cdot\bm{\varphi}  {\rm d}x{\rm d}t 
&= \gamma  \int_0^T \int_\Omega  
          \left( 
              \mathcal{J}(\bm{m}^\varepsilon)
              \times ( \mathcal{J}(\bm{m}^\varepsilon) 
              \times 
              (\bm{v} \cdot \nabla^{\mathfrak{h}} \bm{m}^\varepsilon
             + \bm{v} \times \mathcal{J}(\bm{m}^\varepsilon)) )
      \right)\cdot \bm{\varphi}  {\rm d}x{\rm d}t
\\
&\quad -\alpha \int_0^T \int_\Omega  (\mathcal{J}(\bm{m}^\varepsilon) \times \bm{H}(\bm{m}^\varepsilon)) )\cdot \bm{\varphi}  {\rm d}x{\rm d}t 
+ \varepsilon \int_0^T \int_\Omega  \Delta^{\mathfrak{h}} \bm{m}^\varepsilon\cdot \bm{\varphi}  {\rm d}x{\rm d}t
\\
&\quad -
\beta \int_0^T \int_\Omega  \left(\mathcal{J}(\bm{m}^\varepsilon) \times   (\mathcal{J}(\bm{m}^\varepsilon) \times  \bm{H}(\bm{m}^\varepsilon) )\right)\cdot \bm{\varphi}  {\rm d}x{\rm d}t.
\end{align*}

Next, we check the  boundary condition.  For any $  \bm{\varphi} \in C^\infty(\bar{\Omega} \times [0,T]) $, we find 
\[
\int_0^T \int_\Omega  \Delta^{\mathfrak{h}} \bm{m}_n^\varepsilon\cdot \bm{\varphi}  {\rm d}x{\rm d}t = -\int_0^T \int_\Omega  \nabla^{\mathfrak{h}}  \bm{m}_n^\varepsilon\cdot  \nabla^{\mathfrak{h}}  \bm{\varphi} {\rm d}x{\rm d}t
.\]
This immediately yields, as $n \rightarrow +\infty$, 
\[
\int_0^T \int_\Omega  \Delta^{\mathfrak{h}}  \bm{m}^\varepsilon\cdot \bm{\varphi}  {\rm d}x{\rm d}t = -\int_0^T \int_\Omega   \nabla^{\mathfrak{h}}   \bm{m}^\varepsilon\cdot  \nabla^{\mathfrak{h}}   \bm{\varphi} {\rm d}x{\rm d}t,
\]
which  implies  that $ \nabla^{\mathfrak{h}}  \bm{m}^\varepsilon \cdot{ \bm{n}}|_{\partial \Omega \times [0,T]} = 0 $. Consequently, $ \bm{m}^\varepsilon $ is a strong solution to the problem \eqref{I-galerkin1}.

Finally, when $n\to \infty$, the lower semi-continuity of \eqref{I_H^1} and \eqref{I-time_derivative} yields the desired estimates of $\bm{m}^\ep$. 
The proof is completed.
\end{proof}

By applying a similar approach to that in \cite{Chen2024Incompressible}, we derive the following $L^\infty$-estimate of $\bm{m}^\ep$.
\begin{lemma}
\label{I-lemma:max_principle}
Let $\bm{m}^\varepsilon$ be the  solution given in Proposition  \ref{lemma3.5}, which satisfies  the following  approximate system
\begin{equation}\label{I-maximum1}
\begin{cases}
\begin{aligned}
\partial_t \bm{m}^\varepsilon -\varepsilon \Delta^{\mathfrak{h}} \bm{m}^\varepsilon
=&
    \gamma \left(\mathcal{J}(\bm{m}^\varepsilon) \times \left(\mathcal{J}(\bm{m}^\varepsilon)\times (\bm{v} \cdot \nabla^{\mathfrak{h}} \bm{m}^\varepsilon) \right)\right)
    \\
    &  + \gamma \left(\mathcal{J}(\bm{m}^\varepsilon) \times (\mathcal{J}(\bm{m}^\varepsilon) \times( \bm{v} \times \mathcal{J}(\bm{m}^\varepsilon)))\right)
           \\
      &
 -\alpha \mathcal{J}(\bm{m}^\varepsilon) \times  
           \bm{H}(\bm{m}^\varepsilon) 
         \\
         &- \beta  \mathcal{J}(\bm{m}^\varepsilon) \times
      \left( 
           \mathcal{J}(\bm{m}^\varepsilon) \times    \bm{H}(\bm{m}^\varepsilon) 
      \right),
\end{aligned}
& {\rm in } \: \Omega \times [0,\infty),
      \\
      \nabla^{\mathfrak{h}}  \bm{m}^\varepsilon \cdot\bm{n} = 0,  & { \rm on }\:  \partial \Omega \times [0,\infty),  
      \\
    \bm{m}^\varepsilon(\cdot,0)=\bm{m}_{0}, &{\rm in }\: \Omega.
\end{cases}
\end{equation}
Then $|\bm{m}^\varepsilon| \leq 1$ for almost every $(x,t) \in \Omega \times [0,T]$ and any $T < \infty$.
\end{lemma}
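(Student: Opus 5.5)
Following the approach of \cite{Chen2024Incompressible}, I would prove $|\bm{m}^\varepsilon|\le 1$ by a Stampacchia-type truncation argument applied to the scalar quantity $w:=|\bm{m}^\varepsilon|^2-1$. The key structural fact is that every term on the right-hand side of \eqref{I-maximum1} has the form $\mathcal{J}(\bm{m}^\varepsilon)\times \bm{b}$ for some vector $\bm{b}$. Since $\mathcal{J}(\bm{m}^\varepsilon)$ is parallel to $\bm{m}^\varepsilon$, each such term is pointwise orthogonal to $\bm{m}^\varepsilon$. This is the same cancellation already used in \eqref{I-energy+1}. Hence, taking the dot product of the equation with $\bm{m}^\varepsilon$ removes all nonlinear terms, and the only surviving contribution is the viscous term $\varepsilon\Delta^{\mathfrak{h}}\bm{m}^\varepsilon\cdot\bm{m}^\varepsilon$.

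\textbf{Step 1: Helical identities.} I would use \eqref{partialD} to compute
\[
\partial_i^{\mathfrak{h}}\bm{u}\cdot\bm{u}=\partial_i\bm{u}\cdot\bm{u},
\]
because $(\bm{e}_i\times\bm{u})\cdot\bm{u}=0$. This gives $\nabla^{\mathfrak{h}}\bm{u}\cdot\bm{u}=\tfrac12\nabla|\bm{u}|^2$. Consequently, the boundary condition $\nabla^{\mathfrak{h}}\bm{m}^\varepsilon\cdot\bm{n}=0$ implies $\partial_{\bm{n}}|\bm{m}^\varepsilon|^2=0$ on $\partial\Omega$. It also yields the pointwise identity
\[
\Delta^{\mathfrak{h}}\bm{u}\cdot\bm{u}=\tfrac12\Delta|\bm{u}|^2-|\nabla^{\mathfrak{h}}\bm{u}|^2 .
\]
This follows from $\partial_i(\partial_i^{\mathfrak{h}}\bm{u}\cdot\bm{u})=\partial_i^{\mathfrak{h}}\partial_i^{\mathfrak{h}}\bm{u}\cdot\bm{u}+|\partial_i^{\mathfrak{h}}\bm{u}|^2$, where the cross-product terms drop out by the same orthogonality. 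Combining this with Step 2 gives, at least formally,
\[
\tfrac12\partial_t |\bm{m}^\varepsilon|^2-\tfrac{\varepsilon}{2}\Delta|\bm{m}^\varepsilon|^2=-\varepsilon|\nabla^{\mathfrak{h}}\bm{m}^\varepsilon|^2\le 0 .
\]
The function $|\bm{m}^\varepsilon|^2$ is therefore a subsolution of the heat equation with homogeneous Neumann data.

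\textbf{Step 2: Truncated energy estimate.} To make this rigorous at the regularity of Proposition \ref{lemma3.5}, I would avoid pointwise second derivatives of $|\bm{m}^\varepsilon|^2$. Instead I would test the equation with $\bm{\phi}=(|\bm{m}^\varepsilon|^2-1)^+\bm{m}^\varepsilon$, which lies in $L^2([0,T];H^1)$ because $\bm{m}^\varepsilon\in L^\infty H^1\cap L^2H^2\subset L^2 L^\infty$. The orthogonality kills all the $\mathcal{J}$-terms. For the viscous term, the integration by parts formula in Proposition \ref{lemma2.1}, together with the boundary condition, gives
\[
-\varepsilon\int_\Omega\nabla^{\mathfrak{h}}\bm{m}^\varepsilon\cdot\nabla^{\mathfrak{h}}\bigl(w^+\bm{m}^\varepsilon\bigr)\,dx
=-\varepsilon\int_\Omega w^+|\nabla^{\mathfrak{h}}\bm{m}^\varepsilon|^2\,dx-\tfrac{\varepsilon}{2}\int_\Omega|\nabla w^+|^2\,dx .
\]
Here I use the Leibniz-type identity $\nabla^{\mathfrak{h}}(w^+\bm{m})=\nabla w^+\otimes\bm{m}+w^+\nabla^{\mathfrak{h}}\bm{m}$ and Step 1. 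The left-hand side becomes $\tfrac14\frac{d}{dt}\int_\Omega (w^+)^2\,dx$. I would justify this via the chain rule for $\partial_t\bm{m}^\varepsilon\in L^2L^2$, after approximating in time if necessary.

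\textbf{Step 3: Conclusion and main obstacle.} Step 2 gives $\frac{d}{dt}\int_\Omega (w^+)^2\,dx\le 0$. Since $|\bm{m}_0|=1$, we have $w^+(0)=0$, so $w^+\equiv0$, that is, $|\bm{m}^\varepsilon|\le1$ a.e. The main obstacle I expect is the justification in Step 2. The product $w^+\bm{m}^\varepsilon$ must be an admissible test function with enough integrability: in 3D, $|\bm{m}|^2\nabla\bm{m}$ requires $\bm{m}\in L^\infty$ a.e. in time, which is provided by $H^2\subset L^\infty$ for a.e. $t$. The time-derivative chain rule also needs care. If needed, I would first truncate with $\min\{w^+,k\}$ and pass $k\to\infty$.
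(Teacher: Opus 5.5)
Your proposal is correct and follows essentially the paper's argument. The paper works with $\Phi(t)=\frac12\int_\Omega((|\bm{m}^\varepsilon|-1)_+)^2\,\mathrm{d}x$, i.e.\ it tests with $\bigl(1-|\bm{m}^\varepsilon|^{-1}\bigr)_+\bm{m}^\varepsilon$, and uses the same two ingredients you use: orthogonality of all $\mathcal{J}(\bm{m}^\varepsilon)\times$ terms to $\bm{m}^\varepsilon$, and helical integration by parts with $\bm{m}\cdot\partial_i^{\mathfrak{h}}\bm{m}=\bm{m}\cdot\partial_i\bm{m}$ and the chiral boundary condition, which makes the viscous contribution nonpositive. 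Your truncation of $|\bm{m}^\varepsilon|^2-1$ instead is only a cosmetic variant: your test function is polynomial in $\bm{m}^\varepsilon$ and needs the $H^2$-regularity you invoke, whereas the paper's test function is pointwise bounded by $|\bm{m}^\varepsilon|$ and so already lies in $H^1$ whenever $\bm{m}^\varepsilon$ does.
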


\begin{proof}
We consider the auxiliary function
\[
\Phi(t) = \frac{1}{2} \int_{\Om} ((|\bm{m}^\varepsilon| - 1)_+)^2 \, {\rm d}x,
\]
where $(|\bm{m}^\varepsilon| - 1)_+ := \max\{|\bm{m}^\varepsilon| - 1, 0 \}$.
Differentiating $\Phi(t)$ with respect to time  and employing the equation \eqref{I-maximum1} yield
\begin{align*}
    \frac{\mathrm{d} \Phi}{{\rm d}t} 
    &= 
    \int_\Omega  (|\bm{m}^\varepsilon| - 1)_+  
     \partial_t (|\bm{m}^\varepsilon| - 1)_+  \, {\rm d}x
 = \int_{\{|\bm{m}^\varepsilon| > 1\}} (|\bm{m}^\varepsilon| - 1) \partial_t |\bm{m}^\varepsilon| \, {\rm d}x
\\
&=\varepsilon \int_{\{|\bm{m}^\varepsilon| > 1\}} 
    \left(\frac{|\bm{m}^\varepsilon| - 1}{|\bm{m}^\varepsilon|}\right) \bm{m}^\varepsilon \cdot\Delta^\mathfrak{h} \bm{m}^\varepsilon 
{\rm d}x\\
&=-\int_{\{|\bm{m}^\varepsilon| > 1\}} \nabla^{\mathfrak{h}} \left( \frac{|\bm{m}^\varepsilon| - 1}{|\bm{m}^\varepsilon|} \bm{m}^\varepsilon \right) \cdot \nabla^{\mathfrak{h}} \bm{m}^\varepsilon  {\rm d}x
\\
&= - \int_{\{|\bm{m}^\varepsilon| > 1\}}
 \frac{|\bm{m}^\varepsilon| - 1}{|\bm{m}^\varepsilon|} |\nabla^{\mathfrak{h}} \bm{m}^\varepsilon|^2   -\sum_{i=1}^{3} \partial_i |\bm{m}^\varepsilon|^{-1}\left( \bm{m}^\varepsilon \cdot\partial_i^{\mathfrak{h}} \bm{m}^\varepsilon \right) {\rm d}x 
\\
&= - \int_{\{|\bm{m}^\varepsilon| > 1\}} 
  \frac{|\bm{m}^\varepsilon| - 1}{|\bm{m}^\varepsilon|} |\nabla^{\mathfrak{h}} \bm{m}^\varepsilon|^2  
  + 
  \sum_{i=1}^{3}  \left( |\bm{m}^\varepsilon|^{-3}\left|\bm{m}^\varepsilon \cdot\partial_i^{\mathfrak{h}} \bm{m}^\varepsilon\right|^2 \right){\rm d}x
\leq 0.
\end{align*}

Therefore, $\Phi(t)$ is a decreasing non-negative function. Since the initial data $|\bm{m}_0|=1$, i.e., $ \Phi(0)=0$, we have $\Phi (t) \equiv 0, \forall t >0$, namely $|\bm{m}^\varepsilon|\leq 1$ a.e. in $\Omega \times [0,T]$. The proof is completed.
\end{proof}

Moreover, the bound $|\bm{m}^\varepsilon|\leq 1$ implies $ \mathcal{J}(\bm{m}^\varepsilon) = \bm{m}^\varepsilon $. Hence, $\bm{m}^\ep$ solves the following system
\begin{equation}\label{I-approxi_2}
\begin{cases}
\begin{aligned}
    \partial_t \bm{m}^\varepsilon -\varepsilon \Delta^{\mathfrak{h}} \bm{m}^\varepsilon 
&= 
    \gamma
    \left(\bm{m}^\varepsilon \times (\bm{m}^\varepsilon \times (\bm{v} \cdot \nabla^{\mathfrak{h}} \bm{m}^\varepsilon +\bm{v} \times  \bm{m}^\varepsilon) )\right)
      \\
      &\quad -\alpha \bm{m}^\varepsilon \times  
           \bm{H}(\bm{m}^\varepsilon) 
           - \beta  \bm{m}^\varepsilon\times
      \left( 
           \bm{m}^\varepsilon \times    \bm{H}(\bm{m}^\varepsilon)
      \right),
\end{aligned} & \text{in } \Omega \times [0,\infty),
      \\
      \nabla^{\mathfrak{h}}  \bm{m}^\varepsilon \cdot\bm{n} = 0,  & \text{on } \partial \Omega \times [0,\infty),  
      \\
    \bm{m}^\varepsilon(\cdot,0) = \bm{m}_{0}, &\text{in }\Omega.
\end{cases}
\end{equation}

With the aid of the above results, we now prove Theorem \ref{theroem1}.

\begin{proof}[\textbf{Proof of Theorem \ref{theroem1}.}]
The proof is divided into two steps.

\noindent\textbf{Step 1: The convergence of $\bm{m}^\varepsilon$.} For any $T < \infty$, Proposition \ref{lemma3.5} implies that
\[
\|\bm{m}^\varepsilon\|_{L^{\infty}([0,T];H^{1}(\Omega))} +\|\bm{m}^\varepsilon\times 
     \Delta^{\mathfrak{h}} \bm{m}^\varepsilon \|_{L^{2}([0,T];L^2(\Omega))} 
+ \|\partial_t \bm{m}^\varepsilon\|_{L^{2}([0,T];L^2(\Omega))} \leq \mathfrak{C}_1,
\]
for some constant $\mathfrak{C}_1$ independent of $\varepsilon$.
Then, there exist a subsequence of $\{\bm{m}^\ep\}$ (still denoted by $\{\bm{m}^\ep\}$ for simplicity) and a map $\bm{m} \in L^{\infty}([0,T];L^{2}(\Omega))$ such that
\begin{align*}
\bm{m}^\varepsilon &\rightharpoonup \bm{m}, &&\text{weakly* in } L^{\infty}([0,T];H^{1}(\Omega)),
 \\
\partial_t  \bm{m}^\varepsilon &\rightharpoonup \partial_t  \bm{m}, &&\text{weakly in } L^{2}([0,T];L^{2}(\Omega)),
 \\
\nabla^\mathfrak{h}  \bm{m}^\varepsilon &\rightharpoonup \nabla^\mathfrak{h}  \bm{m}, &&\text{weakly in } L^{2}([0,T];L^{2}(\Omega)),\\
\bm{m}^\varepsilon\times 
\nabla^{\mathfrak{h}} \bm{m}^\varepsilon
&\rightharpoonup  
\bm{m} \times 
\nabla^{\mathfrak{h}} \bm{m},
&&\text{weakly in } L^{2}([0,T];L^{2}(\Omega)),
\end{align*}
as $\ep\to 0$. Here we have used the bound $|\bm{m}^\ep|\leq 1$.

Let $\mathcal{X} = H^{1}(\Omega)$, $\mathcal{B} =L^{2}(\Omega)$, $\mathcal{Y} =L^2(\Omega)$. We conclude from Lemma \ref{LAS}  that 
\[
\bm{m}^\varepsilon \to \bm{m}, \quad \text{strongly in } C^{0}([0,T];L^2(\Omega)), \text{ as } \varepsilon \to 0.
\]

Furthermore, we derive  $|\bm{m}| = 1$. Precisely, by multiplying the equation in  \eqref{I-approxi_2} by the test function $\bm{m}^\varepsilon$   and using the fact that $|\bm{m}_0| = 1$, we obtain
\[
\int_{\Omega} |\bm{m}^\varepsilon|^{2}{\rm d}x + 2\varepsilon \int_{0}^{T} \int_{\Omega} |\nabla^{\mathfrak{h}} \bm{m}^\varepsilon|^{2}{\rm d}x{\rm d}t = \int_{\Omega} |\bm{m}_0|^{2}{\rm d}x,
\]
for a.e. $t \in [0,T]$. As $\varepsilon \to 0$, there holds that
\[
\int_{\Omega} (|\bm{m}|^{2} - 1){\rm d}x = 0.
\]
Since $|\bm{m}|\leq 1$, this implies $|\bm{m}| = 1$ for a.e. $(x,t) \in \Omega \times [0,T]$.

\medskip
\noindent\textbf{Step 2: Global existence of weak solutions to \eqref{system1}.} For any $\bm{\varphi} \in C^{\infty}(\bar{\Omega} \times [0,T])$, the solution $\bm{m}^\varepsilon$ satisfies the following equation
\begin{align*}
 &\int_{0}^{T} \int_{\Omega} \left( \partial_t \bm{m}^\varepsilon\cdot \bm{\varphi} \right) \:{\rm d}x{\rm d}t  
-\gamma \int_{0}^{T} \int_{\Omega} 
\bm{m}^\varepsilon\times(\bm{m}^\varepsilon\times (\bm{v} \cdot \nabla^{\mathfrak{h}} \bm{m}^\varepsilon+\bm{v} \times  \bm{m}^\varepsilon))
\cdot \bm{\varphi} \:{\rm d}x{\rm d}t\\
=& \varepsilon \int_{0}^{T} \int_{\Omega} \left( \Delta^\mathfrak{h} \bm{m}^\varepsilon\cdot \bm{\varphi} \right)\: {\rm d}x{\rm d}t
- \beta \int_{0}^{T} \int_{\Omega} \left( \bm{m}^\varepsilon \times ( \bm{m}^\varepsilon \times (\bm{\pi}(\bm{m}^\varepsilon)+ \bm{f}) )\right)\cdot \bm{\varphi} \:{\rm d}x{\rm d}t
\\
&- \beta \int_{0}^{T} \int_{\Omega} \left( \bm{m}^\varepsilon \times ( \bm{m}^\varepsilon \times \Delta^\mathfrak{h} \bm{m}^\varepsilon)\right)\cdot \bm{\varphi} \:{\rm d}x{\rm d}t
- \alpha \int_{0}^{T} \int_{\Omega} \left( \bm{m}^\varepsilon \times \Delta^\mathfrak{h} \bm{m}^\varepsilon\right)\cdot \bm{\varphi}\:{\rm d}x{\rm d}t
\\
& 
- \alpha \int_{0}^{T} \int_{\Omega} \left( \bm{m}^\varepsilon \times (\bm{\pi}(\bm{m}^\varepsilon)+ \bm{f}) \right)\cdot \bm{\varphi}\: {\rm d}x{\rm d}t .
\end{align*}

By passing $\varepsilon \to 0$, we apply the convergence results for $\bm{m}^\varepsilon$ in Step 1 to show
\begin{align*}
\int_{0}^{T} \int_{\Omega} \left( \partial_t \bm{m}^\varepsilon\cdot \bm{\varphi} \right) {\rm d}x{\rm d}t\to& 
 \int_{0}^{T} \int_{\Omega} \left( \partial_t \bm{m}\cdot \bm{\varphi} \right) \:{\rm d}x{\rm d}t,\\
\varepsilon \int_{0}^{T} \int_{\Omega} \left( \Delta^\mathfrak{h} \bm{m}^\varepsilon\cdot \bm{\varphi} \right) {\rm d}x{\rm d}t 
=& -\varepsilon \int_{0}^{T} \int_{\Omega} \left( \nabla^\mathfrak{h} \bm{m}^\varepsilon\cdot \nabla \bm{\varphi} \right)\:{\rm d}x{\rm d}t \to 0,
\\
 -\gamma \int_{0}^{T} \int_{\Omega} 
 \bm{m}^\varepsilon\times(\bm{m}^\varepsilon\times (\bm{v} \cdot \nabla^{\mathfrak{h}} \bm{m}^\varepsilon+\bm{v} \times  \bm{m}^\varepsilon))
 \cdot \bm{\varphi}\:{\rm d}x{\rm d}t
        \to &
         \:\gamma\int_{0}^{T} \int_{\Omega} 
              (\bm{v} \cdot \nabla^{\mathfrak{h}} \bm{m} +\bm{v} \times  \bm{m} )\cdot \bm{\varphi} \:{\rm d}x{\rm d}t,
\end{align*}
and 
\begin{align*}
-\alpha \int_{0}^{T} \int_{\Omega} \left( \bm{m}^\varepsilon \times \Delta^\mathfrak{h} \bm{m}^\varepsilon\right)\cdot \bm{\varphi}\: {\rm d}x{\rm d}t =& \alpha\int_{0}^{T} \int_{\Omega} \left( \bm{m}^\varepsilon \times \nabla^\mathfrak{h} \bm{m}^\varepsilon\right)\cdot\nabla^\mathfrak{h} \bm{\varphi}\:{\rm d}x{\rm d}t 
\\
\to &\alpha\int_{0}^{T} \int_{\Omega} \left( \bm{m} \times \nabla^\mathfrak{h} \bm{m}\right) \cdot \nabla^\mathfrak{h} \bm{\varphi} \:{\rm d}x{\rm d}t.
\end{align*}

On the other hand, since $\bm{\pi}: L^2(\Om)\to L^2(\Om)$ is a bounded self-adjoint operator and $\bm{f}\in L^2(\Om\times [0,T])$, when $\ep\to 0$, we also have
\begin{align*}
     -\alpha\int_{0}^{T} \int_{\Omega} \left( \bm{m}^\varepsilon \times (\bm{\pi}(\bm{m}^\varepsilon)+ \bm{f}) \right)\cdot \bm{\varphi} \:{\rm d}x{\rm d}t
     \to & 
     -\alpha\int_{0}^{T} \int_{\Omega} \left( \bm{m} \times (\bm{\pi}(\bm{m})+ \bm{f}) \right)\cdot \bm{\varphi}\: {\rm d}x{\rm d}t,
     \end{align*}
     and 
     \begin{align*}
    -\beta  \int_{0}^{T} \int_{\Omega} \left( \bm{m}^\varepsilon \times ( \bm{m}^\varepsilon \times (\bm{\pi}(\bm{m}^\varepsilon)+ \bm{f}) )\right)\cdot \bm{\varphi} \:{\rm d}x{\rm d}t
    &=
    \beta  \int_{0}^{T} \int_{\Omega} \left( \bm{m}^\varepsilon \times (\bm{\pi}(\bm{m}^\varepsilon)+ \bm{f}) \right)\cdot (\bm{m}^\varepsilon \times \bm{\varphi})\: {\rm d}x{\rm d}t
    \\
    &\to  
    \beta\int_{0}^{T} \int_{\Omega} \left(  \bm{m} \times (\bm{\pi}(\bm{m})+ \bm{f}) \right)\cdot  (\bm{m} \times\bm{\varphi} )\:{\rm d}x{\rm d}t.
\end{align*}

It remains to establish the convergence of the following term
\begin{align*}
\quad -\beta\int_{0}^{T} \int_{\Omega} \left(\bm{m}^\varepsilon\times ( \bm{m}^\varepsilon \times \Delta^\mathfrak{h} \bm{m}^\varepsilon)\right) \cdot \bm{\varphi} {\rm d}x{\rm d}t 
	=&\beta\int_{0}^{T} \int_{\Omega} 
	\left(\bm{m}^\varepsilon \times \bm{\varphi}
	\right)
	\cdot
	(\bm{m}^\varepsilon \times \De^\mathfrak{h} \bm{m}^\varepsilon)
	{\rm d}x{\rm d}t\\
	=&-\beta\int_{0}^{T} \int_{\Omega} 
\n^\mathfrak{h}\left(\bm{m}\times \bm{\varphi}
	\right)
	\cdot
	(\bm{m}^\varepsilon \times \n^\mathfrak{h} \bm{m}^\varepsilon)
	{\rm d}x{\rm d}t+\mathcal{O}(\varepsilon)\\
	\to& -\beta\int_{0}^{T} \int_{\Omega} 
	\n^\mathfrak{h}\left(\bm{m}\times \bm{\varphi}
	\right)
	\cdot
	(\bm{m} \times \n^\mathfrak{h} \bm{m})
	\:{\rm d}x{\rm d}t,
\end{align*}
where we have applied the following estimate:
\begin{align*}
\left|\int_{0}^{T} \int_{\Omega} 
\left(\left(\bm{m}^\varepsilon \times \bm{\varphi}
\right)-\left(\bm{m}\times \bm{\varphi}\right)\right)
\cdot
(\bm{m}^\varepsilon \times \De^\mathfrak{h} \bm{m}^\varepsilon)
\:{\rm d}x{\rm d}t \right|
\leq &\mathfrak{C}_1\norm{\bm{\varphi}}_{L^\infty(\Om)}\norm{\bm{m}^\varepsilon-\bm{m}}_{C^{0}([0,T];L^2(\Omega))}\\
=&\mathcal{O}(\varepsilon)\to 0.
\end{align*}

To summarize the above convergence results, we conclude that the limiting map $\bm{m}$ satisfies the  equation
\begin{equation}\label{weakI.1}
\begin{aligned}
&\int_{0}^{T} \int_{\Omega} 
D_t^\mathfrak{h} \bm{m} \cdot \bm{\varphi}
\:{\rm d}x{\rm d}t\\
=&\int_{0}^{T} \int_{\Omega}  \left( \partial_t\bm{m}\cdot  \bm{\varphi} \right) \:{\rm d}x{\rm d}t 
+
 \gamma\int_{0}^{T} \int_{\Omega} 
    \left(
             \bm{v} \cdot \nabla^{\mathfrak{h}} \bm{m} +(\bm{v} \times  \bm{m} )
            \right)\cdot \bm{\varphi} \:{\rm d}x{\rm d}t\\
=&\alpha\int_{0}^{T} \int_{\Omega} \left(( \bm{m} \times \nabla^\mathfrak{h}  \bm{m})\cdot \nabla^\mathfrak{h}  \bm{\varphi} \right) \:{\rm d}x{\rm d}t
        +\beta\int_{0}^{T} \int_{\Omega} \left(  \bm{m} \times (\bm{\pi}(\bm{m})+ \bm{f}) \right)\cdot  (\bm{m} \times\bm{\varphi} )\:{\rm d}x{\rm d}t 
\\
&-\beta\int_{0}^{T} \int_{\Omega} 
        \nabla^\mathfrak{h}\left(\bm{m} \times \bm{\varphi}
            \right)
            \cdot
             (\bm{m} \times \nabla^\mathfrak{h} \bm{m})
             \:{\rm d}x{\rm d}t 
             -\alpha\int_{0}^{T} \int_{\Omega} \left( \bm{m} \times (\bm{\pi}(\bm{m})+ \bm{f}) \right)\cdot \bm{\varphi}\: {\rm d}x{\rm d}t
\end{aligned}
\end{equation}
for any $\bm{\varphi} \in C^{\infty}(\bar{\Omega} \times [0,T])$ and any $T < \infty$.

Replacing $\bm{ \varphi}$ by $ -\bm{m} \times \bm{ \varphi}$ in the above equation \eqref{weakI.1}, we obtain 
\begin{equation}\label{weakI.2}
\begin{aligned}
	&\quad 
	\int_{0}^{T} \int_{\Omega}  (\bm{m} \times D_t^\mathfrak{h}\bm{m})\cdot  \bm{\varphi} \:{\rm d}x{\rm d}t 
	\\
	&= 
    \alpha\int_{0}^{T} \int_{\Omega} \left( \bm{m} \times (\bm{\pi}(\bm{m})+ \bm{f}) \right)\cdot ( \bm{m} \times\bm{\varphi} ){\rm d}x{\rm d}t
	-\alpha\int_{0}^{T} \int_{\Omega} ( \bm{m} \times \nabla^\mathfrak{h}  \bm{m})\cdot\nabla^\mathfrak{h}  (\bm{m} \times \bm{ \varphi} ){\rm d}x{\rm d}t
	\\
	&\quad +\beta\int_{0}^{T} \int_{\Omega} \left(  \bm{m} \times (\bm{\pi}(\bm{m})+ \bm{f}) \right)\cdot  \bm{\varphi} \:{\rm d}x{\rm d}t
    -\beta\int_{0}^{T} \int_{\Omega} 
	\nabla^\mathfrak{h}\bm{\varphi}
	\cdot
	(\bm{m} \times \nabla^\mathfrak{h} \bm{m})\:
	{\rm d}x{\rm d}t.
\end{aligned}
\end{equation}

 Taking $- \beta \times \eqref{weakI.2}$ and then adding $\alpha \times \eqref{weakI.1}$, we obtain that
\begin{align*}
&\alpha\int_0^T \int_{\Omega}  D^{\mathfrak{h}}_t\bm{m}\cdot\bm{\varphi}\:{\rm d } x {\rm d }t
-\beta\int_0^T \int_{\Omega} \left( \bm{m}\times D^{\mathfrak{h}}_t\bm{m}\right)\cdot\bm{\varphi}\:{\rm d } x {\rm d }t\\
=&(\alpha^2+\beta^2)\int_0^T \int_{\Omega}(\bm{m}\times \nabla^\mathfrak{h} \bm{m})\cdot\nabla^\mathfrak{h} \bm{\varphi}\: {\rm d}x{\rm d}t
		-(\alpha^2+\beta^2) \int_0^T \int_{\Omega} (\bm{m}\times (\bm{\pi}(\bm{m})+ \bm{f}))\cdot  \bm{\varphi} \:{\rm d}x{\rm d}t.
\end{align*}

On the other hand, since $\bm{m}^\ep\to \bm{m}$ strongly in $C^0([0,T]; L^2(\Om))$, it follows that
$$\bm{m}(\cdot, t) \to \bm{m}_0,\quad \text{in}\,\, L^2(\Om),\,\ \text{as}\,\, t\to 0.$$ 
Furthermore, by arguments similar to those used in Proposition~\ref{lemma3.5}, we also obtain $\nabla^\mathfrak{h} \bm{m} \cdot \bm{n}=0$
in the sense of traces.

Therefore,  the proof is completed.
\end{proof}

\medskip
In the end of this section, we establish an energy inequality for the weak solution given in Theorem \ref{theroem1}. 
\begin{proposition}
 Suppose that $ \bm{m}$ is a solution obtained in Theorem \ref{theroem1}. Then there exists a constant $ \delta>0$ such that, for every $T>0$, the following inequality holds:
 \begin{equation}\label{energy-ineq}
\begin{aligned}
   &\quad   \mathcal{E}^{\mathfrak{h}}[\bm{m}(T)]
   +\int_0^T\int_{\Omega}\bm{m}\cdot \partial_t \bm{f}\:{\rm d}x
  {\rm d}t
  + 
   \frac{\beta -\gamma \delta}{(\alpha^2+ \beta^2)(1+\gamma \delta)} \int_0^T\int_{\Omega}\left|\partial_t\bm{m}\right|^2\:{\rm d}x{\rm d}t
\\
&\leq 
\mathcal{E}^{\mathfrak{h}}[\bm{m}_0]
+
\left(\gamma^2 + \frac{\gamma}{\delta}\right)(1+\beta -\gamma \delta)
 \int_0^T\int_{\Omega}|\bm{v} \cdot \nabla^{\mathfrak{h}} \bm{m}  +\bm{v} \times  \bm{m} |^2\:{\rm d}x{\rm d}t,
\end{aligned}
\end{equation}
where $ \mathcal{E}^{\mathfrak{h}}[\bm{m}(t)]$ is defined in \eqref{helical_total_energy}.
\end{proposition}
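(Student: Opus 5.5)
We prove the energy inequality at the level of the approximate solutions $\bm{m}^\varepsilon$ of \eqref{I-approxi_2}, which are strong solutions with $|\bm{m}^\varepsilon|\le 1$ and $\Delta^{\mathfrak{h}}\bm{m}^\varepsilon\in L^2$. We then pass to the limit $\varepsilon\to 0$ by lower semicontinuity. Throughout we write $\bm{w}^\varepsilon:=\bm{v}\cdot\nabla^{\mathfrak{h}}\bm{m}^\varepsilon+\bm{v}\times\bm{m}^\varepsilon$.

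\textbf{Step 1: the test function.} We take $-\bm{H}(\bm{m}^\varepsilon)=-(\Delta^{\mathfrak{h}}\bm{m}^\varepsilon+\bm{\pi}(\bm{m}^\varepsilon)+\bm{f})$ as test function in \eqref{I-approxi_2}. For the left-hand side we use the integration by parts formula of Proposition \ref{lemma2.1} together with the boundary condition $\nabla^{\mathfrak{h}}\bm{m}^\varepsilon\cdot\bm{n}=0$. We also use the self-adjointness of $\bm{\pi}$ and $\frac{d}{dt}\int\bm{m}^\varepsilon\cdot\bm{f}=\int\partial_t\bm{m}^\varepsilon\cdot\bm{f}+\int\bm{m}^\varepsilon\cdot\partial_t\bm{f}$, which is valid since $\bm{f}$ is locally Lipschitz. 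This gives
\[
-\int_\Omega\partial_t\bm{m}^\varepsilon\cdot\bm{H}(\bm{m}^\varepsilon)\,{\rm d}x=\frac{{\rm d}}{{\rm d}t}\mathcal{E}^{\mathfrak{h}}[\bm{m}^\varepsilon]+\int_\Omega\bm{m}^\varepsilon\cdot\partial_t\bm{f}\,{\rm d}x .
\]
On the right-hand side the $\alpha$-term vanishes, because $(\bm{m}^\varepsilon\times\bm{H})\cdot\bm{H}=0$. The $\beta$-term contributes $-\beta\|\bm{m}^\varepsilon\times\bm{H}\|_{L^2}^2$. The viscous term gives $\varepsilon\int\Delta^{\mathfrak{h}}\bm{m}^\varepsilon\cdot\bm{H}$. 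This equals $\varepsilon\|\Delta^{\mathfrak{h}}\bm{m}^\varepsilon\|^2$ plus lower-order pieces bounded by $\varepsilon\|\Delta^{\mathfrak{h}}\bm{m}^\varepsilon\|_{L^2}\|\bm{\pi}(\bm{m}^\varepsilon)+\bm{f}\|_{L^2}$, and by \eqref{I_H^1} these pieces are $O(\sqrt{\varepsilon})$ after time integration. The transport term is $\gamma\int(\bm{m}^\varepsilon\times\bm{w}^\varepsilon)\cdot(\bm{m}^\varepsilon\times\bm{H})$.

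\textbf{Step 2: replacing $\bm{m}^\varepsilon\times\bm{H}$ by $\partial_t\bm{m}^\varepsilon$.} The dissipation must be expressed through $|\partial_t\bm{m}^\varepsilon|^2$. From the equation, $\partial_t\bm{m}^\varepsilon+\gamma P\bm{w}^\varepsilon-\varepsilon\Delta^{\mathfrak{h}}\bm{m}^\varepsilon=-\alpha\bm{m}^\varepsilon\times\bm{H}+\beta P\bm{H}$, where $P=-\bm{m}^\varepsilon\times(\bm{m}^\varepsilon\times\cdot)$. Taking norms, and using that $\bm{m}^\varepsilon\times\bm{H}$ and $\bm{m}^\varepsilon\times(\bm{m}^\varepsilon\times\bm{H})$ are orthogonal with essentially equal lengths (exactly so when $|\bm{m}^\varepsilon|=1$), we get
\[
|\partial_t\bm{m}^\varepsilon+\gamma P\bm{w}^\varepsilon|^2\le(\alpha^2+\beta^2)|\bm{m}^\varepsilon\times\bm{H}|^2+\text{(viscous error)}.
\]
We then apply Young's inequality with parameter $\delta$:
\[
|\partial_t\bm{m}^\varepsilon|^2\le(1+\gamma\delta)|\partial_t\bm{m}^\varepsilon+\gamma P\bm{w}^\varepsilon|^2+\Bigl(\gamma^2+\frac{\gamma}{\delta}\Bigr)|\bm{w}^\varepsilon|^2 .
\]
We also bound the transport term by $\gamma\delta\|\bm{m}^\varepsilon\times\bm{H}\|^2+\frac{\gamma}{4\delta}\|\bm{w}^\varepsilon\|^2$. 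Combining these yields the coefficient $\frac{\beta-\gamma\delta}{(\alpha^2+\beta^2)(1+\gamma\delta)}$ in front of $\|\partial_t\bm{m}^\varepsilon\|^2$, and a $\bm{v}$-term with constant of the form $(\gamma^2+\gamma/\delta)(1+\beta-\gamma\delta)$. Matching exactly the constants in \eqref{energy-ineq} is bookkeeping, and we may choose $\delta$ small so that $\beta-\gamma\delta>0$. We then integrate over $[0,T]$, using $\mathcal{E}^{\mathfrak{h}}[\bm{m}^\varepsilon(0)]=\mathcal{E}^{\mathfrak{h}}[\bm{m}_0]$.

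\textbf{Step 3: the limit $\varepsilon\to0$.} This is the main obstacle: the viscous cross terms must vanish, and the nonlinear terms must converge. From \eqref{I_H^1}, $\varepsilon\int\!\!\int|\Delta^{\mathfrak{h}}\bm{m}^\varepsilon|^2\le\mathfrak{C}_1$. Hence $\varepsilon\Delta^{\mathfrak{h}}\bm{m}^\varepsilon\to0$ in $L^2$ with rate $\sqrt{\varepsilon}$, so every viscous error is $O(\sqrt{\varepsilon})$; the favourable term $\varepsilon\|\Delta^{\mathfrak{h}}\bm{m}^\varepsilon\|^2$ is simply dropped. Weak lower semicontinuity handles $\|\nabla^{\mathfrak{h}}\bm{m}(T)\|^2$ and $\|\partial_t\bm{m}\|^2_{L^2L^2}$. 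The terms $\int\bm{m}\cdot\bm{\pi}(\bm{m})$, $\int\bm{m}\cdot\bm{f}$ and $\int\!\!\int\bm{m}\cdot\partial_t\bm{f}$ pass to the limit by strong $C^0L^2$ convergence.

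The delicate term is $\int\!\!\int|\bm{w}^\varepsilon|^2$, which sits on the good side of the inequality. It needs strong convergence of $\nabla^{\mathfrak{h}}\bm{m}^\varepsilon$ in $L^2$, which is not available. If this fails, we would first try to use the resulting limsup bound directly. Since $\bm{v}\in L^2L^\infty$, $\limsup\int\!\!\int|\bm{w}^\varepsilon|^2$ is controlled by weak-limit quantities only up to defect. So we would instead derive the inequality with $\bm{w}$ evaluated against a quantity that converges, or else accept the inequality with the weak-limit term justified via the bound $\|\bm{w}^\varepsilon\|\le\|\bm{v}\|_{L^\infty}\|\bm{m}^\varepsilon\|_{H^1_{\mathfrak{h}}}$. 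The fallback is to apply the same energy identity to the limit $\bm{m}$ directly when $\partial_t\bm{m}\in L^2$ permits it.
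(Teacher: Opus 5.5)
Your Steps 1--2 follow the paper's argument. The paper also tests \eqref{I-approxi_2} with $-\bm H(\bm m^\varepsilon)$ and bounds the transport term by $\gamma\delta\|\bm m^\varepsilon\times\bm H\|^2_{L^2}+\frac{\gamma}{4\delta}\|\bm m^\varepsilon\times\bm w^\varepsilon\|^2_{L^2}$. It then trades $\|\bm m^\varepsilon\times\bm H\|^2$ for $\|\partial_t\bm m^\varepsilon\|^2$, doing so by testing with $\partial_t\bm m^\varepsilon$ and estimating three terms separately. Your pointwise identity $|{-\alpha}\bm m\times\bm H+\beta P\bm H|^2=(\alpha^2+\beta^2|\bm m|^2)|\bm m\times\bm H|^2$, followed by a single Young inequality, is cleaner and gives the factor $(\alpha^2+\beta^2)(1+\gamma\delta)$ exactly.

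One bookkeeping correction. Your constant in front of $\|\bm w^\varepsilon\|^2$ is $\frac{\gamma}{4\delta}+\frac{(\beta-\gamma\delta)(\gamma^2+\gamma/\delta)}{(\alpha^2+\beta^2)(1+\gamma\delta)}$. For small $\delta$ this can exceed the stated $(\gamma^2+\gamma/\delta)(1+\beta-\gamma\delta)$, for instance when $\alpha=0$ and $\beta$ is small. Taking $\delta$ slightly below $\beta/\gamma$ works, and the existential quantifier on $\delta$ allows this choice.

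The real gap is Step 3, which you leave open. The term $\int_0^T\!\int_\Omega|\bm w^\varepsilon|^2$ sits on the upper-bound side, so lower semicontinuity goes the wrong way. You need $\limsup_{\varepsilon\to0}\int_0^T\!\int_\Omega|\bm w^\varepsilon|^2\le\int_0^T\!\int_\Omega|\bm w|^2$, i.e.\ no concentration of $|\bm v|^2|\nabla^{\mathfrak h}\bm m^\varepsilon|^2$. However, \eqref{I_H^1} and \eqref{I-time_derivative} give only weak convergence of $\nabla^{\mathfrak h}\bm m^\varepsilon$ in $L^2$; the only $\varepsilon$-uniform second-order control is on $\bm m^\varepsilon\times\Delta^{\mathfrak h}\bm m^\varepsilon$.

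Your fallbacks do not supply this.
\begin{enumerate}
\item The bound $\|\bm w^\varepsilon\|_{L^2}\le C\|\bm v\|_{L^\infty}\|\bm m^\varepsilon\|_{H^1_{\mathfrak h}}$ only lets you keep $\liminf_{\varepsilon\to0}\int_0^T\!\int_\Omega|\bm w^\varepsilon|^2$, or a cruder constant, on the right. It does not give the quantity built from the limit $\bm m$.
\item Redoing the computation on $\bm m$ itself is not possible. $\bm H(\bm m)$ is not an admissible test function. The chain rule you would need, $\frac{\rm d}{{\rm d}t}\frac12\|\nabla^{\mathfrak h}\bm m\|^2_{L^2}=-\int_\Omega\partial_t\bm m\cdot\Delta^{\mathfrak h}\bm m\,{\rm d}x$, is itself an energy identity for a weak solution. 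It is not available from $\partial_t\bm m\in L^2$ and $\bm m\in L^\infty H^1$.
\end{enumerate}

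The paper's proof does not resolve this point either. It ends with ``integrating over $[0,T]$ and letting $\varepsilon\to0$'' and passes to the limit in $\int_0^T\|\mathfrak T_{\bm v}\|^2_{L^2}\,{\rm d}t$ without justification. So the obstacle you identified is genuine. What both arguments actually establish is the inequality with $\liminf_{\varepsilon\to0}\int_0^T\!\int_\Omega|\bm w^\varepsilon|^2$ on the right-hand side. This coincides with the stated inequality in two cases: when $\gamma=0$ (the case recorded in the remark after Theorem~\ref{theroem1}), or when $\bm v\cdot\nabla^{\mathfrak h}\bm m^\varepsilon\to\bm v\cdot\nabla^{\mathfrak h}\bm m$ strongly in $L^2(\Omega\times[0,T])$.
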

\begin{proof}
For convenience, we denote 
$
\mathfrak{T}_{\bm{v}}:=  \bm{m}^\varepsilon \times (\bm{v} \cdot \nabla^{\mathfrak{h}} \bm{m}^\varepsilon + \bm{v} \times  \bm{m}^\varepsilon)
$ 
and
$ 
\mathfrak{T}_{\bm{H}}:= \bm{m}^\varepsilon \times \bm{H}(\bm{m}^\varepsilon)
$. 
We then multiply the equation in \eqref{I-approxi_2} by $ -\bm{H}(\bm{m}^\varepsilon) $ to obtain
\begin{align*}
\frac{{\rm d}}{{\rm d}t}\mathcal{E}^{\mathfrak{h}}[\bm{m}^\varepsilon(t)]+ 
\int_{\Omega}\bm{m}^\varepsilon\cdot \partial_t \bm{f}\:{\rm d}x=&-\int_{\Omega} \partial_t \bm{m}^\varepsilon  \cdot \bm{H}(\bm{m}^\varepsilon) \: {\rm d}x=B_1+B_2, 
\end{align*}
 where
  \begin{align*}
  B_1=&-\ep\int_{\Omega} 
  \De \bm{m}^\varepsilon 
  \cdot 
  \bm{H}(\bm{m}^\varepsilon)
  \:{\rm d}x
  =-\varepsilon\int_{\Omega} |\Delta^{\mathfrak{h}}\bm{m}^\varepsilon|^2+  \Delta^{\mathfrak{h}} \bm{m}^\varepsilon  \cdot ( \bm{\pi}(\bm{m^\varepsilon}) + \bm{f})   \:{\rm d}x\\
  \leq& \frac{\varepsilon }{2}\int_{\Omega} 
       |\bm{\pi}(\bm{m}^\varepsilon)|^2 + | \bm{f}|^2
      \:  {\rm d}x
 = \mathcal{O}(\varepsilon)
 \end{align*}
and 
\begin{align*}
 B_2=&-\gamma  \int_{\Omega} 
(\bm{m}^\varepsilon \times \mathfrak{T}_{\bm{v}})
 \cdot 
 \bm{H}(\bm{m}^\varepsilon)
 \:{\rm d}x 
 +
  \beta \int_{\Omega}  ( \bm{m}^\varepsilon \times \mathfrak{T}_{H} )  \cdot \bm{H}(\bm{m}^\varepsilon )\:{\rm d}x\\
 \leq&
  \frac{\gamma}{4\delta}  \int_{\Omega} 
  |\mathfrak{T}_{\bm{v}}|^2 {\rm d}x 
  +
  (\gamma\delta-\beta)\int_{\Omega}  |\mathfrak{T}_{\bm{H}}|^2 \: {\rm d}x.
  \end{align*}
This implies that
  \begin{align}\label{II-estimate_4}
       \frac{{\rm d}}{{\rm d}t} \mathcal{E}^{\mathfrak{h}}[\bm{m}^\varepsilon(t)]
       +\int_{\Omega}\bm{m}^\varepsilon\cdot \partial_t \bm{f}{\rm d}x
       +
        \mathcal{O}(\varepsilon) 
        +(\beta -\gamma \delta ) \|\mathfrak{T}_{\bm{H}}\|_{L^2(\Omega)}^2 
        \leq  
       \frac{\gamma}{4\delta}  \|\mathfrak{T}_{\bm{v}}\|_{L^2(\Omega)}^2,
  \end{align}
where $\int_{0}^{T}\mathcal{O}(\varepsilon)dt\to 0 $ as $\varepsilon\to0$. 

On the other hand, choosing $\partial_t \bm{m}^\varepsilon$ as a test function to \eqref{I-approxi_2}, we get 
\begin{equation}
      \begin{aligned}\label{II_estimate5}
      \| \partial_t \bm{m}^\varepsilon \|_{L^2(\Omega)}^2 
      -\frac{\varepsilon}{2} \frac{{\rm d}}{{\rm d}t}
       \|\nabla^\mathfrak{h} \bm{m}^\varepsilon \|_{L^2(\Omega)}^2 
 =&
       \int_{\Omega} 
 \left(
        \gamma\bm{m}^\varepsilon \times \mathfrak{T}_{\bm{v}}
         - \alpha \mathfrak{T}_{\bm{H}}
        -\beta  \bm{m}^\varepsilon \times  \mathfrak{T}_{\bm{H}}
 \right)
 \cdot 
 \partial_t \bm{m}^\varepsilon{\rm d}x\\
 =&C_1+C_2+C_2.
  \end{aligned}
\end{equation}
Next, we now turn  to  the terms $C_1$ to $C_3$ as follows. Noting that $|\bm{m}^\varepsilon|\leq 1$ and 
\begin{align*}
\partial_t \bm{m}^\varepsilon -\varepsilon \Delta^{\mathfrak{h}} \bm{m}^\varepsilon = 
\gamma\bm{m}^\varepsilon \times  \mathfrak{T}_{\bm{v}}-\alpha \mathfrak{T}_{\bm{H}}
- \beta  \bm{m}^\varepsilon\times
\mathfrak{T}_{\bm{H}},
\end{align*}
we obtain the following estimates
\begin{align*}
   C_1=&\gamma  \int_{\Omega} 
 \left(\bm{m}^\varepsilon \times \mathfrak{T}_{\bm{v}}\right)
 \cdot 
 \partial_t \bm{m}^\varepsilon{\rm d}x\\
\leq &
\varepsilon\gamma \int_{\Omega}  |\mathfrak{T}_{\bm{v}}||\mathfrak{T}_{\bm{H}}|{\rm d}x 
+ 
\gamma^2 \int_{\Omega} |\mathfrak{T}_{\bm{v}}|^2 {\rm d}x 
+ 
 (\alpha + \beta)\gamma  \int_{\Omega} |\mathfrak{T}_{\bm{v}}||\mathfrak{T}_{\bm{H}}|
 {\rm d}x 
 \\
 \leq& \mathcal{O}(\varepsilon)
 + \left(\gamma^2 + \frac{\gamma}{4\delta}\right)\|\mathfrak{T}_{\bm{v}}\|_{L^2(\Omega)}^2
 + \delta \gamma (\alpha^2+ \beta^2)\|\mathfrak{T}_{\bm{H}}\|_{L^2(\Omega)}^2,\\
C_2=&- \alpha\int_{\Omega}
    \mathfrak{T}_{\bm{H}}\cdot \partial_t \bm{m}^\varepsilon{\rm d}x\\
=& -\alpha\int_{\Omega}
     \varepsilon\mathfrak{T}_{\bm{H}} \cdot ( \mathfrak{T}_{\bm{H}}-\bm{\pi}(\bm{m^\varepsilon}) - \bm{f})
     -
     \alpha  |\mathfrak{T}_{\bm{H}}|^2
     +
     \gamma  \mathfrak{T}_{\bm{H}} \cdot (\bm{m}^\varepsilon \times\mathfrak{T}_{\bm{v}})
{\rm d}x
\\
\leq& \mathcal{O}(\varepsilon)
+  \alpha^2  (1+ \gamma \delta)\|\mathfrak{T}_{\bm{H}}\|_{L^2(\Omega)}^2
+\frac{\gamma}{4\delta}\|\mathfrak{T}_{\bm{v}}\|_{L^2(\Omega)}^2,
\end{align*}
and 
\begin{align*}
C_3=&    - \beta \int_{\Omega}  ( \bm{m}^\varepsilon \times  \mathfrak{T}_{\bm{H}})\cdot \partial_t \bm{m}^\varepsilon{\rm d}x
     \\
     =&
     -\beta \int_{\Omega} \varepsilon( \bm{m}^\varepsilon \times  \mathfrak{T}_{\bm{H}})\cdot (-\bm{\pi}(\bm{m^\varepsilon}) - \bm{f}))
     -
     \beta|\bm{m}^\varepsilon \times  \mathfrak{T}_{\bm{H}}|^2
     +
     \gamma (\bm{m}^\varepsilon \times  \mathfrak{T}_{\bm{H}})\cdot 
     (\bm{m}^\varepsilon \times  \mathfrak{T}_{\bm{v}}){\rm d}x
     \\
     \leq& \mathcal{O}(\varepsilon)
+  \beta^2 (1+\gamma \delta)\|\mathfrak{T}_{\bm{H}}\|_{L^2(\Omega)}^2
+ \frac{\gamma}{4\delta}\|\mathfrak{T}_{\bm{v}}\|_{L^2(\Omega)}^2.
\end{align*}

Substituting the above estimates of $C_1$-$C_3$ into \eqref{II_estimate5}, we get
\begin{align*}
 \|\mathfrak{T}_{\bm{H}}\|_{L^2(\Omega)}^2+\left(\gamma^2 + \frac{\gamma}{\delta}\right)\|\mathfrak{T}_{\bm{v}}\|_{L^2(\Omega)}^2 
    \geq& 
     \mathcal{O}(\varepsilon)  
     + \frac{1}{(\alpha^2+ \beta^2)(1+\gamma \delta)}\left(\| \partial_t \bm{m}^\varepsilon \|_{L^2(\Omega)}^2 
      -\frac{\varepsilon}{2} \frac{{\rm d}}{{\rm d}t}
       \|\nabla^\mathfrak{h} \bm{m}^\varepsilon \|_{L^2(\Omega)}^2 
       \right)
\end{align*}
which, together with  \eqref{II-estimate_4},  implies
\begin{align*}
     &\frac{{\rm d}}{{\rm d}t} \mathcal{E}^{\mathfrak{h}}[\bm{m}^\varepsilon(t)]
       +
       \int_{\Omega}\bm{m}^\varepsilon\cdot \partial_t \bm{f}{\rm d}x
       +
        \frac{\beta -\gamma \delta}{(\alpha^2+ \beta^2)(1+\gamma \delta)}\left(\| \partial_t \bm{m}^\varepsilon \|_{L^2(\Omega)}^2 
      -\frac{\varepsilon}{2} \frac{{\rm d}}{{\rm d}t}
       \|\nabla^\mathfrak{h} \bm{m}^\varepsilon \|_{L^2(\Omega)}^2 
       \right)
\\
\leq& 
     \mathcal{O}(\varepsilon)  
    +
    \left(\gamma^2 + \frac{\gamma}{\delta}\right)(1+\beta -\gamma \delta)\|\mathfrak{T}_{\bm{v}}\|_{L^2(\Omega)}^2.
\end{align*}
Integrating the above inequality over $[0,T]$ and letting $\varepsilon\to 0$, we obtain the desired energy inequality. 

The proof is completed.
\end{proof}

\section{Type-II weak solutions to LLG equation}\label{s: Type-II weak solution}
In this section, we prove Theorem \ref{theroem1'} and Theorem \ref{theorem2}. We assume that the vector $\bm{v}$ satisfies one of the following two regularity conditions:
\begin{itemize}
	\item[\eqref{(a)}:]  $ \bm{v} \in  L^2_{loc} ([0,\infty); L^3(\Omega)\cap W^{1,1}(\Omega))$, $\bm{v}\cdot \bm{n}|_{\partial \Omega \times  [0, \infty)}=0$ and $  \nabla \bm{v } =-( \nabla \bm{v })^T\,\,\text{in} \,\, \Omega\times [0, \infty)$;
	\item[\eqref{(c)}:] $\bm{v} \in  L^2_{loc}([0,\infty); L^3(\Omega)), \:  \nabla \bm{v} \in  L^1_{loc} ([0,\infty); L^\infty(\Omega)), \: \bm{v}\cdot \bm{n}|_{\partial \Omega \times  [0, \infty)}=0 \:\:{\rm and } \:\: {\rm div} \bm{v}=0 \,\,\text{in} \,\, \Omega\times [0, \infty)$.
\end{itemize} 
In this situation, we consider another approximate system of \eqref{system1'}:
\begin{equation}\label{II-galerkin1}
    \begin{cases}
       \begin{aligned}
            \partial_{t} \bm{m} - \varepsilon\Delta^{\mathfrak{h}} \bm{m}
      &=-\gamma\left(\bm{v} \cdot \nabla^{\mathfrak{h}} \bm{m} + \bm{v} \times \bm{m}\right)
        -\alpha \mathcal{J}(\bm{m}) \times  
           \bm{H}(\bm{m})
   \\
        &\quad 
           - \beta  \mathcal{J}(\bm{m}) \times
      \left( 
           \mathcal{J}(\bm{m}) \times    \bm{H} (\bm{m})
      \right), 
       \end{aligned}
       &\text{in } \Omega \times [0,\infty),
      \\
      \nabla^{\mathfrak{h}}  \bm{m} \cdot\bm{n} = 0,  & \text{on } \partial \Omega \times [0,\infty),  
      \\
    \bm{m}(\cdot,0) = \bm{m}_{0},   &\text{in } \Omega,
    \end{cases}
\end{equation}
where the complex structure approximation $\mathcal{J}(\bm{m})$ is given by $\mathcal{J}(\bm{m}) =\dfrac{\bm{m}}{\max\{|\bm{m}|,1\}}$.

\subsection{Global strong solutions to the approximation system \eqref{II-galerkin1}}\label{ss: Ga-es} We adopt the corresponding Galerkin approximation of the system \eqref{II-galerkin1}:
\begin{equation}\label{II_projection}
    \begin{cases}
       \begin{aligned}
            \partial_{t} \bm{m}_n^\varepsilon - \varepsilon \Delta^{\mathfrak{h}} \bm{m}_n^\varepsilon =&
        -\gamma  \mathcal{P}_n\left(\bm{v} \cdot \nabla^{\mathfrak{h}} \bm{m}_n^\varepsilon +\bm{v} \times \mathcal{J}(\bm{m}_n^\varepsilon)
          \right)
   \\
   & - \alpha \mathcal{P}_n \left(
                   \mathcal{J}(\bm{m}_n^\varepsilon) \times  
         \bm{H}(\bm{m}_n^\varepsilon) 
 \right)
 \\
     &- \beta  \mathcal{P}_n 
     \left( 
     \mathcal{J}(\bm{m}_n^\varepsilon)\times
      \left( 
          \mathcal{J}(\bm{m}_n^\varepsilon)\times  \bm{H}(\bm{m}_n^\varepsilon) 
      \right) \right),
       \end{aligned}
       &\text{in } \Omega \times [0,\infty),
      \\
      \nabla^{\mathfrak{h}}  \bm{m}_n^\varepsilon \cdot\bm{n} = 0,   &\text{on } \partial \Omega \times [0,\infty),  
      \\
      \bm{m}_n^\varepsilon (x,0)= \mathcal{P}_n(\bm{m}_0)(x), &\text{in } \Omega.
    \end{cases}
\end{equation}
By applying similar argument as in Section \ref{ss: Ga-Type-I}, we derive that there exists a regular solution $\bm{m}^\ep_n\in  \mathcal{H}_n$ on a maximal existence time interval $[0, T_n)$. Next, we establish  a priori estimates for this approximate solution $\bm{m}^\ep_n$.

\begin{lemma}
    Suppose $\bm{m}_0 \in L^2(\Omega;\mathbb{S}^2)$, then we have the uniform estimate 
    \begin{align}\label{II-energy}
        \sup\limits_{0\leq t\leq T} \int_{\Omega}
        |\bm{m}_n^\varepsilon|^2 {\rm d}x 
        + 2\varepsilon \int_0^T \int_{\Omega}
        |\nabla^{\mathfrak{h}}\bm{m}_n^\varepsilon|^2 
        {\rm d}x  {\rm d}t
        \leq 
         \int_{\Omega}
        |\bm{m}_0|^2 {\rm d}x,
        \quad \forall T \in (0,T_n).
    \end{align}
This estimate implies that $T_n =+\infty.$
\end{lemma}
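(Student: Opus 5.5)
The plan is to test the projected equation \eqref{II_projection} against $\bm{m}_n^\varepsilon$ itself, exactly as in the proof of \eqref{I-energy}. The new feature compared with Section \ref{ss: Ga-Type-I} is that the transport term is no longer wrapped in $\mathcal{J}(\bm{m}_n^\varepsilon)\times(\mathcal{J}(\bm{m}_n^\varepsilon)\times\cdot)$, so it will not vanish pointwise. Instead it must be killed by an integration by parts using the structural assumptions on $\bm{v}$.

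\textbf{Step 1: removing the projection.} Since $\bm{m}_n^\varepsilon(t)\in\mathcal{H}_n$ and $\mathcal{P}_n$ is self-adjoint with $\mathcal{P}_n\bm{m}_n^\varepsilon=\bm{m}_n^\varepsilon$, every term $\int_\Omega \mathcal{P}_n(\cdot)\cdot\bm{m}_n^\varepsilon\,{\rm d}x$ equals $\int_\Omega(\cdot)\cdot\bm{m}_n^\varepsilon\,{\rm d}x$.

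\textbf{Step 2: the dissipative and gyroscopic terms.} The $\alpha$- and $\beta$-terms vanish because $(\mathcal{J}(\bm{m})\times\bm{b})\cdot\bm{m}=0$ for every vector $\bm{b}$, as $\mathcal{J}(\bm{m})$ is parallel to $\bm{m}$. The $\varepsilon$-term is handled by the integration by parts formula of Proposition \ref{lemma2.1}. Because the eigenfunctions $\bm{f}_i$ satisfy $\nabla^{\mathfrak{h}}\bm{f}_i\cdot\bm{n}=0$, the boundary term drops and we get $-\varepsilon\|\nabla^{\mathfrak{h}}\bm{m}_n^\varepsilon\|_{L^2}^2$.

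\textbf{Step 3: the transport term.} We have $(\bm{v}\times\mathcal{J}(\bm{m}))\cdot\bm{m}=0$, again by parallelism. The remaining piece is
\[
(\bm{v}\cdot\nabla^{\mathfrak{h}}\bm{m})\cdot\bm{m}=\sum_i v_i\big(\partial_i\bm{m}-\bm{e}_i\times\bm{m}\big)\cdot\bm{m}=\tfrac12\,\bm{v}\cdot\nabla|\bm{m}|^2 .
\]
Integrating by parts gives
\[
\tfrac12\int_{\partial\Omega}(\bm{v}\cdot\bm{n})\,|\bm{m}|^2\,{\rm d}S-\tfrac12\int_\Omega {\rm div}\,\bm{v}\,|\bm{m}|^2\,{\rm d}x .
\]
The boundary integral vanishes since $\bm{v}\cdot\bm{n}=0$. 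The interior integral vanishes since ${\rm div}\,\bm{v}=0$: under \eqref{(a)} this holds because ${\rm div}\,\bm{v}={\rm tr}\,\nabla\bm{v}$ and $\nabla\bm{v}=-(\nabla\bm{v})^T$ has zero trace, while under \eqref{(c)} it is assumed directly.

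\textbf{Step 4: conclusion.} Steps 1--3 give
\[
\tfrac12\tfrac{{\rm d}}{{\rm d}t}\|\bm{m}_n^\varepsilon\|_{L^2}^2+\varepsilon\|\nabla^{\mathfrak{h}}\bm{m}_n^\varepsilon\|_{L^2}^2=0 .
\]
We integrate over $[0,T]$ and use $\|\mathcal{P}_n\bm{m}_0\|_{L^2}\le\|\bm{m}_0\|_{L^2}$ to obtain \eqref{II-energy}. Since $|\bm{g}^n(t)|=\|\bm{m}_n^\varepsilon(t)\|_{L^2}$ stays bounded, the ODE solution cannot blow up, so $T_n=+\infty$.

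The main point to check is the justification of the integration by parts in Step 3 under the weak integrability of $\bm{v}$. We have only $\bm{v}\in L^3\cap W^{1,1}$ in space under \eqref{(a)}, and only $\bm{v}\in L^3$ with $\nabla\bm{v}\in L^\infty$ in space under \eqref{(c)}. This is harmless because $\bm{m}_n^\varepsilon$ is a finite combination of eigenfunctions, which are smooth up to the boundary by elliptic regularity. Hence $|\bm{m}_n^\varepsilon|^2\in C^1(\bar\Omega)$, and the boundary trace of $\bm{v}\cdot\bm{n}$ makes sense. If needed, one can first approximate $\bm{v}$ by smooth tangential fields and then pass to the limit.
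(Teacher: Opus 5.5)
Your proposal is correct and follows the paper's proof step for step. Both test \eqref{II_projection} with $\bm{m}_n^\varepsilon$, let all $\mathcal{J}$-terms (including $\bm{v}\times\mathcal{J}(\bm{m}_n^\varepsilon)$) drop pointwise, and kill $\frac{\gamma}{2}\int_\Omega \bm{v}\cdot\nabla|\bm{m}_n^\varepsilon|^2\,{\rm d}x$ using ${\rm div}\,\bm{v}=0$ and $\bm{v}\cdot\bm{n}=0$; your remark that ${\rm div}\,\bm{v}={\rm tr}\,\nabla\bm{v}=0$ under \eqref{(a)}, and your justification of the integration by parts, fill in details the paper leaves implicit.

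One small caveat, which the paper shares: the identity $\|\bm{m}_n^\varepsilon(t)\|_{L^2}^2+2\varepsilon\int_0^t\|\nabla^{\mathfrak{h}}\bm{m}_n^\varepsilon\|_{L^2}^2\,{\rm d}s=\|\mathcal{P}_n\bm{m}_0\|_{L^2}^2$ bounds the supremum of this sum by $\|\bm{m}_0\|_{L^2}^2$. Bounding the supremum and the full time integral separately, as \eqref{II-energy} is literally written, only gives $2\|\bm{m}_0\|_{L^2}^2$ on the right, which is harmless for everything that follows.
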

\begin{proof}
    
Multiplying the equation in \eqref{II_projection} by $\bm{m}_n^\varepsilon$ and then integrating over $\Omega$ by parts, it yields 
    \begin{align*} 
    \int_{\Omega}
        \partial_t \bm{m}_n^\varepsilon\cdot \bm{m}_n^\varepsilon  {\rm d}x
        - \int_{\Omega}\varepsilon \Delta^{\mathfrak{h}} \bm{m}_n^\varepsilon \cdot \bm{m}_n^\varepsilon  {\rm d}x
&= \gamma\int_{\Omega} 
          \left(\bm{v} \cdot \nabla^{\mathfrak{h}} \bm{m}_n^\varepsilon + \bm{v} \times \mathcal{J}(\bm{m}_n^\varepsilon)
            \right)
      \cdot \bm{m}_n^\varepsilon {\rm d}x
\\
&=\frac{\gamma}{2}\int_{\Omega}  
 \left(\bm{v} \cdot \nabla|\bm{m}_n^\varepsilon|^2\right){\rm d}x
 =0,
\end{align*}
where we have used the facts: $\nabla \cdot \bm{v} =0$ inside $\Om$ and 
$ 
\bm{v}\cdot \bm{n}|_{\partial \Omega \times [0,\infty)}=0.
$
This formula implies the inequality \eqref{II-energy}. 

Therefore, the proof is completed.
\end{proof}

\begin{lemma}\label{II-lemma3.8}
Suppose  $\bm{m}_0 \in H^1(\Omega;\mathbb{S}^2)$, $ \bm{v} $ satisfies the condition \eqref{(a)} or \eqref{(c)}, $\bm{f}\in L^2_{loc}([0,\infty);L^2(\Omega))$, and $\bm{\pi}$ satisfies \eqref{Cpi}. Then, for any $T<\infty$, the solution $\bm{m}_n^\varepsilon$ satisfies
    \begin{equation}\label{II_H^1}
    \begin{split}
&\quad \sup_{t\in[0,T]} \|\bm{m}_n^\varepsilon(t)\|_{H^1_{\mathfrak{h}}(\Omega)}^2
+ \varepsilon \int_0^T \|\Delta^{\mathfrak{h}} \bm{m}_n^\varepsilon\|_{L^2(\Omega)}^2 {\rm d}t 
+ 2\beta \int_0^T\int_{\Omega}  | \mathcal{J}(\bm{m}_n^\varepsilon)\times 
     \Delta^{\mathfrak{h}} \bm{m}_n^\varepsilon |^2{\rm d} x{\rm d}t
\leq \mathfrak{C}_2
\end{split}
\end{equation}
where the constant $\mathfrak{C}_2$ depends only on $\ep, \al,\beta, \gamma, T, \bm{v}, \norm{\bm{f}}_{L^2([0,T]\times \Om)}$ and $\norm{\bm{m}_0}_{H^1(\Om)}$. Moreover, $\mathfrak{C}_2$ is uniform with respect to $\beta$, i.e., $\sup_{0\leq \beta\leq 1}\mathfrak{C}_2<\infty$.
\end{lemma}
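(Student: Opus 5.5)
We test the projected equation \eqref{II_projection} with $-\Delta^{\mathfrak{h}}\bm{m}_n^\varepsilon\in\mathcal{H}_n$, exactly as in Lemma \ref{lemma3.2}. Since $\mathcal{P}_n$ is self-adjoint and $\Delta^{\mathfrak{h}}\bm{m}_n^\varepsilon\in\mathcal{H}_n$, the projections drop out. The boundary condition and Proposition \ref{lemma2.1} then give
\begin{align*}
\frac12\frac{{\rm d}}{{\rm d}t}\|\nabla^{\mathfrak{h}}\bm{m}_n^\varepsilon\|_{L^2}^2+\varepsilon\|\Delta^{\mathfrak{h}}\bm{m}_n^\varepsilon\|_{L^2}^2
=\gamma\int_\Omega(\bm{v}\cdot\nabla^{\mathfrak{h}}\bm{m}_n^\varepsilon+\bm{v}\times\mathcal{J}(\bm{m}_n^\varepsilon))\cdot\Delta^{\mathfrak{h}}\bm{m}_n^\varepsilon\,{\rm d}x+A_2+A_3,
\end{align*}
where $A_2,A_3$ are the $\alpha$- and $\beta$-terms of \eqref{es-H-1m_n}. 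We handle these exactly as there, with one change. In $A_2$ we do not use $\beta$ to absorb. Instead we bound $\alpha|\int(\mathcal{J}\times\Delta^{\mathfrak{h}}\bm{m})\cdot(\bm{\pi}(\bm{m})+\bm{f})|\le\frac{\varepsilon}{4}\|\Delta^{\mathfrak{h}}\bm{m}\|^2+\frac{C\alpha^2}{\varepsilon}(\|\bm{m}\|^2+\|\bm{f}\|^2)$, using $|\mathcal{J}|\le1$ and \eqref{Cpi}. Similarly, $A_3\le-\beta\|\mathcal{J}\times\Delta^{\mathfrak{h}}\bm{m}\|^2+\beta\|\mathcal{J}\times\Delta^{\mathfrak{h}}\bm{m}\|\,\|\bm{\pi}(\bm{m})+\bm{f}\|$, and we split it with Young's inequality. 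The resulting constants are bounded uniformly for $\beta\in[0,1]$, at the price of $\varepsilon$-dependence, which the statement allows. The lower-order term $\gamma\int(\bm{v}\times\mathcal{J})\cdot\Delta^{\mathfrak{h}}\bm{m}$ is at most $\frac{\varepsilon}{4}\|\Delta^{\mathfrak{h}}\bm{m}\|^2+\frac{C\gamma^2}{\varepsilon}\|\bm{v}\|_{L^2}^2$, since $|\mathcal{J}|\le1$ and $\bm{v}\in L^2_tL^3_x$.

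The main obstacle is the transport term $\gamma\int(\bm{v}\cdot\nabla^{\mathfrak{h}}\bm{m})\cdot\Delta^{\mathfrak{h}}\bm{m}$, because $\bm{v}$ is no longer $L^\infty$. I would treat it in two ways.

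\emph{Direct route.} By H\"older, Sobolev and Propositions \ref{proposition2.1}--\ref{proposition2},
\[
\|\bm{v}\|_{L^3}\|\nabla^{\mathfrak{h}}\bm{m}\|_{L^6}\|\Delta^{\mathfrak{h}}\bm{m}\|_{L^2}\le C\|\bm{v}\|_{L^3}\big(\|\Delta^{\mathfrak{h}}\bm{m}\|+\|\bm{m}\|\big)\|\Delta^{\mathfrak{h}}\bm{m}\|.
\]
This term is quadratic in $\Delta^{\mathfrak{h}}\bm{m}$ with a non-small coefficient, so it cannot be absorbed unless $\|\bm{v}\|_{L^3}$ is small. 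Hence I will instead integrate by parts, which is what the structural hypotheses on $\bm{v}$ are for. Proposition \ref{lemma2.1}(2) moves one helical derivative, and the boundary term vanishes because $\nabla^{\mathfrak{h}}\bm{m}\cdot\bm{n}=0$. This gives
\[
-\sum_{i,k}\int\partial_k^{\mathfrak{h}}(v_i\partial_i^{\mathfrak{h}}\bm{m})\cdot\partial_k^{\mathfrak{h}}\bm{m}=-\sum\int\partial_kv_i\,\partial_i^{\mathfrak{h}}\bm{m}\cdot\partial_k^{\mathfrak{h}}\bm{m}-\sum\int v_i\,\partial_k^{\mathfrak{h}}\partial_i^{\mathfrak{h}}\bm{m}\cdot\partial_k^{\mathfrak{h}}\bm{m}.
\]
In the first integral, $\partial_i^{\mathfrak{h}}\bm{m}\cdot\partial_k^{\mathfrak{h}}\bm{m}$ is symmetric in $(i,k)$. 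Under \eqref{(a)}, $\nabla\bm{v}$ is antisymmetric, so this integral vanishes identically. Under \eqref{(c)}, it is bounded by $\|\nabla\bm{v}\|_{L^\infty}\|\nabla^{\mathfrak{h}}\bm{m}\|^2$.

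For the second integral, I swap $\partial_k^{\mathfrak{h}}\partial_i^{\mathfrak{h}}=\partial_i^{\mathfrak{h}}\partial_k^{\mathfrak{h}}+$commutator using \eqref{eq:commutor_estimate}. The commutator is zeroth order in $\bm{m}$, so it contributes at most $\|\bm{v}\|_{L^3}\|\bm{m}\|_{L^6}\|\nabla^{\mathfrak{h}}\bm{m}\|\le C\|\bm{v}\|_{L^3}\|\bm{m}\|_{H^1}^2$. The main part is $\frac12\int\bm{v}\cdot\nabla|\nabla^{\mathfrak{h}}\bm{m}|^2$, up to a zeroth-order correction from $\partial_i^{\mathfrak{h}}=\partial_i-\bm{e}_i\times$; the cross product is antisymmetric, so this correction is of the same lower-order type. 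The main part vanishes after integrating by parts, since ${\rm div}\,\bm{v}=0$ and $\bm{v}\cdot\bm{n}=0$. Under \eqref{(a)}, the trace of an antisymmetric matrix is zero, so ${\rm div}\,\bm{v}=0$ holds there as well. In case \eqref{(a)} the needed regularity ($\nabla\bm{v}\in L^1$, with products against $H^1$-type quantities) is justified because $\bm{m}_n^\varepsilon$ is smooth, being a finite combination of eigenfunctions.

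Collecting everything, we obtain
\[
\frac{{\rm d}}{{\rm d}t}\|\bm{m}_n^\varepsilon\|_{H^1_{\mathfrak{h}}}^2+\varepsilon\|\Delta^{\mathfrak{h}}\bm{m}_n^\varepsilon\|^2+2\beta\|\mathcal{J}\times\Delta^{\mathfrak{h}}\bm{m}_n^\varepsilon\|^2\le C\big(1+\|\bm{v}\|_{L^3}+\|\nabla\bm{v}\|_{L^\infty}\big)\|\bm{m}_n^\varepsilon\|_{H^1_{\mathfrak{h}}}^2+C_\varepsilon\big(\|\bm{f}\|^2+\|\bm{v}\|_{L^3}^2+\|\bm{m}_0\|^2\big).
\]
Here we also add the $L^2$ bound \eqref{II-energy}. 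In case \eqref{(a)} the $\|\nabla\bm{v}\|_{L^\infty}$ term is absent. All coefficients are integrable in time by the assumptions on $\bm{v}$, and $\|\nabla^{\mathfrak{h}}\mathcal{P}_n\bm{m}_0\|\le\|\nabla^{\mathfrak{h}}\bm{m}_0\|$. Gronwall's inequality then yields \eqref{II_H^1}, with a constant uniform in $\beta\in[0,1]$.
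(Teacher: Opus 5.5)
Your proposal is correct and follows essentially the same route as the paper. You test with $-\Delta^{\mathfrak{h}}\bm{m}_n^\varepsilon$, absorb the $\alpha$-term and the $\bm{v}\times\mathcal{J}(\bm{m}_n^\varepsilon)$-term into the $\varepsilon$-dissipation (this is what makes the constant uniform in $\beta\in[0,1]$), and treat the transport term by integrating by parts: the contraction of $\nabla\bm{v}$ against the symmetric matrix $\partial_i^{\mathfrak{h}}\bm{m}_n^\varepsilon\cdot\partial_k^{\mathfrak{h}}\bm{m}_n^\varepsilon$ is killed under \eqref{(a)} or bounded under \eqref{(c)}, and the remainder is handled by the commutator identity \eqref{eq:commutor_estimate} together with ${\rm div}\,\bm{v}=0$ and $\bm{v}\cdot\bm{n}=0$. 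Your bound of the commutator contribution by $C\|\bm{v}\|_{L^3}\|\bm{m}_n^\varepsilon\|_{H^1}^2$, via $\|\bm{m}_n^\varepsilon\|_{L^6}\le C\|\bm{m}_n^\varepsilon\|_{H^1}$, is the right way to close that step for Gronwall; the paper's final line there trades $\|\bm{m}_n^\varepsilon\|_{L^6}$ for $\|\bm{m}_0\|_{L^2}$, which is not justified as written.
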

\begin{proof}
By choosing  $-\Delta^{\mathfrak{h}} \bm{m}_n^\varepsilon$ as a test function to \eqref{II_projection}, we obtain the following energy estimate:
\begin{equation}\label{II-energy-1}
    \begin{aligned}
\frac{1}{2} \frac{{\rm d}}{{\rm d}t} \|\nabla^{\mathfrak{h}} \bm{m}_n^\varepsilon\|_{L^2(\Omega)}^2
+ 
\varepsilon \|\Delta^{\mathfrak{h}} \bm{m}_n^\varepsilon\|_{L^2(\Omega)}^2 
=&\gamma  \int_{\Omega} 
          \left(
             \bm{v} \cdot \nabla^{\mathfrak{h}} \bm{m}_n^\varepsilon 
            \right)
          \cdot \Delta^{\mathfrak{h}}\bm{m}_n^\varepsilon 
           {\rm d}x+\gamma  \int_{\Omega} 
          \left(\bm{v} \times \mathcal{J}(\bm{m}_n^\varepsilon)\right)
          \cdot \Delta^{\mathfrak{h}}\bm{m}_n^\varepsilon 
          {\rm d}x
\\
&
+\alpha \int_{\Omega}  
\left(\mathcal{J}(\bm{m}_n^\varepsilon)\times   \bm{H}(\bm{m}_n^\varepsilon)\right)\cdot \Delta^{\mathfrak{h}} \bm{m}_n^\varepsilon \:{\rm d} x
\\
&
+\beta\int_\Omega\left(\mathcal{J}(\bm{m}_n^\varepsilon) \times \left(\mathcal{J}(\bm{m}_n^\varepsilon) \times  \bm{H}(\bm{m}_n^\varepsilon)\right)\right)
\cdot \Delta^{\mathfrak{h}} \bm{m}_n^\varepsilon\:{\rm d}x\\
:=&D_1+D_2+D_3+D_4.
\end{aligned}
\end{equation}
The last term $D_4$ can be estimated in the same way as $A_3$ in the proof of Lemma \ref{lemma3.2}. Now, we proceed to estimate the terms $D_1$ to $D_3$. A direct calculation shows 
\begin{align*}
D_2=&\gamma\int_{\Omega} 
(\bm{v} \times \mathcal{J}(\bm{m}_n^\varepsilon) )
\cdot \Delta^{\mathfrak{h}}\bm{m}_n^\varepsilon\mathrm{d} x 
\leq \frac{\varepsilon}{8} 
\|\Delta^{\mathfrak{h}} \bm{m}_n^\varepsilon\|_{L^2(\Omega)}^2 
+ C\frac{\gamma^2}{\varepsilon} \norm{\bm{v}}_{L^3(\Omega)}^2,\\
D_3=&\alpha \int_{\Omega} 
\left(
\mathcal{J}(\bm{m}_n^\varepsilon)\times   \bm{H}(\bm{m}_n^\varepsilon)\right)\cdot \Delta^{\mathfrak{h}} \bm{m}_n^\varepsilon{\rm d} x\\
\leq &\frac{\varepsilon}{8} \|\Delta^{\mathfrak{h}} \bm{m}_n^\varepsilon\|_{L^2(\Omega)}^2
     + \frac{2\alpha^2}{\varepsilon} 
     \left(
     C_{\bm{\pi}}\norm{\bm{m}_n^\varepsilon}_{L^2(\Omega)}^2+\norm{\bm{f} }_{L^2(\Omega)}^2 
     \right).
\end{align*}

Next, we bound the first term $D_1$. For simplicity, we define the $n \times n$ matrix $\mathbb{M}$, whose element in the $i$-th row and $k$-th column is $\partial_i^{\mathfrak{h}} \bm{m}_n^\varepsilon \cdot \partial_k^{\mathfrak{h}}\bm{m}_n^\varepsilon$. Then we obtain
\begin{align*}
	D_1= -\gamma \sum_{i,k=1}^3
	\int_{\Omega} 
	\bm{v}_k 
	\partial_i^{\mathfrak{h}} \partial_k^{\mathfrak{h}} \bm{m}_n^\varepsilon \cdot \partial_i^{\mathfrak{h}} \bm{m}_n^\varepsilon
	{\rm d} x
	-\gamma\int_{\Omega} 
	\nabla \bm{v}\cdot \mathbb{M} 
	{\rm d} x:=\mathfrak{T}_1+ \mathfrak{T}_2.
\end{align*}
Under the condition \eqref{(a)}, $ \nabla \bm{v} $ is anti-symmetric, and thus  $\mathfrak{T}_2=0$ since $ \mathbb{M}$ is symmetric. On the other hand, under the condition \eqref{(c)}, we have
\begin{align*}
\mathfrak{T}_2\leq |\gamma|\norm{\n \bm{v}}_{L^\infty(\Om)}\norm{\n^{\mathfrak{h}} \bm{m}_n^\varepsilon}^2_{L^2(\Om)}.
\end{align*}
For the term $ \mathfrak{T}_1$, we apply the assumptions $\nabla \cdot \bm{v} =0$ and  
$ 
\bm{v}\cdot \bm{n}|_{\partial \Omega \times [0,\infty)}=0
$ to show
\begin{align*}
    \mathfrak{T}_1\overset{\eqref{eq:commutor_estimate}}{=}&
-\gamma
     \int_{\Omega} 
     \sum_{i,k=1}^3
     \left[\bm{v}_k 
     \left(\left(\partial_k^{\mathfrak{h}} \partial_i^{\mathfrak{h}} \bm{m}_n^\varepsilon 
     +\left((\boldsymbol{e}_i \cdot \bm{m}_n^\varepsilon) \boldsymbol{e}_k - (\boldsymbol{e}_k \cdot \bm{m}_n^\varepsilon) \boldsymbol{e}_i \right)\right)\cdot \partial_i^{\mathfrak{h}}\bm{m}_n^\varepsilon\right)
 \right] {\rm d} x
 \\
=&\gamma\int_{\Omega} 
            \nabla \cdot \bm{v} \frac{\left| \nabla^{\mathfrak{h}}\bm{m}_n^\varepsilon \right|^2}{2}{\rm d} x
     - \gamma 
      \int_{\partial \Omega}\bm{v}\cdot \bm{n} \frac{\left| \nabla^{\mathfrak{h}}\bm{m}_n^\varepsilon \right|^2}{2}{\rm d}S
    \\
 & -\gamma
     \int_{\Omega} 
     \sum_{i,k=1}^3 
     \left[ 
            \bm{v}_k\left((\boldsymbol{e}_i \cdot \bm{m}_n^\varepsilon) \boldsymbol{e}_k - (\boldsymbol{e}_k \cdot \bm{m}_n^\varepsilon) \boldsymbol{e}_i \right)\cdot \partial_i^{\mathfrak{h}}\bm{m}_n^\varepsilon
     \right]{\rm d} x
    \\
     \leq& \gamma \| \bm{v}\|_{L^3(\Omega)} 
  \| \nabla^{\mathfrak{h}}\bm{m}_n^\varepsilon\|_{L^2(\Omega)} 
  \| \bm{m}_n^\varepsilon\|_{L^6(\Omega)} 
\\
  \leq &\gamma^2 \| \bm{v}\|^2_{L^3(\Omega)}\| \nabla^{\mathfrak{h}}\bm{m}_n^\varepsilon\|^2_{L^2(\Omega)}+\norm{\bm{m}_0}^2_{L^2(\Om)}
\end{align*}
 Substituting the estimates for $D_1$ to $D_4$ into \eqref{II-energy-1}, we apply Gronwall's inequality to derive the result in \eqref{II_H^1}.
 
 The proof is completed.
\end{proof}

By using the equation \eqref{II_projection}, the estimate \eqref{II_H^1} implies the following bound of $\p_t\boldsymbol{m}_{n}^{\varepsilon}$:
\begin{equation}\label{II-time_derivative}
		\varepsilon\|\partial_{t} \boldsymbol{m}_{n}^{\varepsilon}\|_{L^{2}([0,T]; L^2(\Omega))} \leq \mathfrak{C}_2.
\end{equation}
Therefore, by applying a similar argument as in the proof of Proposition \ref{lemma3.2}, we can show that there exists a subsequence of $\{\boldsymbol{m}_{n}^{\varepsilon}\}$, which converges to a global strong solution $\boldsymbol{m}^\ep$ of \eqref{II-galerkin1}, i.e.,
\begin{equation*}
	\begin{cases}
		\begin{aligned}
			\partial_t \bm{m}^\varepsilon -\varepsilon \Delta^{\mathfrak{h}} \bm{m}^\varepsilon
			&=
			-\gamma \left(\bm{v} \cdot \nabla^{\mathfrak{h}} \bm{m}^\varepsilon + \bm{v} \times \mathcal{J}(\bm{m}^\varepsilon)\right)
			\\
			&\quad 
			-\alpha \mathcal{J}(\bm{m}^\varepsilon) \times  
			\bm{H}(\bm{m}^\varepsilon) 
			- \beta  \mathcal{J}(\bm{m}^\varepsilon) \times
			\left( 
			\mathcal{J}(\bm{m}^\varepsilon) \times    \bm{H}(\bm{m}^\varepsilon) 
			\right),
		\end{aligned}
		& {\rm in } \: \Omega \times [0,\infty),
		\\
		\nabla^{\mathfrak{h}}  \bm{m}^\varepsilon \cdot\bm{n} = 0,  & { \rm on }\:  \partial \Omega \times [0,\infty),  
		\\
		\bm{m}^\varepsilon(\cdot,0)=\bm{m}_{0}, &{\rm in }\: \Omega.
	\end{cases}
\end{equation*}
Moreover, estimates \eqref{II-energy} and \eqref{II-time_derivative} yield the following regularity of $ \bm{m}^\varepsilon$ and $\partial_t \bm{m}^\varepsilon$:
\begin{align*}
    \bm{m}^\varepsilon \in L^\infty([0,T];H^1(\Omega)) \cap L^2([0,T];H^2(\Omega)),
\quad {\rm and }\quad 
\partial_t \bm{m}^\varepsilon \in L^2([0,T];L^2(\Omega))
 \end{align*}
 for any $T<\infty$.

\begin{lemma}\label{II-lemma:max_principle}
For any $T < \infty$, the solution $\bm{m}^\varepsilon$ satisfies $|\bm{m}^\varepsilon| \leq 1$ for almost every $(x,t) \in \Omega \times [0,T]$.
\end{lemma}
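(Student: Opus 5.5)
We will follow the argument of Lemma \ref{I-lemma:max_principle} and study the auxiliary functional
\[
\Phi(t)=\frac12\int_\Omega \big((|\bm{m}^\varepsilon|-1)_+\big)^2\,{\rm d}x .
\]
Since $\bm{m}_0$ takes values in $\mathbb{S}^2$, we have $\Phi(0)=0$. The goal is to show that $\Phi$ is nonincreasing. The regularity $\bm{m}^\varepsilon\in L^2([0,T];H^2(\Omega))$ and $\partial_t\bm{m}^\varepsilon\in L^2([0,T];L^2(\Omega))$ justifies differentiating $\Phi$ in time. This gives
\[
\Phi'(t)=\int_{\{|\bm{m}^\varepsilon|>1\}}\frac{|\bm{m}^\varepsilon|-1}{|\bm{m}^\varepsilon|}\,\bm{m}^\varepsilon\cdot\partial_t\bm{m}^\varepsilon\,{\rm d}x .
\]
We then insert the equation satisfied by $\bm{m}^\varepsilon$ (the limit of \eqref{II_projection}) term by term.

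\textbf{The terms handled exactly as before.} The $\alpha$ and $\beta$ terms contain a factor $\mathcal{J}(\bm{m}^\varepsilon)\times\cdot$, and $\mathcal{J}(\bm{m}^\varepsilon)$ is parallel to $\bm{m}^\varepsilon$. They are therefore orthogonal to $\bm{m}^\varepsilon$ and drop out. The term $-\gamma\,\bm{v}\times\mathcal{J}(\bm{m}^\varepsilon)$ is also orthogonal to $\bm{m}^\varepsilon$, for the same reason. The diffusion term $\varepsilon\Delta^{\mathfrak{h}}\bm{m}^\varepsilon$ is handled exactly as in Lemma \ref{I-lemma:max_principle}. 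We integrate by parts using Proposition \ref{lemma2.1} and the boundary condition $\nabla^{\mathfrak{h}}\bm{m}^\varepsilon\cdot\bm{n}=0$. Here $\frac{(|\bm{m}|-1)_+}{|\bm{m}|}\bm{m}$ is a legitimate $H^1$ test function. Using $\bm{m}\cdot\partial_i^{\mathfrak{h}}\bm{m}=\bm{m}\cdot\partial_i\bm{m}=\frac12\partial_i|\bm{m}|^2$, this contribution is nonpositive.

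\textbf{The transport term, which is the one new ingredient.} It remains to treat $-\gamma\,\bm{v}\cdot\nabla^{\mathfrak{h}}\bm{m}^\varepsilon$. The rotation part $-\bm{e}_i\times\bm{m}$ of the helical derivative is orthogonal to $\bm{m}$, so
\[
\bm{m}^\varepsilon\cdot(\bm{v}\cdot\nabla^{\mathfrak{h}}\bm{m}^\varepsilon)=\tfrac12\,\bm{v}\cdot\nabla|\bm{m}^\varepsilon|^2=|\bm{m}^\varepsilon|\,\bm{v}\cdot\nabla|\bm{m}^\varepsilon| .
\]
The contribution of this term is therefore
\[
-\gamma\int_\Omega (|\bm{m}^\varepsilon|-1)_+\,\bm{v}\cdot\nabla|\bm{m}^\varepsilon|\,{\rm d}x=-\frac{\gamma}{2}\int_\Omega\bm{v}\cdot\nabla\big((|\bm{m}^\varepsilon|-1)_+\big)^2\,{\rm d}x .
\]
After integration by parts this becomes a term with $\operatorname{div}\bm{v}$ plus a boundary term with $\bm{v}\cdot\bm{n}$, and both vanish. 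Under \eqref{(c)}, $\operatorname{div}\bm{v}=0$ is assumed directly. Under \eqref{(a)}, antisymmetry of $\nabla\bm{v}$ forces a zero trace, hence $\operatorname{div}\bm{v}=0$. In both cases $\bm{v}\cdot\bm{n}=0$ on $\partial\Omega$.

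This step is the main obstacle, because $\bm{v}$ has only limited integrability. Justifying the integration by parts requires $\bm{v}\in L^3(\Omega)$ together with $(|\bm{m}^\varepsilon|-1)_+\in H^1\subset L^6$ and $\nabla|\bm{m}^\varepsilon|\in L^2$, which gives an integrable product. We would carry this out for smooth approximations of $\bm{v}$ that preserve divergence-freeness and the boundary condition, and pass to the limit, or else argue by density directly.

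With all terms treated, $\Phi'\le0$. Together with $\Phi(0)=0$ this gives $\Phi\equiv0$, that is, $|\bm{m}^\varepsilon|\le1$ almost everywhere on $\Omega\times[0,T]$.
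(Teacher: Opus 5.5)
Your proposal is correct and follows the paper's proof essentially verbatim. It uses the same functional $\Phi(t)=\frac12\int_\Omega((|\bm{m}^\varepsilon|-1)_+)^2\,{\rm d}x$, the same reduction of the $\varepsilon\Delta^{\mathfrak{h}}$ term to Lemma \ref{I-lemma:max_principle}, and the same rewriting of the transport contribution as a multiple of $\int_\Omega \bm{v}\cdot\nabla((|\bm{m}^\varepsilon|-1)_+)^2\,{\rm d}x=0$ via ${\rm div}\,\bm{v}=0$ and $\bm{v}\cdot\bm{n}=0$. Your extra details fill in points the paper leaves implicit: the orthogonality of $\bm{v}\times\mathcal{J}(\bm{m}^\varepsilon)$ to $\bm{m}^\varepsilon$, the identity ${\rm div}\,\bm{v}=\operatorname{tr}\nabla\bm{v}=0$ under \eqref{(a)}, and the $L^3\cdot L^6\cdot L^2$ integrability. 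Your sign $-\gamma$ on the transport term is also the correct one.
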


\begin{proof}
Following the strategy in the proof of Lemma \ref{I-lemma:max_principle}, we consider the function
\[
\Phi(t) = \frac{1}{2} \int_{\Om} ((|\bm{m}^\varepsilon| - 1)_+)^2 \, {\rm d}x.
\]
Differentiating $\Phi(t)$ with respect to time and substituting the equation for $\partial_t \bm{m}^\varepsilon$ yields
\begin{align*}
    \frac{\mathrm{d} \Phi}{{\rm d}t} 
=\varepsilon \int_{\{|\bm{m}^\varepsilon| > 1\}} 
    \left(\frac{|\bm{m}^\varepsilon| - 1}{|\bm{m}^\varepsilon|}\right) \bm{m}^\varepsilon \cdot\Delta^\mathfrak{h} \bm{m}^\varepsilon 
{\rm d}x
    +  \gamma \int_{\{|\bm{m}^\varepsilon| > 1\}} 
    \left(\frac{|\bm{m}^\varepsilon| - 1}{|\bm{m}^\varepsilon|}\right) \bm{m}^\varepsilon \cdot
    (\bm{v} \cdot \nabla^{\mathfrak{h}} \bm{m}^\varepsilon) {\rm d}x.
\end{align*}
The first term at the right hand side is non-positive, see Lemma \ref{I-lemma:max_principle}. We then analyze the second term:  
\begin{align*}
\int_{\{|\bm{m}^\varepsilon| > 1\}} 
    \left(\frac{|\bm{m}^\varepsilon| - 1}{|\bm{m}^\varepsilon|}\right) \bm{m}^\varepsilon \cdot
    (\bm{v} \cdot \nabla^{\mathfrak{h}} \bm{m}_n^\varepsilon)    {\rm d}x
&= 
    \int_\Omega 
         \left(\frac{|\bm{m}^\varepsilon| - 1}{|\bm{m}^\varepsilon|}\right)_+ 
         \bm{v} \cdot \nabla \frac{| \bm{m}^\varepsilon |^2}{2} {\rm d}x
\\
&= 
    \int_\Omega  (|\bm{m}^\varepsilon| - 1 )_+
     \bm{v} \cdot \nabla | \bm{m}^\varepsilon | {\rm d}x
\\
&=
    \frac{1}{2}\int_\Omega 
     \bm{v} \cdot \nabla ((|\bm{m}^\varepsilon| - 1 )_+)^2 {\rm d}x=0.
\end{align*}

Therefore, $\Phi(t)$ is a decreasing non-negative function. Since $\Phi(0)=0$, we have $\Phi (t) \equiv 0, \forall t >0$, and hence $|\bm{m}^\varepsilon|\leq 1$ a.e.  in $\Omega \times [0,T]$. The proof is completed.  
\end{proof}

Lemma \ref{II-lemma:max_principle} implies $\mathcal{J}(\bm{m}^\varepsilon)  = \bm{m}^\varepsilon$. Consequently, $\bm{m}^\varepsilon$ solves the following  problem 
\begin{equation}\label{II-maximum1}
\begin{cases}
\begin{aligned}
\partial_t \bm{m}^\varepsilon -\varepsilon \Delta^{\mathfrak{h}} \bm{m}^\varepsilon
&=
    -\gamma \left(\bm{v} \cdot \nabla^{\mathfrak{h}} \bm{m}^\varepsilon +\bm{v} \times \bm{m}^\varepsilon\right)
           \\
      &\quad 
 -\alpha \bm{m}^\varepsilon \times  
           \bm{H}(\bm{m}^\varepsilon) 
           - \beta  \bm{m}^\varepsilon\times
      \left( 
           \bm{m}^\varepsilon \times    \bm{H}(\bm{m}^\varepsilon) 
      \right),
\end{aligned}
& {\rm in } \: \Omega \times [0,\infty),
      \\
      \nabla^{\mathfrak{h}}  \bm{m}^\varepsilon \cdot\bm{n} = 0,  & { \rm on }\:  \partial \Omega \times [0,\infty),  
      \\
    \bm{m}^\varepsilon(\cdot,0)=\bm{m}_{0}, &{\rm in }\: \Omega.
\end{cases}
\end{equation}

\medskip
\subsection{Global type-II weak solutions to \eqref{system1} with $\beta>0$}
In this part, we establish a priori uniform estimates for $\bm{m}^\varepsilon$  with respect to $\varepsilon$.  
\begin{lemma}\label{II-uniform_energy_estimate}
Suppose that $\bm{m}_0 \in H^1(\Omega;\mathbb{S}^2)$, $ \bm{v}$ satisfies  the condition \eqref{(a)}, $\bm{f}\in L^2_{loc}([0,\infty);L^2(\Omega))$ and $\bm{\pi}$ satisfies \eqref{Cpi}.
For any $T >0$,  $\bm{m}^\varepsilon$ admits the  estimates
\begin{align}
\sup\limits_{t \in (0,T)} \| \bm{m}^\varepsilon(t)\|_{H_{\mathfrak{h}}^1(\Omega)}^2
+ \beta \int_0^T\int_{\Omega}  
| \bm{m}^\varepsilon\times 
     \Delta^{\mathfrak{h}} \bm{m}^\varepsilon |^2{\rm d} x{\rm d}t
\leq 
 \mathfrak{C}_3,\label{II-3.12}
 \end{align}
 and 
 \begin{align}
 \|\partial_t \bm{m}^\varepsilon\|_{L^2(0,T;H^{-1}(\Omega))} \leq &\mathfrak{C}_3,\label{II-3.13}
 \end{align}
 where the constant $\mathfrak{C}_3$ depends only on $\al,\beta, \gamma, T, \bm{v}, \norm{\bm{f}}_{L^2([0,T]\times \Om)}$ and $\norm{\bm{m}_0}_{H^1(\Om)}$.   
\end{lemma}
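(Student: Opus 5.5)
We test \eqref{II-maximum1} against $-\Delta^{\mathfrak{h}}\bm{m}^\varepsilon$, exactly as in Lemma \ref{II-lemma3.8}. The difference is that every term must now be absorbed without using the $\varepsilon\|\Delta^{\mathfrak{h}}\bm{m}^\varepsilon\|^2$ dissipation, since the constant must not depend on $\varepsilon$. This is legitimate because $\bm{m}^\varepsilon\in L^2([0,T];H^2(\Omega))$ satisfies $\nabla^{\mathfrak{h}}\bm{m}^\varepsilon\cdot\bm{n}=0$, so Proposition \ref{lemma2.1} gives
\[
\frac12\frac{\rm d}{{\rm d}t}\|\nabla^{\mathfrak{h}}\bm{m}^\varepsilon\|_{L^2}^2+\varepsilon\|\Delta^{\mathfrak{h}}\bm{m}^\varepsilon\|_{L^2}^2=D_1+D_2+D_3+D_4,
\]
with the $D_i$ as in \eqref{II-energy-1}, now with $\mathcal{J}(\bm{m}^\varepsilon)=\bm{m}^\varepsilon$ and $|\bm{m}^\varepsilon|\le1$ by Lemma \ref{II-lemma:max_principle}.

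We treat the terms as follows.
\begin{itemize}
\item[(i)] The damping term $D_4$ is handled as $A_3$. It produces $-\frac{3\beta}{4}\|\bm{m}^\varepsilon\times\Delta^{\mathfrak{h}}\bm{m}^\varepsilon\|^2$ plus $C(\|\bm{\pi}(\bm{m}^\varepsilon)\|^2+\|\bm{f}\|^2)$, and the latter is bounded by \eqref{Cpi} together with $|\bm{m}^\varepsilon|\le1$.
\item[(ii)] The precession term $D_3$ satisfies $\bm{m}^\varepsilon\times\Delta^{\mathfrak{h}}\bm{m}^\varepsilon\cdot\Delta^{\mathfrak{h}}\bm{m}^\varepsilon=0$. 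Its remainder is $-\alpha\int(\bm{m}^\varepsilon\times\Delta^{\mathfrak{h}}\bm{m}^\varepsilon)\cdot(\bm{\pi}(\bm{m}^\varepsilon)+\bm{f})$, which we absorb by Young into $\frac{\beta}{8}\|\bm{m}^\varepsilon\times\Delta^{\mathfrak{h}}\bm{m}^\varepsilon\|^2+C\alpha^2\beta^{-1}(\ldots)$, as for $A_2$. This uses $\beta>0$ and is the only place where the constant depends badly on $\beta$.
\item[(iii)] The transport term $D_1$ is handled exactly as in Lemma \ref{II-lemma3.8}. Under \eqref{(a)}, $\mathfrak{T}_2=0$ by antisymmetry of $\nabla\bm{v}$ against the symmetric matrix $\mathbb{M}$. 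For $\mathfrak{T}_1$, we use the commutator identity \eqref{eq:commutor_estimate}, then $\mathrm{div}\,\bm{v}=\operatorname{tr}\nabla\bm{v}=0$ (a consequence of antisymmetry) and $\bm{v}\cdot\bm{n}=0$ to kill the divergence and boundary terms. The remainder is bounded by $\gamma\|\bm{v}\|_{L^3}\|\nabla^{\mathfrak{h}}\bm{m}^\varepsilon\|_{L^2}\|\bm{m}^\varepsilon\|_{L^6}\le C\gamma\|\bm{v}\|_{L^3}\|\bm{m}^\varepsilon\|_{H^1_{\mathfrak{h}}}^2$ via Sobolev and Proposition \ref{proposition2.1}. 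None of this used $\varepsilon$.
\end{itemize}

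The main obstacle is $D_2=\gamma\int(\bm{v}\times\bm{m}^\varepsilon)\cdot\Delta^{\mathfrak{h}}\bm{m}^\varepsilon$. Lemma \ref{II-lemma3.8} absorbed it into $\varepsilon\|\Delta^{\mathfrak{h}}\bm{m}^\varepsilon\|^2$, which is no longer allowed. Instead we integrate by parts with Proposition \ref{lemma2.1}(2); the boundary term vanishes by the chiral condition, leaving $-\gamma\int\nabla^{\mathfrak{h}}(\bm{v}\times\bm{m}^\varepsilon)\cdot\nabla^{\mathfrak{h}}\bm{m}^\varepsilon$. By the Leibniz rule this splits into $\nabla\bm{v}\times\bm{m}^\varepsilon$, $\bm{v}\times\nabla^{\mathfrak{h}}\bm{m}^\varepsilon$ and lower-order rotation terms. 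The second piece vanishes pointwise after summation? No: $(\bm{v}\times\partial_i^{\mathfrak{h}}\bm{m})\cdot\partial_i^{\mathfrak{h}}\bm{m}=0$ indeed. The first piece is bounded by $\|\nabla\bm{v}\|_{L^1}$ times $\|\nabla^{\mathfrak{h}}\bm{m}^\varepsilon\|_{L^\infty}$, which is unavailable. So here we use $\bm{v}\in W^{1,1}$ differently: $\nabla\bm{v}$ is antisymmetric, so it is a rotation matrix field with $\nabla\times\bm{v}$ constant in space (a rigid motion). Hence $\nabla\bm{v}\in L^\infty$ with bound controlled by $\|\bm{v}\|_{W^{1,1}}$, and the term is at most $C\|\bm{v}\|_{W^{1,1}}\|\nabla^{\mathfrak{h}}\bm{m}^\varepsilon\|_{L^2}$. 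The rotation terms are bounded by $C\|\bm{v}\|_{L^3}\|\nabla^{\mathfrak{h}}\bm{m}^\varepsilon\|_{L^2}$. Collecting all terms and applying Gronwall, with $\|\bm{v}\|_{L^3\cap W^{1,1}}\in L^2(0,T)$ and the $L^2$ bound from $|\bm{m}^\varepsilon|\le1$, gives \eqref{II-3.12}.

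For \eqref{II-3.13}, we test the equation with $\bm{\phi}\in H^1_0(\Omega)$. The $\alpha$ term is rewritten via the divergence structure $\bm{m}\times\Delta^{\mathfrak{h}}\bm{m}=\sum_i\partial_i^{\mathfrak{h}}(\bm{m}\times\partial_i^{\mathfrak{h}}\bm{m})$, so it is bounded by $\|\nabla^{\mathfrak{h}}\bm{m}^\varepsilon\|_{L^2}\|\bm{\phi}\|_{H^1}$. The $\varepsilon$ term is treated likewise. The $\beta$ term is $\beta\,\bm{m}\times(\bm{m}\times\Delta^{\mathfrak{h}}\bm{m})$, which lies in $L^2$ by the bound in \eqref{II-3.12}. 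The transport term satisfies $\|\bm{v}\cdot\nabla^{\mathfrak{h}}\bm{m}\|_{L^{6/5}}\le\|\bm{v}\|_{L^3}\|\nabla^{\mathfrak{h}}\bm{m}\|_{L^2}$, and $L^{6/5}\hookrightarrow H^{-1}$. Squaring and integrating in time gives \eqref{II-3.13}.
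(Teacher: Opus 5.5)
Your proposal is correct, and its skeleton matches the paper's proof. You test \eqref{II-maximum1} with $-\Delta^{\mathfrak{h}}\bm{m}^\varepsilon$. You kill $\mathfrak{T}_2$ by antisymmetry of $\nabla\bm{v}$, and the leading part of $\mathfrak{T}_1$ by ${\rm div}\,\bm{v}=0$ and $\bm{v}\cdot\bm{n}=0$. You absorb the $\alpha$- and $\beta$-terms into the damping and apply Gronwall. Finally you get \eqref{II-3.13} by duality, using the divergence structure of $\bm{m}\times\Delta^{\mathfrak{h}}\bm{m}$ and $L^{6/5}\hookrightarrow H^{-1}$.

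The one genuine difference is the term $D_2=\gamma\int_\Omega(\bm{v}\times\bm{m}^\varepsilon)\cdot\Delta^{\mathfrak{h}}\bm{m}^\varepsilon\,{\rm d}x$. The paper refers back to Lemma \ref{lemma3.2}, which suggests it lets the damping absorb this term. By the triple product identity, $D_2=\gamma\int_\Omega\bm{v}\cdot(\bm{m}^\varepsilon\times\Delta^{\mathfrak{h}}\bm{m}^\varepsilon)\,{\rm d}x\le\frac{\beta}{8}\|\bm{m}^\varepsilon\times\Delta^{\mathfrak{h}}\bm{m}^\varepsilon\|_{L^2}^2+\frac{2\gamma^2}{\beta}\|\bm{v}\|_{L^2}^2$. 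This needs no derivative of $\bm{v}$, only $\bm{v}\in L^2_tL^2_x$, but costs a factor $\gamma^2/\beta$. It is consistent with only $\|\bm{v}\|_{L^3}$ appearing in the paper's differential inequality.

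You instead integrate by parts and use a rigidity fact: a field with antisymmetric distributional gradient on a connected domain is an infinitesimal rigid motion $\bm{a}(t)+\bm{\omega}(t)\times x$. Hence $\|\nabla\bm{v}\|_{L^\infty}\le|\Omega|^{-1}\|\bm{v}\|_{W^{1,1}}$. This is valid, and it is essentially the paper's $E_1$ estimate from Section \ref{sec:inf}.

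What each route buys:
\begin{itemize}
\item Your route keeps $D_2$ free of $\beta$. However, $D_3$ still needs $\beta>0$ under \eqref{Cpi} with $\bm{f}\in L^2$, so it does not improve how $\mathfrak{C}_3$ depends on $\beta$.
\item Its real payoff is structural. It shows that \eqref{(a)} forces $\bm{v}(t)$ to be affine, hence $\bm{v}\in L^2_{loc}([0,\infty);L^\infty(\Omega)\cap H^1(\Omega))$. So \eqref{(a)} is in fact a special case of \eqref{(b)}, and of the hypothesis of Theorem \ref{theroem1}.
\item The paper's route is shorter and does not need the rigidity lemma at all.
\end{itemize}

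For rigor, invoke the rigidity before integrating $D_2$ by parts. For a general $W^{1,1}$ field, the product $(\nabla\bm{v}\times\bm{m}^\varepsilon)\cdot\nabla^{\mathfrak{h}}\bm{m}^\varepsilon$ need not be integrable.
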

\begin{proof}
 By multiplying the equation in \eqref{II-maximum1} by $-\Delta^{\mathfrak{h}} \bm{m}^\varepsilon$ and integrating over $\Omega$, we apply similar argument as in the proof of Lemma \ref{lemma3.2} to obtain 
\begin{equation}\label{II-energy-2}
    \begin{aligned}
&\frac{1}{2} \frac{{\rm d}}{{\rm d}t} \|\nabla^{\mathfrak{h}} \bm{m}^\varepsilon\|_{L^2(\Omega)}^2
+\frac{\beta}{2} \int_{\Omega}  |\bm{m}^\varepsilon\times 
     \Delta^{\mathfrak{h}} \bm{m}^\varepsilon |^2{\rm d} x\\
\leq& C\gamma\left( \| \bm{v}\|_{L^3(\Omega)}^2 
+
\|\nabla^{\mathfrak{h}}\bm{m}_n^\varepsilon\|_{L^2(\Omega)}^2 \right) 
     +\left(\beta+ \frac{2\alpha^2}{\beta}\right)\int_{\Omega} |\bm{\pi}( \bm{m}^\varepsilon)|^2 +|\bm{f}|^2 {\rm d}x
\end{aligned}
\end{equation}
which yields \eqref{II-3.12} by Gronwall's inequality. 

Next, our aim is to establish the bound \eqref{II-3.13}. For any test function $ \boldsymbol{\phi} \in L^{2}([0,T]; H^{1}_0(\Omega))$, we derive from the estimate \eqref{II-3.12}  and $\norm{\boldsymbol{m}^{\varepsilon}}_{L^\infty}\leq 1$ that
\begin{align*}
&\int_{0}^{T} \int_{\Omega} \partial_{t} \boldsymbol{m}^{\varepsilon} \cdot \boldsymbol{\phi} \, \mathrm{d}x \, \mathrm{d}t \\
=&-\gamma\int_{0}^{T} \int_{\Omega} 
       \left( \boldsymbol{v} \cdot \nabla^{\mathfrak{h}} \boldsymbol{m}^{\varepsilon} 
       + \boldsymbol{v} \times \bm{m}^\varepsilon
       \right) \cdot \boldsymbol{\phi} \, \mathrm{d}x \, \mathrm{d}t -\varepsilon \int_{0}^{T} \int_{\Omega} \n^{\mathfrak{h}} \boldsymbol{m}^{\varepsilon} \cdot \n^{\mathfrak{h}}\boldsymbol{\phi} \, \mathrm{d}x \, \mathrm{d}t \\ 
&-\int_{0}^{T} \int_{\Omega} (\alpha \bm{m}^\varepsilon \times \Delta^{\mathfrak{h}} \boldsymbol{m}^{\varepsilon} \cdot \boldsymbol{\phi} )\:\mathrm{d}x \, \mathrm{d}t
    -\int_{0}^{T} \int_{\Omega} \beta(\bm{m}^\varepsilon \times (\bm{m}^\varepsilon \times \Delta^{\mathfrak{h}} \boldsymbol{m}^{\varepsilon})) \cdot \boldsymbol{\phi} \, \mathrm{d}x \, \mathrm{d}t\\
& -\int_{0}^{T} \int_{\Omega} (\alpha \bm{m}^\varepsilon \times (\boldsymbol{\pi}(\boldsymbol{m}^{\varepsilon}) + \boldsymbol{f})) \cdot \boldsymbol{\phi}\mathrm{d}x \, \mathrm{d}t -\int_{0}^{T} \int_{\Omega} \beta (\bm{m}^\varepsilon \times (\bm{m}^\varepsilon \times (\boldsymbol{\pi}(\boldsymbol{m}^{\varepsilon}) + \boldsymbol{f}))) \cdot \boldsymbol{\phi} \, \mathrm{d}x \, \mathrm{d}t\\
\leq& \mathfrak{C}_3\|\boldsymbol{\phi}\|_{L^{2}([0,T]; H^{1}_{0}(\Omega))}.
\end{align*}
As a consequence, by taking the supremum over all $ \boldsymbol{\phi} $ with $ \|\boldsymbol{\phi}\|_{L^{2}([0,T]; H^{1}_{0}(\Omega))} = 1 $, we obtain the estimate \eqref{II-3.13}. 

Therefore, the proof is completed.
\end{proof}

Now, we are in the position to establish Theorem \ref{theroem1'}.
\begin{proof}[\textbf{Proof of Theorem \ref{theroem1'}}]
	
The proof is divided into two steps.

\medskip
\noindent\textbf{Step 1: The convergence of $\bm{m}^\varepsilon$.} For any $T < \infty$,  Lemma \ref{II-uniform_energy_estimate} implies that
\[
\|\bm{m}^\varepsilon\|_{L^{\infty}([0,T];H^{1}(\Omega))} + \|\partial_t \bm{m}^\varepsilon\|_{L^{2}([0,T];H^{-1}(\Omega))} \leq C,
\]
for some constant $C$ independent of $\varepsilon$.
Then, there exists  $\bm{m} \in L^{\infty}([0,T];L^{2}(\Omega))$ such that
\begin{align*}
\bm{m}^\varepsilon &\rightharpoonup \bm{m}, \quad \text{weakly* in } L^{\infty}([0,T];H^{1}(\Omega)), \text{ as } \varepsilon \to 0, \\
\nabla^\mathfrak{h}  \bm{m}^\varepsilon &\rightharpoonup \nabla^\mathfrak{h}  \bm{m}, \quad \text{weakly in } L^{2}([0,T];L^{2}(\Omega)), \text{ as } \varepsilon \to 0.
\end{align*}
Let $\mathcal{X} = H^{1}(\Omega)$, $\mathcal{B} =L^{2}(\Omega)$ and  $\mathcal{Y} =  H^{-1}(\Omega)$. Lemma \ref{LAS} implies 
\[
\bm{m}^\varepsilon \to \bm{m}, \quad \text{strongly in } C^{0}([0,T];L^p(\Omega)), \text{ as } \varepsilon \to 0,
\]
for $2\leq p<6$. Since $\bm{v}\in L^2([0,T];L^3(\Om))$ and $|\bm{m}^\varepsilon|\leq 1$, we derive from the above convergence results of $\bm{m}^\varepsilon$ that
\begin{align*}
\bm{v} \cdot \nabla^{\mathfrak{h}} \bm{m}^\varepsilon
+ \bm{v} \times  \bm{m}^\varepsilon&\rightharpoonup\bm{v} \cdot \nabla^{\mathfrak{h}} \bm{m}
+ \bm{v} \times  \bm{m},\quad \text{weakly in } L^{1}([0,T];L^{1}(\Omega)).
\end{align*}

Additionally, due to  
\[
\int_{\Omega} |\bm{m}^\varepsilon|^{2}{\rm d}x + 2\varepsilon \int_{0}^{T} \int_{\Omega} |\nabla^{\mathfrak{h}} \bm{m}^\varepsilon|^{2}{\rm d}x{\rm d}t = \int_{\Omega} |\bm{m}_0|^{2}{\rm d}x,
\]
we take $\varepsilon \to 0$ to obtain
\[
\int_{\Omega} (|\bm{m}|^{2} - 1){\rm d}x = 0.
\]
This implies $|\bm{m}| = 1$ for a.e. $(x,t) \in \Omega \times [0,T]$, since $|\bm{m}|\leq 1$.

\medskip
\noindent\textbf{Step 2: Global existence of type-II weak solution to \eqref{system1}.} For any $\bm{\varphi} \in C^{\infty}(\bar{\Omega} \times [0,T])$, the solution $\bm{m}^\varepsilon$ satisfies
\begin{align}\label{identity-1}
\begin{aligned}
&\quad 
    \int_{0}^{T} \int_{\Omega} \left( \partial_t \bm{m}^\varepsilon\cdot \bm{\varphi} \right) {\rm d}x{\rm d}t  
+\varepsilon \int_{0}^{T} \int_{\Omega} \left( \n^\mathfrak{h} \bm{m}^\varepsilon\cdot \n^\mathfrak{h}\bm{\varphi} \right) {\rm d}x{\rm d}t
\\
&= - \gamma \int_{0}^{T} \int_{\Omega} 
     (\bm{v} \cdot \nabla^{\mathfrak{h}} \bm{m}^\varepsilon + \bm{v} \times  \bm{m}^\varepsilon)
           \cdot \bm{\varphi} {\rm d}x{\rm d}t
           - \alpha \int_{0}^{T} \int_{\Omega} \left( \bm{m}^\varepsilon \times (\bm{\pi}(\bm{m}^\varepsilon)+ \bm{f}) \right)\cdot \bm{\varphi}  {\rm d}x{\rm d}t 
\\
&\quad +\alpha \int_{0}^{T} \int_{\Omega} \left( \bm{m}^\varepsilon \times \n^\mathfrak{h} \bm{m}^\varepsilon\right)\cdot \n^\mathfrak{h}\bm{\varphi}  {\rm d}x{\rm d}t 
- \beta \int_{0}^{T} \int_{\Omega} \left( \bm{m}^\varepsilon \times ( \bm{m}^\varepsilon \times \Delta^\mathfrak{h} \bm{m}^\varepsilon)\right)\cdot \bm{\varphi} {\rm d}x{\rm d}t
\\
&\quad -\beta \int_{0}^{T} \int_{\Omega} \left( \bm{m}^\varepsilon \times ( \bm{m}^\varepsilon \times (\bm{\pi}(\bm{m}^\varepsilon)+ \bm{f}) )\right)\cdot \bm{\varphi} {\rm d}x{\rm d}t.
\end{aligned}
\end{align}

Since $\bm{m}^\varepsilon \to \bm{m}$ strongly in $C^{0}([0,T];L^{2}(\Omega)), \: \bm{m}^\varepsilon \to \bm{m},\: \text{a.e. } (x,t) \in \Omega \times [0,T]$, the left-hand side (LHS) of \eqref{identity-1} satisfies 
\begin{align*}
\text{LHS of } \eqref{identity-1} \to \int_{\Omega} \left( \bm{m}\cdot \bm{\varphi} \right) (T){\rm d}x - \int_{\Omega} \left( \bm{m}_0\cdot \bm{\varphi} \right) (0) {\rm d}x- \int_{0}^{T} \int_{\Omega} \left( \bm{m}\cdot \partial_t \bm{\varphi} \right) {\rm d}x{\rm d}t \quad{\rm as} \:\:\ep\to 0.
\end{align*}

In the following,  we analyze the convergence of terms at the right hand side of \eqref{identity-1}. Using the convergence results for $\bm{m}^\varepsilon$ in Step 1 and then letting $  \varepsilon \to 0$, we obtain 
\begin{align*}
\gamma\int_{0}^{T} \int_{\Omega} 
	(\bm{v} \cdot \nabla^{\mathfrak{h}} \bm{m}^\varepsilon
	+ \bm{v} \times  \bm{m}^\varepsilon)\cdot \bm{\varphi} {\rm d}x{\rm d}t
	\to &
	\:\gamma\int_{0}^{T} \int_{\Omega} 
	(\bm{v} \cdot \nabla^{\mathfrak{h}} \bm{m}  + \bm{v} \times  \bm{m} )\cdot \bm{\varphi} {\rm d}x{\rm d}t,\\
\int_{0}^{T} \int_{\Omega} \left( \bm{m}^\varepsilon \times \n^\mathfrak{h} \bm{m}^\varepsilon\right)\cdot \n^\mathfrak{h}\bm{\varphi}  {\rm d}x{\rm d}t 
\to &\int_{0}^{T} \int_{\Omega} \left( \bm{m} \times \nabla^\mathfrak{h} \bm{m}\right) \cdot \nabla^\mathfrak{h} \bm{\varphi} {\rm d}x{\rm d}t,
\\
     \int_{0}^{T} \int_{\Omega} \left( \bm{m}^\varepsilon \times (\bm{\pi}(\bm{m}^\varepsilon)+ \bm{f}) \right)\cdot \bm{\varphi} {\rm d}x{\rm d}t
\to &  \int_{0}^{T} \int_{\Omega} \left( \bm{m} \times (\bm{\pi}(\bm{m})+ \bm{f}) \right)\cdot \bm{\varphi} {\rm d}x{\rm d}t,
\\
     \int_{0}^{T} \int_{\Omega} \left( \bm{m}^\varepsilon \times ( \bm{m}^\varepsilon \times (\bm{\pi}(\bm{m}^\varepsilon)+ \bm{f}) )\right)\cdot \bm{\varphi} {\rm d}x{\rm d}t
\to & \int_{0}^{T} \int_{\Omega} \left( \bm{m} \times ( \bm{m} \times (\bm{\pi}(\bm{m})+ \bm{f}) )\right)\cdot \bm{\varphi} {\rm d}x{\rm d}t.
\end{align*} 
It remains to show the convergence of the following term:
\begin{align*}
    &\int_{0}^{T} \int_{\Omega} \left(\bm{m}^\varepsilon\times ( \bm{m}^\varepsilon \times \Delta^\mathfrak{h} \bm{m}^\varepsilon)\right) \cdot \bm{\varphi} {\rm d}x{\rm d}t\\ 
 = &\int_{0}^{T} \int_{\Omega} 
        \n^\mathfrak{h} \left( 
            \bm{m} \times \bm{\varphi}\right)
            \cdot
             (\bm{m}^\varepsilon \times \n^\mathfrak{h} \bm{m}^\varepsilon)
             {\rm d}x{\rm d}t 
+ 
             \int_{0}^{T} \int_{\Omega} 
        \left( 
            (\bm{m}- \bm{m}^\varepsilon )\times \bm{\varphi}\right)
            \cdot
             (\bm{m}^\varepsilon \times \Delta^\mathfrak{h} \bm{m}^\varepsilon)
             {\rm d}x{\rm d}t 
    \\
    \to&  \int_{0}^{T} \int_{\Omega} 
    \n^\mathfrak{h} \left( 
    \bm{m} \times \bm{\varphi}\right)
    \cdot
    (\bm{m} \times \n^\mathfrak{h} \bm{m})
    {\rm d}x{\rm d}t 
    \\
=& \int_{0}^{T} \int_{\Omega}  
\nabla^\mathfrak{h} \bm{m} \cdot \nabla^\mathfrak{h} \bm{\varphi}
-  |\nabla^\mathfrak{h}\bm{m}|^2(\bm{\varphi} \cdot  \bm{m}) 
           {\rm d}x{\rm d}t.
\end{align*}
Here we have applied the following estimate
\begin{align*}
\left|\int_{0}^{T} \int_{\Omega} 
\left( 
(\bm{m}- \bm{m}^\varepsilon )\times \bm{\varphi}\right)
\cdot
(\bm{m}^\varepsilon \times \Delta^\mathfrak{h} \bm{m}^\varepsilon)
{\rm d}x{\rm d}t\right|\leq \mathfrak{C}_3\norm{\bm{\varphi}}_{L^\infty}\norm{\bm{m}- \bm{m}^\varepsilon}_{L^2}\to 0 \quad {\rm as}\:\: \ep\to 0.
\end{align*}

Therefore,  we conclude that the limiting map $\bm{m}$ is a type-II weak solution to \eqref{system1}, namely
\begin{align*}
&\quad \int_{\Omega} \left( \bm{m}\cdot \bm{\varphi} \right) (T){\rm d}x - \int_{\Omega} \left( \bm{m}_0\cdot \bm{\varphi} \right)(0) {\rm d}x - \int_{0}^{T} \int_{\Omega} \left( \bm{m}\cdot \partial_t \bm{\varphi} \right) {\rm d}x{\rm d}t \\
=&  -\gamma\int_{0}^{T} \int_{\Omega} 
    \left(
             \bm{v} \cdot \nabla^{\mathfrak{h}} \bm{m}  +\bm{v} \times  \bm{m} 
            \right)\cdot \bm{\varphi} {\rm d}x{\rm d}t+\alpha\int_{0}^{T} \int_{\Omega} \left( \bm{m} \times \nabla^\mathfrak{h}  \bm{m}\cdot \nabla^\mathfrak{h}  \bm{\varphi} \right) {\rm d}x{\rm d}t\\
 &- \beta\int_{0}^{T} \int_{\Omega} \nabla^\mathfrak{h}  \bm{m} \cdot \nabla^{\mathfrak{h}}\bm{\varphi}{\rm d}x{\rm d}t
            + \beta\int_{0}^{T} \int_{\Omega} |\nabla^\mathfrak{h}\bm{m}|^2 \bm{m} \cdot \bm{\varphi}{\rm d}x{\rm d}t
\\
&-\int_{0}^{T} \int_{\Omega}  \left( \alpha\bm{m} \times (\bm{\pi}(\bm{m})+ \bm{f}) +  \beta\left( \bm{m} \times ( \bm{m} \times (\bm{\pi}(\bm{m})+ \bm{f}) )\right)\right)\cdot \bm{\varphi} {\rm d}x{\rm d}t
\end{align*}
for any $\bm{\varphi} \in C^{\infty}(\bar{\Omega} \times [0,T])$ and any $T < \infty$.

The proof is completed. 
\end{proof}

\subsection{Global weak solutions to incompressiable Schr\"odinger flow }\label{sec:inf}
In the last subsection, we prove Theorem \ref{theorem2}. We consider the following approximate system of \eqref{eq:IS}, namely the system \eqref{II-galerkin1} with $\beta=0$:
\begin{equation}\label{II-galerkin2}
	\begin{cases}
\partial_{t} \bm{m} - \varepsilon\Delta^{\mathfrak{h}} \bm{m}
			=-\gamma\left(\bm{v} \cdot \nabla^{\mathfrak{h}} \bm{m} + \bm{v} \times \bm{m}\right)
			-\alpha \mathcal{J}(\bm{m}) \times  
			\bm{H}(\bm{m}),
		&\text{in } \Omega \times [0,\infty),
		\\
		\nabla^{\mathfrak{h}}  \bm{m} \cdot\bm{n} = 0,  & \text{on } \partial \Omega \times [0,\infty),  
		\\
		\bm{m}(\cdot,0) = \bm{m}_{0},   &\text{in } \Omega.
	\end{cases}
\end{equation}
Applying the Galerkin approximation scheme introduced in section~\ref{ss: Ga-es} with $\beta=0$, we obtain the existence of a global solution $\bm{m}^\varepsilon$ to \eqref{II-galerkin2} satisfying
\begin{align*}
	\bm{m}^\varepsilon \in L^\infty([0,T];H^1(\Omega)) \cap L^2([0,T];H^2(\Omega)), \quad 
	\partial_t \bm{m}^\varepsilon \in L^2([0,T];L^2(\Omega)),
\end{align*}
for any $T<\infty$, together with the uniform bound \[\norm{\bm{m}^\varepsilon}_{L^\infty(\Om\times[0,T])}\leq 1.\]

Next, we  derive uniform estimates for $\bm{m}^\varepsilon$ with respect to the parameter $\varepsilon$.
\begin{lemma}
Suppose that $\bm{m}_0 \in H^1(\Omega;\mathbb{S}^2)$, $ \bm{v} $ satisfies the condition \eqref{(b)} or \eqref{(c)}, $\bm{f}\in L^2_{loc}([0,\infty);L^2(\Omega))$ and $\bm{\pi}$ satisfies the bound \eqref{Cpi1}.
Then for any $T<\infty$, the approximate solution $\bm{m}^\varepsilon$ admits the following uniform estimates:
\begin{align}
 \sup_{0 \leq t \leq T} \left(\|\bm{m}^\varepsilon\|_{L^2(\Omega)}^2+ 
    \|\nabla^{\mathfrak{h}}\bm{m}^\varepsilon\|_{L^2(\Omega)}^2
    \right)
 \leq &
    \mathfrak{C}_4,\label{III-energy-4}\\
    \|\partial_t \bm{m}^\varepsilon\|_{L^2([0,T];H^{-1}(\Omega))} 
    \leq  &  \mathfrak{C}_4,\label{III-4.11}
 \end{align}
where the constant $\mathfrak{C}_4$ is independent of $\ep$.
\end{lemma}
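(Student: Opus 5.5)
We obtain the $L^2$ part of \eqref{III-energy-4} from \eqref{II-energy}, which passes to the limit $n\to\infty$: $\|\bm{m}^\varepsilon(t)\|_{L^2}\le\|\bm{m}_0\|_{L^2}$. For the helical gradient we test \eqref{II-galerkin2} (with $\mathcal{J}(\bm{m}^\varepsilon)=\bm{m}^\varepsilon$, since $|\bm{m}^\varepsilon|\le1$) against $-\Delta^{\mathfrak{h}}\bm{m}^\varepsilon$, in the same way as in the proof of Lemma \ref{II-lemma3.8}. The dissipative term $\varepsilon\|\Delta^{\mathfrak{h}}\bm{m}^\varepsilon\|^2$ is good, but it cannot be used to absorb anything if the bound is to be uniform in $\varepsilon$. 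This rules out the crude estimates of $D_2$ and $D_3$ from Lemma \ref{II-lemma3.8}.

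The exchange part of $\alpha$ drops out, because $(\bm{m}^\varepsilon\times\Delta^{\mathfrak{h}}\bm{m}^\varepsilon)\cdot\Delta^{\mathfrak{h}}\bm{m}^\varepsilon=0$. For the lower-order part $\alpha\int(\bm{m}^\varepsilon\times(\bm{\pi}(\bm{m}^\varepsilon)+\bm{f}))\cdot\Delta^{\mathfrak{h}}\bm{m}^\varepsilon$ we integrate by parts with Proposition \ref{lemma2.1}(2). The boundary term vanishes because $\nabla^{\mathfrak{h}}\bm{m}^\varepsilon\cdot\bm{n}=0$. Applying the Leibniz rule, the term $\nabla^{\mathfrak{h}}\bm{m}^\varepsilon\times(\bm{\pi}+\bm{f})\cdot\nabla^{\mathfrak{h}}\bm{m}^\varepsilon$ vanishes identically, which leaves $-\alpha\int(\bm{m}^\varepsilon\times\nabla^{\mathfrak{h}}(\bm{\pi}(\bm{m}^\varepsilon)+\bm{f}))\cdot\nabla^{\mathfrak{h}}\bm{m}^\varepsilon$. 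This is bounded by $C(\|\bm{m}^\varepsilon\|_{H^1}^2+\|\bm{f}\|_{H^1}^2)$, using \eqref{C-pi1}, $\bm{f}\in L^2H^1$, $|\bm{m}^\varepsilon|\le1$ and Proposition \ref{proposition2.1}. This step is where the strengthened hypotheses of Theorem \ref{theorem2} enter.

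The transport term $\gamma\int(\bm{v}\cdot\nabla^{\mathfrak{h}}\bm{m}^\varepsilon)\cdot\Delta^{\mathfrak{h}}\bm{m}^\varepsilon$ is handled exactly as $D_1$: we split it into $\mathfrak{T}_1+\mathfrak{T}_2$. Under \eqref{(b)} or \eqref{(c)} the boundary and divergence contributions in $\mathfrak{T}_1$ vanish. The commutator term \eqref{eq:commutor_estimate} is bounded by $\|\bm{v}\|_{L^3}\|\nabla^{\mathfrak{h}}\bm{m}^\varepsilon\|_{L^2}\|\bm{m}^\varepsilon\|_{L^6}$, and $\|\bm{m}^\varepsilon\|_{L^6}\le C\|\bm{m}^\varepsilon\|_{H^1}$; in case \eqref{(b)} we also use $H^1\subset L^3$. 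The term $\mathfrak{T}_2$ vanishes under \eqref{(b)} by antisymmetry, and under \eqref{(c)} it is bounded by $\|\nabla\bm{v}\|_{L^\infty}\|\nabla^{\mathfrak{h}}\bm{m}^\varepsilon\|^2$. The remaining piece $\gamma\int(\bm{v}\times\bm{m}^\varepsilon)\cdot\Delta^{\mathfrak{h}}\bm{m}^\varepsilon$ is again handled by integrating by parts, since the boundary term vanishes. This gives $-\gamma\int\nabla^{\mathfrak{h}}(\bm{v}\times\bm{m}^\varepsilon)\cdot\nabla^{\mathfrak{h}}\bm{m}^\varepsilon$. Expanding with the Leibniz rule produces $(\nabla\bm{v}\times\bm{m}^\varepsilon)$, which is controlled by $\|\nabla\bm{v}\|_{L^2}$ under \eqref{(b)} or by $\|\nabla\bm{v}\|_{L^\infty}$ under \eqref{(c)} (for the latter one may also use the $L^3$ bound against $\bm{m}^\varepsilon$ bounded), together with $\bm{v}\times\nabla^{\mathfrak{h}}\bm{m}^\varepsilon$ and the $\bm{e}_i\times$ corrections. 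Altogether we reach
\[
\frac{{\rm d}}{{\rm d}t}\|\nabla^{\mathfrak{h}}\bm{m}^\varepsilon\|_{L^2}^2\le a(t)\big(1+\|\nabla^{\mathfrak{h}}\bm{m}^\varepsilon\|_{L^2}^2\big),
\]
with $a\in L^1(0,T)$ independent of $\varepsilon$, and Gronwall's inequality yields \eqref{III-energy-4}. The main obstacle is this transport-cross term, specifically $\int(\bm{v}\times\nabla^{\mathfrak{h}}\bm{m}^\varepsilon)\cdot\nabla^{\mathfrak{h}}\bm{m}^\varepsilon$. Under \eqref{(c)} it is estimated by $\|\bm{v}\|_{L^\infty}$, which in turn is controlled from $\nabla\bm{v}\in L^\infty$ and $\bm{v}\in L^3$ by Sobolev embedding. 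Under \eqref{(b)}, antisymmetry of $\nabla\bm{v}$ forces $\bm{v}$ to be a rigid motion, so $\bm{v}\in L^\infty$ locally in time and the same bound applies.

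Finally, \eqref{III-4.11} follows by duality as in Lemma \ref{II-uniform_energy_estimate}. We test with $\bm{\phi}\in L^2H^1_0$ and write $\bm{m}^\varepsilon\times\Delta^{\mathfrak{h}}\bm{m}^\varepsilon=\operatorname{div}^{\mathfrak{h}}(\bm{m}^\varepsilon\times\nabla^{\mathfrak{h}}\bm{m}^\varepsilon)$. Each term is then bounded by $C\|\bm{\phi}\|_{L^2H^1_0}$ using \eqref{III-energy-4}, $|\bm{m}^\varepsilon|\le1$ and $\bm{v}\in L^2L^3$.
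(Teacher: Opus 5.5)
Your proposal is correct and follows essentially the paper's proof. You test with $-\Delta^{\mathfrak{h}}\bm{m}^\varepsilon$, integrate the $\bm{v}\times\bm{m}^\varepsilon$ and lower-order $\alpha$-terms by parts via the Leibniz rule (like the paper, this actually needs \eqref{C-pi1} and $\bm{f}\in L^2_{loc}([0,\infty);H^1(\Omega))$ from Theorem~\ref{theorem2}, not the hypotheses as stated), split the transport term into $\mathfrak{T}_1+\mathfrak{T}_2$, close with Gronwall, and get the $H^{-1}$ bound by duality. One correction: the term $\sum_i\int_\Omega(\bm{v}\times\partial_i^{\mathfrak{h}}\bm{m}^\varepsilon)\cdot\partial_i^{\mathfrak{h}}\bm{m}^\varepsilon\,{\rm d}x$ that you call the main obstacle vanishes pointwise, being a triple product with a repeated vector, exactly like the $\nabla^{\mathfrak{h}}\bm{m}^\varepsilon\times(\bm{\pi}+\bm{f})$ term. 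That is why the paper drops it silently, and your $L^\infty$ bound on $\bm{v}$ (via Sobolev embedding or the rigid-motion structure) is valid but unnecessary.
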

\begin{proof} 
Without loss of generality, we establish the estimates under the condition \eqref{(b)}: 
$ 	\bm{v} \in  L^2_{loc}([0,\infty); H^1(\Omega)), \: \bm{v}\cdot \bm{n}|_{\partial \Omega \times  [0, \infty)}=0, \:\nabla \bm{v } =-( \nabla \bm{v })^T\,\,\text{in} \,\, \Omega\times [0, \infty),
$
since the same bounds can be obtained under the condition \eqref{(c)}. 

 Testing the equation in \eqref{II-galerkin2} by $-\Delta^{\mathfrak{h}}\bm{m}^\varepsilon$ and integrating over $\Omega$, we obtain
\begin{equation}\label{II-energy-3}
	\begin{aligned}
		&\frac{1}{2} \frac{{\rm d}}{{\rm d}t} \|\nabla^{\mathfrak{h}} \bm{m}^\varepsilon\|_{L^2(\Omega)}^2+\ep\norm{\De^{\mathfrak{h}} \bm{m}^\varepsilon}^2_{L^2(\Omega)}=E_1+E_2+E_3.
	\end{aligned}
\end{equation}
Here the terms $E_1$ to $E_3$ can be estimated as follows
\begin{align*}
  E_1:=&\gamma\int_{\Omega} 
     (\bm{v} \times \bm{m}^\varepsilon)
      \cdot \Delta^{\mathfrak{h}}\bm{m}^\varepsilon\mathrm{d} x \\
    =& - \gamma\int_{\Omega} 
     ( \nabla^{\mathfrak{h}} \bm{v} \times \bm{m}^\varepsilon) \cdot \nabla^{\mathfrak{h}} \bm{m}^\varepsilon \mathrm{d} x \leq 2|\gamma|\left( \|\bm{v}\|_{H^1(\Omega)}^2 +\|\nabla^{\mathfrak{h}}\bm{m}^\varepsilon\|_{L^2(\Omega)}^2 \right),\\
 E_2:=&\gamma  \int_{\Omega}  
          \left(
           \bm{v} \cdot \nabla^{\mathfrak{h}} \bm{m}^\varepsilon 
           \right)
          \cdot \Delta^{\mathfrak{h}}\bm{m}^\varepsilon 
          {\rm d}x\\
 =& -\gamma
     \int_{\Omega} 
     \sum_{i,k=1}^3
     \left[\bm{v}_k 
     \left(
         \partial_i^{\mathfrak{h}} \partial_k^{\mathfrak{h}} \bm{m}^\varepsilon \cdot \partial_i^{\mathfrak{h}} \bm{m}^\varepsilon
      \right) 
       \right] {\rm d} x
      -\gamma\int_{\Omega} 
     \sum_{i,k=1}^3
     \left[
      \partial_i \bm{v}_k \left(\partial_k^{\mathfrak{h}} \bm{m}^\varepsilon\cdot \partial_i^{\mathfrak{h}} \bm{m}^\varepsilon \right) 
 \right] {\rm d} x
 \\
=& -\gamma
     \int_{\Omega} 
     \sum_{i,k=1}^3 
            \left((\boldsymbol{e}_i \cdot \bm{m}^\varepsilon) \boldsymbol{e}_k - (\boldsymbol{e}_k \cdot \bm{m}^\varepsilon) \boldsymbol{e}_i \right)
     \left(     \bm{v}_k\cdot \partial_i^{\mathfrak{h}}\bm{m}^\varepsilon
     \right){\rm d} x\\
 \leq& \frac{\gamma}{2} 
    \left( \|\bm{v}\|_{H^1(\Omega)}^2 +\|\nabla^{\mathfrak{h}}\bm{m}^\varepsilon\|_{L^2(\Omega)}^2 \right),
\end{align*}
where we have applied the condition \eqref{(b)} in the above estimate of $E_2$, and
\begin{align*}
E_3:=&\alpha \int_{\Omega}  
\left(\bm{m}^\varepsilon\times   \bm{H}(\bm{m}^\varepsilon)\right)\cdot \Delta^{\mathfrak{h}} \bm{m}^\varepsilon {\rm d} x\\
=&
  -\alpha \int_{\Omega}  
\left(
\nabla^{\mathfrak{h}}\bm{m}^\varepsilon\times  (\bm{\pi}(\bm{m}^\varepsilon)+ \bm{f})\right)
+\bm{m}^\varepsilon\times   \nabla^{\mathfrak{h}}(\bm{\pi}(\bm{m}^\varepsilon)+ \bm{f})
\cdot \nabla^{\mathfrak{h}} \bm{m}^\varepsilon {\rm d} x
\\
\leq&\alpha
\left(\|\nabla^{\mathfrak{h}}\bm{m}^\varepsilon\|_{L^2(\Omega)}^2 
+ C_{\bm{\pi}} \|\bm{m}^\varepsilon\|_{H^1_{\mathfrak{h}}(\Omega)}^2 
+ \|\bm{f}\|_{H^{1}(\Omega)}^2 
\right).
\end{align*}
Combining these estimates in \eqref{II-energy-3} and applying Gronwall’s inequality yields \eqref{III-energy-4}.

The bound \eqref{III-4.11} follows by an argument analogous to that used in the proof of \eqref{II-3.13}. This completes the proof. 
\end{proof}

\begin{proof}[\textbf{The proof of Theorem~\ref{theorem2}}]
With the uniform estimates \eqref{III-energy-4} and \eqref{III-4.11}, we follow the same compactness and limiting arguments as in the proof of Theorem~\ref{theroem1'}, along with $\beta=0$, to conclude the proof of Theorem~\ref{theorem2}. Here, we omit the details.
\end{proof}
\appendix
\section{Equivalent estimates of Sobolev norms with helical derivatives}\label{appendix}
This appendix provides detailed proofs of the properties of the Sobolev space associated with helical derivatives, namely Propositions~\ref{proposition2.1}–\ref{proposition2}, which are stated in Section~\ref{sec:preliminaries}.
\begin{proof}[\textbf{The proof of Proposition~~\ref{proposition2.1}}]
For any $\boldsymbol{u}\in H^2(\Om)$, a direct computation yields
    \begin{align*}
   \int_{\Omega} |\nabla^{\mathfrak{h}} \boldsymbol{u}|^{2} \mathrm{d} x&= \sum_{i=1}^3\int_{\Omega}|\p_i\boldsymbol{u}-\bm{e}_i\times  \boldsymbol{u}|^{2} \mathrm{d} x
         \leq C \norm{ \bm{u} }^2_{H^1(\Omega)}.
    \end{align*}
   Consequently, we also have
    \begin{align*}
         \int_{\Omega} |\nabla\boldsymbol{u}|^{2} \mathrm{d} x
         \leq C  \int_{\Omega} \left( |\nabla^{\mathfrak{h}} \boldsymbol{u}|^2 + | \boldsymbol{u}|^2\right)\mathrm{d} x 
         \leq  C\norm{ \bm{u} }_{H^1_{\mathfrak{h}}(\Omega)}^2
    \end{align*}
which implies the estimate \eqref{equiv-1} with $k=1$.
    
 We next consider the case of $k=2$. By the definition of the covariant derivatives $\partial_i^{\mathfrak{h}}$, we compute
    \begin{align*}
        \sum_{i,j=1}^{3}  \int_{\Omega} \left|  \partial_{j}^\mathfrak{h} \partial_{i}^\mathfrak{h} \boldsymbol{u} \right|^2 \mathrm{d} x
       =
         \sum_{i,j=1}^{3}  \int_{\Omega} \left|\partial_{ji}\boldsymbol{u} - \bm{e}_i \times \partial_j\bm{u} - \bm{e}_j \times \partial_i \bm{u} + \bm{e}_j \times (\bm{e}_i  \times \bm{u}) \right|^2 \mathrm{d} x
        \leq  C  \norm{ \bm{u} }^2_{H^2(\Omega)}.
    \end{align*}
  Conversely, by a similar argument, we obtain 
    \begin{align*}
         \sum_{i,j=1}^{3}  \int_{\Omega} \left|  \partial_{j}\partial_{i}\boldsymbol{u} \right|^2 \mathrm{d} x
         \leq 
         C  \norm{ \bm{u} }^2_{H_\mathfrak{h} ^2(\Omega)}.
    \end{align*}
    
 Combining the above estimates, we conclude that the norms induced by $\nabla$ and $\nabla^{\mathfrak{h}}$ are equivalent up to order two. The reverse inequality can be proved in a similar manner, and we omit the details.
\end{proof}

In the end of this appendix, we give the proof of Proposition \ref{proposition2}.
\begin{proof}[\textbf{The proof of Proposition \ref{proposition2}}.]
We first observe that the helical derivative acting on a vector field $\bm{u}$ can be written as
\begin{equation}\label{A.2.1}
    \partial_i^{\mathfrak{h}} \bm{u} = \partial_i \bm{u} - M_i \bm{u},
\end{equation}
where $M_i, \:i=1,2,3,$ is skew-symmetric matrix given explicitly by
\begin{align*}
M_1=\begin{pmatrix}
0& 0 & 0 \\
 0&0  &  -1\\
 0& 1 & 0
\end{pmatrix},\quad
M_2=\begin{pmatrix}
0 &0  &  1\\
 0&0  & 0\\
 -1 & 0 & 0
\end{pmatrix},\quad
M_3=\begin{pmatrix}
 0& -1 &0  \\
 1&  0& 0\\
  0& 0 & 0
\end{pmatrix}.
\end{align*}

As a consequence, the helical Laplacian $\Delta^{\mathfrak{h}}$ admits the following expansion:
\begin{align}\label{eq:helical_laplacian_expanded}
\Delta^{\mathfrak{h}} \bm{u} 
=\Delta \bm{u} - 2\sum_{i=1}^3 M_i \partial_i \bm{u} - \bm{u}.
\end{align}
Next, we rewrite the chiral boundary condition $\nabla^{\mathfrak{h}} \bm{u} \cdot \bm{n} = 0$, where $\bm{n} = (\bm{n}_1, \bm{n}_2, \bm{n}_3)$ denotes the unit outward normal vector of $\p\Om$. Using \eqref{A.2.1}, this condition is equivalent to
\begin{equation}
\nabla \bm{u} \cdot \bm{n}=\bm{n}_i M_i\bm{u} \quad \text{on } \partial\Omega. \label{eq:bc_rewritten}
\end{equation}

Let $\bm{u}\in H^{2}(\Omega)$ satisfies the chiral boundary condition
$\nabla^{\mathfrak{h}} \bm{u} \cdot \bm{n} = 0$.
By the Agmon-Douglis-Nirenberg regularity theorem
(see \cite{MR125307} or \cite[Theorem~3.2]{MR2030823}),
there exists a constant $C>0$ such that
\begin{align}\label{ADs}
	\norm{\bm{u}}_{H^2(\Omega)} \leq 
	C \left( \norm{\Delta \bm{u}}_{L^2 (\Omega)}  
	+\norm{ \nabla \bm{u} \cdot \bm{n}}_{H_{\partial}^{1}(\Omega)} 
	+\norm{\bm{u}}_{H^1(\Omega)}\right),
\end{align}
where the boundary Sobolev norm is defined by
\begin{align*}
\| \nabla\bm{u} \cdot \bm{n}\|_{H_{\partial}^{1}(\Omega) }:= \inf \bigl\{\|G\|_{H^{1}(\Omega)} \mid G \in H^{1}(\Omega), G|_{\partial \Omega} =  \nabla \bm{u} \cdot \bm{n}\bigr\}.
\end{align*}
Using \eqref{eq:helical_laplacian_expanded} and \eqref{eq:bc_rewritten}, we have the following estimates: 
\begin{align*}
\norm{\Delta \bm{u}}_{L^2 (\Omega)}  
    &\leq 
   C \left( 
   \norm{\Delta^\mathfrak{h} \bm{u}}_{L^2 (\Omega)} 
+ \norm{\bm{u}}_{H^1(\Omega)}
   \right),
\\  
 \norm{ \nabla \bm{u} \cdot \bm{n}}_{H_{\partial}^{1}(\Omega) }
    &= \norm{\bm{n} \times \bm{u}}_{H_{\partial}^{1}(\Omega) }
   \leq C \norm{\bm{u}}_{H^{1}(\Omega)}.
   \end{align*}
Therefore, for any $\bm{u}\in H^2(\Om)$ with the chiral boundary condition $\nabla^{\mathfrak{h}} \bm{u} \cdot \bm{n} = 0$, we derive from \eqref{ADs} that 
\begin{align*}
\norm{\bm{u}}_{H^2{(\Omega)}}\leq& C 
	\left(
	\norm{\Delta^{\mathfrak{h}}\bm{u}}_{L^2(\Omega)} + 
	\norm{\bm{u}}_{H^1(\Omega)}
	\right).
\end{align*}

Finally, by using  Sobolev interpolation  and Young's inequality, we control the $H^{1}$-norm as follows:
   \begin{align*}
    \norm{\bm{u}}_{H^{1}(\Omega)} 
    \leq C  \norm{\bm{u}}_{H^{2}(\Omega)}^\frac{1}{2}  \norm{\bm{u}}_{L^{2}(\Omega)}^\frac{1}{2}
    \leq  \frac{1}{2} \norm{\bm{u}}_{H^{2}(\Omega)} +C \norm{\bm{u}}_{L^{2}(\Omega)},  
\end{align*}
which  yields
\[\norm{\bm{u}}_{H^2{(\Omega)}}\leq C 
\left(
\norm{\Delta^{\mathfrak{h}}\bm{u}}_{L^2(\Omega)} + 
\norm{\bm{u}}_{L^2(\Omega)}\right).\]

Therefore, the proof is completed. 
\end{proof}
%
%
%

\section*{Acknowledgments}
The author B. Chen is supported partially by NSFC (Grant No. 12301074) and Guangdong Basic and Applied Basic Research Foundation (Grant No. 2025 A1515010502), and the author Z. Qiu is supported by NSFC No. 12171109 and No. 12201140, and the Innovation Research for the Postgraduate of Guangzhou University, No. JCCX2024-013.

\medskip
\section*{Statements and Declarations}
\noindent {\it\bf{Conflict of interest}}: The authors declare that they have no conflict of interest.

\medskip
\noindent {\it\bf{Data Availability}}: Data sharing is not applicable to this article as no datasets were generated or analyzed during the study.



\end{sloppypar}
\end{document}